\newtheorem{thm}{Theorem}[section] 
\newtheorem{lemma}[thm]{Lemma} 
\newtheorem{prop}[thm]{Proposition} 
\newtheorem{conj}[thm]{Conjecture} 
\theoremstyle{definition}
\newtheorem{defn}[thm]{Definition} 
\renewcommand{\author}[1]{\large\rm #1\\ \bigskip}
\newcommand{\address}[1]{{\normalsize\it #1\\}\bigskip}
\renewcommand{\title}[1]{\bigskip\bigskip\Large\bf #1\bigskip\bigskip\\}
\begin{document}

\begin{flushright}
\end{flushright}

\vglue .3 cm

\begin{center}

\title{Permutations sortable by deques and by  two stacks in parallel.}
\author{Andrew Elvey Price\footnote[1]{email: andrewelveyprice@gmail.com} and  Anthony J. Guttmann\footnote[2]{email:
                {\tt guttmann@unimelb.edu.au}}}
\address{ ARC Centre of Excellence for\\
Mathematics and Statistics of Complex Systems,\\
School of Mathematics and Statistics,\\
The University of Melbourne, Victoria 3010, Australia}

\end{center}
\setcounter{footnote}{0}

\begin{abstract}

Recently Albert and Bousquet-M\'elou \cite{AB15} obtained the solution to the long-standing problem of the number of permutations sortable by two stacks in parallel (tsip). Their solution was expressed in terms of functional equations. We show that the equally long-standing problem of the number of permutations sortable by a double-ended queue (deque) can be simply related to the solution of the same functional equations. Subject to plausible, but unproved, conditions, the radius of convergence of both generating functions is the same. Numerical work confirms this conjecture to 10 significant digits.
Further numerical work suggests that the coefficients of the deque generating function behave as $\kappa_d \cdot \mu^n \cdot n^{-3/2},$ where $\mu = 8.281402207\ldots,$ while the coefficients of the corresponding tsip generating function behave as $\kappa_p \cdot \mu^n \cdot n^{\gamma}$  with $\gamma \approx -2.473.$ The constants $\kappa_d$ and $\kappa_p$ are also estimated.

{\em Inter alia,}
 we study the asymptotics of quarter-plane loops, starting and ending at the origin, with weight $a$ given to north-west and east-south turns. The critical point varies continuously with $a,$ while the corresponding exponent variation is found to be continuous and monotonic for $a > -1/2,$ but discontinuous at $a=-1/2.$ 
\end{abstract}

  {


\vskip .2cm


\vskip .3cm

\section{Introduction}
\label{introduction}
The problem of {\em pattern-avoiding permutations} appears to have been first considered by MacMahon \cite{MM}. However Knuth \cite{K68} was the first to consider a number of classic data structures from the point of view of the permutations they could produce from the identity permutation (or, equinumerously, which permutations could produce the identity permutation). For the data structures considered - stacks and input-restricted deques - Knuth showed that of the $n!$ possible input permutations of length $n$ only $C_n \sim const \cdot 4^n \cdot n^{-3/2}$ and $S_{n} \sim const \cdot (3+2\sqrt{2})^{n} \cdot (n)^{-3/2}
$ could be sorted by stacks and input-restricted deques, respectively. This is a consequence of the fact that only 231-avoiding permutations are stack-sortable, as we discuss below.

Shortly thereafter, a number of authors, notably Evan and Itai \cite{EI71}, Pratt \cite{P73} and Tarjan \cite{T72} considered more general data structures. Foremost among these were two stacks in parallel, two stacks in series and deques. A deque, illustrated in Fig \ref{fig:deque}, is a double-ended queue, with insertions and deletions allowed at either end.  Until recently \cite{AB15}, none of these had been solved. As remarked above, simple stacks can sort any permutation that does not contain three successive (but not necessarily consecutive) elements in the order 231. We write this as the class $Av(231),$ that is, the class of $231$-avoiding permutations. For example 1573642 can't be sorted by a stack as the elements 562 (among other sub-sequences) are in the forbidden relative order. Input-restricted deques are describable by the class $Av(4231,\, 3241),$ whereas Pratt \cite{P73} showed that deques (without input or output restrictions) cannot be described similarly, as an infinite number of patterns would be needed in such a description.

To establish a notation, let $p_n$ denote the number of permutations of length $n$ that can be produced by two parallel stacks, let $d_n$ be the corresponding quantity for deques, and let $s_n$ be the corresponding quantity for two stacks in series. We name the corresponding generating functions $$P(t)=\sum p_n t^n,\,\; D(t)=\sum d_n t^n,\,\; {\rm and} \,\,\,S(t)=\sum s_n t^n.$$

In 2010, Albert, Atkinson and Linton \cite{AAL10} studied these problems with a view to establishing upper- and lower-bounds to the relevant growth constants. For deques they found $7.890 < \mu_d < 8.352$ and for tsips they found $7.535 < \mu_s < 8.3461,$ and commented that the actual growth constants may be equal, and may possibly be equal to exactly 8. As we show, the two growth constants do indeed appear to be equal, but to a slightly higher value, 8.28140...

In 2015 Albert and Bousquet-M\'elou \cite{AB15} found two coupled functional equations that give the generating function  $P(t).$ Unfortunately their representation does not allow for a single equation for the generating function, nor does it allow the asymptotics of $p_n$ to be obtained. However it does offer, in principle, a polynomial-time algorithm to obtain the coefficients $p_n.$ Other unanswered questions include the nature of the solution. Is it D-finite, or differentially algebraic? The answer to these questions is not known.

In their solution, Albert and Bousquet-M\'elou first encoded the operations involved in sorting a permutation as words over the alphabet $I_1,\,\,I_2,\,\,O_1,\,\,O_2,$ representing the input to, or output from, stack number 1 or stack number 2 respectively. Any sorting of a permutation can be effected by an operation sequence comprising a word in this alphabet, subject to certain constraints.
They then pointed out that such words could be considered from two other points of view. The first is a mapping to quarter-plane random walks that return to the origin. More precisely, these are random walks in ${\mathbb N} \times {\mathbb N}$ that start and end at the origin. Making the identification $I_1 \equiv N,$ $I_2 \equiv E,$ $O_1 \equiv S,$ $O_2 \equiv W,$ where $N,\,\,E,\,\,S,\,\,W,$ denote steps to the north, east, south and west respectively. Then operation sequences on words map to loops that return to the origin. The number of such  loops of length $2n$ is given by $C_n C_{n+1},$ where $C_n = \frac{1}{n+1} {2n \choose n}$ is the $n$th Catalan number.
However this is not a bijection as a given permutation may correspond to more than one loop.

An alternative representation was given in terms of two-coloured arches, in which the operation sequences were encoded as arches drawn between numbered points on a line, with arches above the line being of a different colour than those below the line. For further details of these connections the reader is referred to \cite{AB15}.

In this work our principal result is that the generating function for deques can be simply related to the generating function for two stacks in parallel. We therefore provide a comparable solution to the deque problem to that given in \cite{AB15} for the problem of two stacks in parallel in Theorem \ref{bigthm}, which states that the deque and tsip generating functions $D$ and $P,$ defined above, satisfy the following two (equivalent) equations:
\begin{equation}\label{SinRa}P(t)=\frac{(D(t)-1)\cdot (D(t)-t-1)}{2t\cdot (D(t)-1-t \cdot D(t))}\end{equation}
and
\begin{equation}\label{RinSa}2D(t)=2+t+ 2t P(t)-2t^2 P(t) -t\sqrt{1-4P(t)+4P(t)^2-8t P(t)^2+4t^2 P^2(t)-4t P(t)}.\end{equation}

 Subject to the veracity of conjectures 10, 11 and 12 in \cite{AB15}, we prove that the radii of convergence of the two generating functions $P(t)$ and $D(t)$ are the same. Further, with one extra assumption, in Theorem \ref{mediumthm} we prove that  $D(t)= D(t_c) + k_{D}\sqrt{t-t_{c}}+ o(\sqrt{t-t_{c}})$ for some constant $k_{D}\in\mathbb{R}$.

For both these problems we also give a detailed (numerical) study of the asymptotics, based on a 500 term expansion of the generating functions. In this way we estimate the radius of convergence to 10 significant digits for both problems, and find agreement at that level of precision, thus strengthening our confidence in the aforementioned conjecture that the critical points are identical. We also find the leading and next-to-leading critical exponents, which, perhaps surprisingly, are different for the two problems. For two stacks in parallel we find $$P(t) = P(t_c)-t_cP'(t_c)(1-t/t_c)+P_1(t_c)(1-t/t_c)^{\gamma_1} + P_2(t_c)(1-t/t_c)^{\gamma_2}  + o((1-t/t_c)^{\gamma_2}) $$
where $P_1(t_c), \,\, P_2(t_c)$ etc. are constants that occur as the lowest order terms in the expansion of implicitly defined functions $P_1(t), \,\, P_2(t)$ etc. around $t=t_c.$ 
We estimate
$t_c \approx 0.1207524976,$   
$\gamma_1\approx 1.473,$ and $ \gamma_2 \approx 1.946.$ 
Furthermore $$P(t_c)=\frac{1}{2(1-\sqrt{t_c})^2}.$$

For deques we find
$$D(t)  = D(t_c)+D_1(t_c)(1-t/t_c)^{1/2} + D_2(t_c)(1-t/t_c)^{\gamma_3}  +o(1-t/t_c)^{\gamma_3}, $$
where $ D_1(t_c), \,\, D_2(t_c)$ etc. are constants that occur as the lowest order terms in the expansion of implicitly defined functions $D_1(t), \,\, D_2(t)$ etc. around $t=t_c.$. We estimate
 $t_c \approx 0.1207524977,$ and $\gamma_3 \approx 0.973.$ Furthermore $$D(t_c)=\frac{1+t_c^{3/2}}{1-t_c},$$ and
\begin{equation}
D_1(t_c)=-2^{3/4}\cdot t_c^{3/2}\sqrt{P(t_c)^{3/2}+\sqrt{P(t_c)\cdot t_c}(1-\sqrt{t_c})\cdot P'(t_c)}.
\end{equation}

We have been unable to obtain a solution to the more difficult problem of the number of permutations of length $n$ sortable by two stacks in series, as shown in Fig. \ref{fig:2sis}, but have obtained the most extensive enumerations to date of the coefficients $s_n$ of the generating function $S(t),$ having obtained these exactly for $n < 20$ and approximately for longer permutations. An analysis of the asymptotics based on these enumerations is given in \cite{EG15}.

\begin{figure}[htbp]
   \centering
   \includegraphics[width=3.3in]{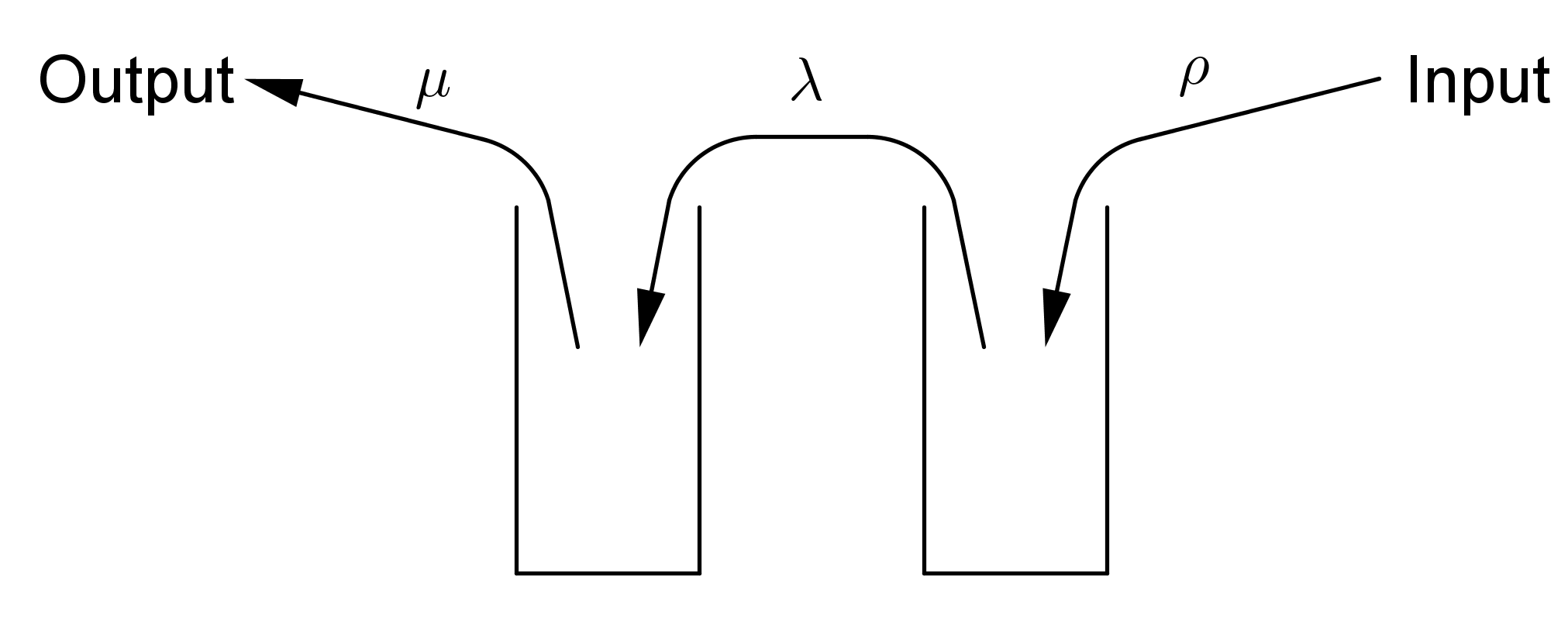} 
   \caption{Allowed input and output sequences for two stacks in series.}
   \label{fig:2sis}
\end{figure}

The associated problem of weighted quarter-plane loops also displays some unusual and hence interesting features for a simple lattice model. Recall that the loops can take steps in the $\pm x$ and $\pm y$ directions, and start and end at the origin $(0,0)$, and that all vertices must have non-negative co-ordinates. There is, in addition, a weight, or fugacity $a$ associated with NW and ES corners, so the generating function can be written $$Q(u,a) = \sum q_{n,m} u^n\cdot a^m$$ where $q_{n,m}$ is the number of such loops of $n$ steps with $m$ weighted corners. We are interested in the asymptotic behaviour of the coefficients, which is expected to be
$$[u^n]Q(u,a) \sim const \cdot q_c(a)^{-n} \cdot n^{g(a)}.$$
 As shown by Albert and Bousquet-M\'elou \cite{AB15},  not only does the critical point $q_c(a)$ vary with $a,$ which is to be expected, but so does the critical exponent $g(a),$ which is unexpected. In particular, this does not happen for similarly weighted loops in the half-plane or full-plane. Based on our numerical studies, we conjectured the variation of $q_c(a)$ with $a,$ providing numerical confirmation for the conjectured dependence in \cite{AB15}. We also studied the critical exponents, and conjectured these for various values of $a,$ but were not able to guess the $a$ dependence. In private correspondence, Kilian Raschel has given us his conjectured value for the exponents which is  
$$g(a) = \frac{\pi}{\arccos \left ( \frac{a-1}{a+1+\sqrt{2+a}} \right )}, \,\, {\rm for} \,\, a \ge 0.$$
This agrees with our numerical results, not only for $a \ge 0,$ but more broadly for $a > -1/2.$ However for $a=-1/2,$ we find the exponent takes the value $3/4,$ rather than $1$ as given by the above formula. Thus we have the additional interesting feature of a discontinuous critical exponent arising in this simple lattice walk model.

In the next section we establish our notation, define certain operations and prove a number of lemmas and propositions necessary for the proof of the main theorem. In the following section we derive the generating functions for various quantities, such as bi-coloured Dyck paths, weighted loops as defined above, and for a special class of sequences, defined below. Putting these definitions together leads us to a functional equation connecting $D(t)$ and $P(t),$ from which we obtain our principal result Theorem \ref{bigthm}. In section 4 we analyse this functional equation, and, subject to a conjecture in \cite{AB15}, prove Theorem \ref{mediumthm}, mentioned above. In section 5 we first discuss the asymptotics of weighted quarter-plane loops, then that of deques and two stacks in parallel. It is only through a remarkable cancellation that these two generating functions have different critical exponents. For deques we conjecture a square-root singularity, while for tsips the exponent arising in the generating function is approximately 1.473, which does not suggest any obvious rational number with low denominator. In a wild speculation we give a possible exact value for this exponent. The final section gives our conclusions.

\section{Notation and canonical operation sequences}

Consider the operations $I_{1},I_{2},O_{1},O_{2}$, where $I_{1}$ and $I_{2}$ represent input to the top and bottom of the deque, respectively, and $O_{1}$ and $O_{2}$ represent output from the top and bottom of the deque, respectively, as shown in Fig {\ref{fig:deque}. We will call a permutation {\em deque-achievable} if it can be produced by a deque, and we will call it  {\em tsip-achievable} if it can be produced by two stacks in parallel. Each achievable permutation can be encoded as a word over the alphabet $\{I_{1},I_{2},O_{1},O_{2}\}$, such that the total number of $I$s is the same as the total number of $O$'s, and any prefix contains at least as many $I$'s as $O$s. We call such a word an \emph{operation sequence}. For example, the permutation 4123 is achievable as it is produced by the operation sequence $I_{1}I_{1}I_{1}I_{2}O_{2}O_{2}O_{2}O_{2}$ (see Fig \ref{fig:deque4123}). This permutation is not, however, tsip-achievable.
\begin{figure}[htbp]
   \centering
   \includegraphics[width=3.3in]{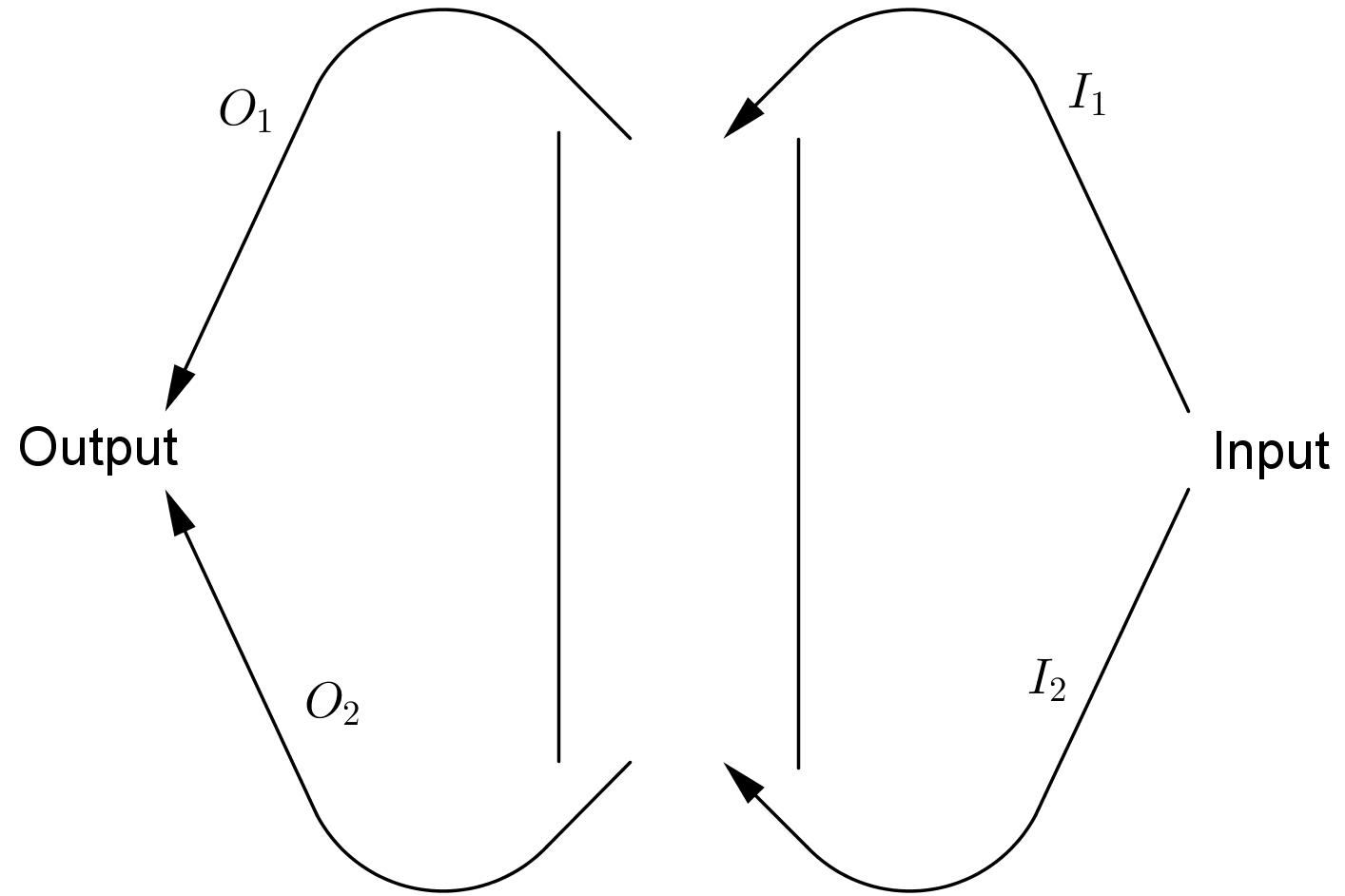} 
   \caption{The input and output operations $I_{1}$, $I_{2}$, $O_{1}$ and $O_{2}$ on a deque.}
   \label{fig:deque}
\end{figure}

\begin{figure}[htbp]
   \centering
   \includegraphics[width=4.3in]{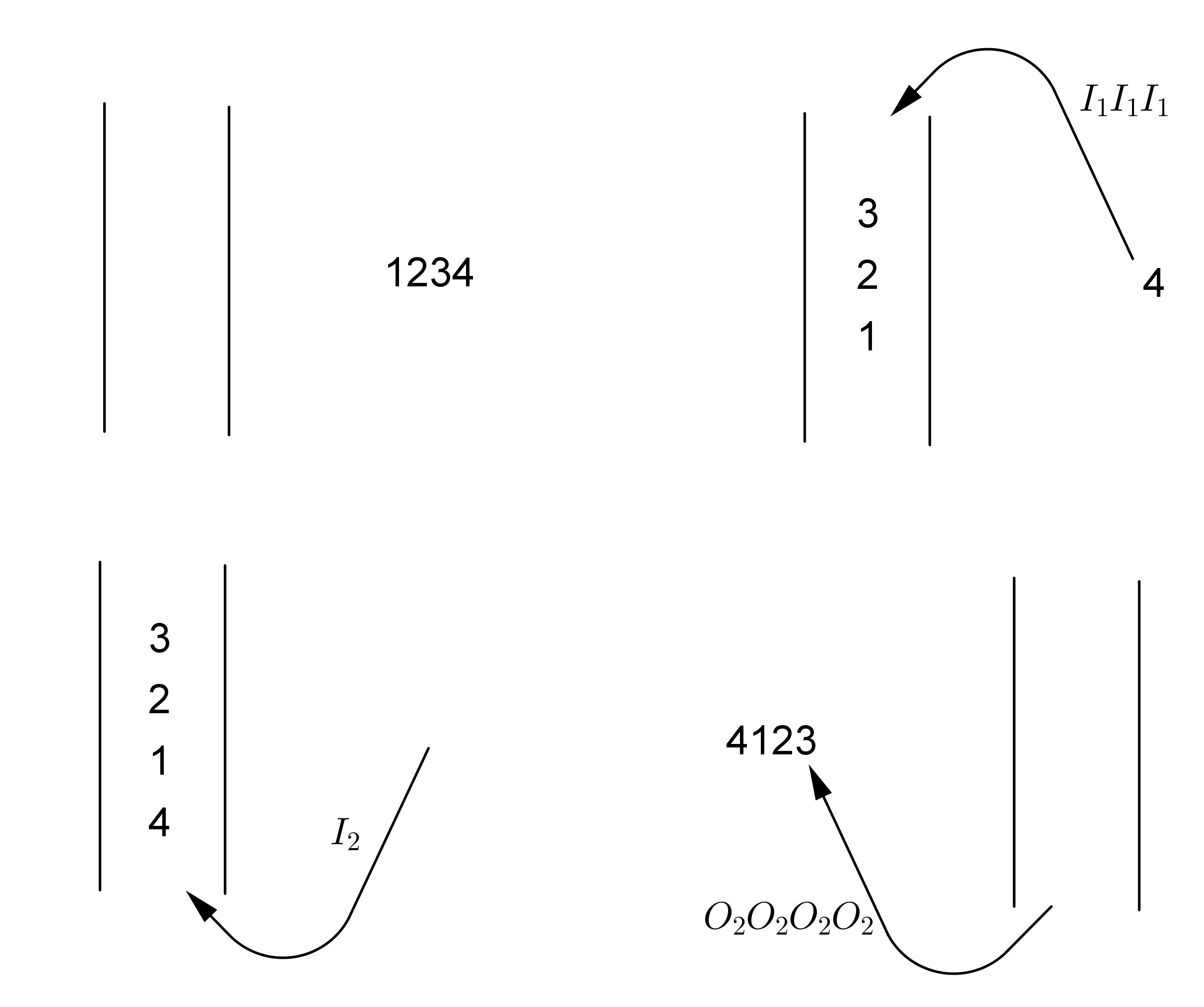} 
   \caption{The sorting procedure $I_{1}I_{1}I_{1}I_{2}O_{2}O_{2}O_{2}O_{2}$ produces the permutation 4123. This is the shortest permutation which cannot be produced by two stacks in series.}
   \label{fig:deque4123}
\end{figure}

We call an operation sequence $w$ a tsip word if it has the additional properties that for $j=1,2$ the total number of $I_{j}$'s is equal to the total number of $O_{j}$'s, and each prefix contains at least as many $I_{j}$'s as $O_{j}$'s. In this case, the operation sequence $w$ corresponds to a tsip-achievable permutation.  Note that if a permutation is tsip-achievable, then it is also deque-achievable, but the converse does not hold.

\begin{defn}We call an operation sequence $w$ {\em canonical} if it has the following three properties:
\begin{itemize}
	\item (outputs eagerly) $w$ contains no consecutive terms $I_{1}O_{2}$ or $I_{2}O_{1}$.
	\item (standard) Any tsip sub-word of $w$ begins with $I_{1}.$
	\item (top happy) Whenever one of the letters $I_{2}$ or $O_{2}$ appears in $w$, the number of preceding $I$s in $w$ must be at least two greater than the number of preceding $O$s. In other words, these two letters only appear when there are at least two numbers already in the deque.
\end{itemize}
\end{defn}

Note that this definition also defines what is meant by the terms {\em outputs eagerly,}  {\em standard} and {\em top happy.}

In Proposition \ref{dequeops} we will prove that every deque achievable permutation $\pi$ is produced by a unique canonical operation sequence. This is analogous to the result in \cite{AB15}, which in our language states that every tsip achievable permutation $\pi$ is produced by a unique standard tsip-word which outputs eagerly.

\begin{lemma} Any achievable permutation $\pi$ can be produced by a canonical operation sequence.\end{lemma}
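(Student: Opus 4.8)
The plan is to start from an arbitrary achievable permutation $\pi$, fix \emph{any} operation sequence $w$ that produces $\pi$, and then successively rewrite $w$ into a canonical one by treating the three canonical properties one at a time, in the order (\textbf{top happy}) $\to$ (\textbf{outputs eagerly}) $\to$ (\textbf{standard}), arguing at each stage that the rewriting preserves the permutation produced and does not destroy the properties already established. The key point throughout is that a local rewriting of the word corresponds to a different, but equally valid, sequence of deque operations yielding the same output permutation.

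First I would establish the \textbf{top happy} property. If some $I_2$ or $O_2$ occurs in $w$ at a moment when there are fewer than two elements in the deque, consider the first such occurrence. If it is an $I_2$ at a moment when the deque is empty, replace that $I_2$ by $I_1$ (inserting the first element at the top rather than the bottom of an empty deque produces the same configuration). If it is an $I_2$ or $O_2$ at a moment when the deque holds exactly one element, note that for a one-element deque the top and bottom coincide, so swapping the subscript $2\leftrightarrow1$ on that single letter produces the same element movement; one checks the resulting word is still a valid operation sequence producing $\pi$. Iterating (always taking the earliest violation) terminates and yields a top-happy sequence producing $\pi$.

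Next, with top-happiness in hand, I would remove the \textbf{outputs eagerly} violations. Suppose $w$ contains a consecutive pair $I_1O_2$ (the case $I_2O_1$ is symmetric). Because of top-happiness, just before this pair there are at least two elements in the deque, so the $O_2$ removes the element that was second-from-top. The sub-operation ``push $x$ on top, then pop from the bottom'' has the same net effect as ``pop from the bottom, then push $x$ on top'', i.e. we may replace $I_1O_2$ by $O_2I_1$ without changing the deque contents at the end of the pair, hence without changing $\pi$. To see this process terminates, observe that such a swap moves an $O$ strictly to the left; assigning to a word the sum of the positions of its $O$-letters gives a strictly decreasing nonnegative integer potential, so only finitely many swaps occur. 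One must also check that the swap cannot create a new $I_2O_1$ or $I_1O_2$ adjacency that reintroduces problems indefinitely — this is where the monotone potential argument does the real work — and that validity (the prefix condition on $I$'s versus $O$'s) and top-happiness are preserved, which they are since the multiset of letters in every prefix is unchanged except possibly being improved.

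Finally I would enforce the \textbf{standard} property: every maximal tsip sub-word must begin with $I_1$. Here I would invoke the analogous result from \cite{AB15} (quoted in the excerpt) at the level of each tsip sub-block, or argue directly that if a tsip sub-word begins with $I_2$ one may relabel $1\leftrightarrow2$ throughout that sub-word — since a tsip sub-word uses the two stacks independently, this relabelling just renames the stacks and produces the same permutation — choosing the relabelling so that the block starts with $I_1$; one checks this relabelling does not disturb ``outputs eagerly'' or ``top happy.'' I expect the main obstacle to be the bookkeeping in the middle step: verifying that the eager-output rewriting genuinely terminates and that the local swaps interact correctly with top-happiness and with the tsip-block structure, rather than cascading. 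The cleanest route is probably to phrase the whole argument as: among all operation sequences producing $\pi$, pick one that is lexicographically minimal (under a suitable ordering of the alphabet, e.g. with $O$'s preferred early and subscript $1$ preferred) subject to being top-happy, and then show any violation of ``outputs eagerly'' or ``standard'' would contradict minimality via one of the above local moves.
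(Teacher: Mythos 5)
Your overall strategy (local rewritings that preserve the output permutation, one for each of the three canonical properties, packaged at the end as a lexicographic-minimality argument) is essentially the paper's, and your moves for \emph{outputs eagerly} (the transposition $I_1O_2\to O_2I_1$, valid because top-happiness guarantees the bottom element popped is not the element just pushed) and for \emph{standard} (flipping all subscripts in a tsip sub-word beginning with $I_2$) are the right ones. But your \emph{top happy} step contains a genuine error. When an $I_2$ occurs with exactly one element already in the deque, changing that single letter to $I_1$ does \emph{not} preserve the permutation produced: inserting the new element at the top rather than the bottom of a one-element deque yields the reversed two-element configuration, and the suffix then outputs a different permutation. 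Concretely, $I_1I_2O_1O_1$ produces $12$, while your rewrite $I_1I_1O_1O_1$ produces $21$. The correct move — and the one the paper uses — is to flip the subscripts of the \emph{entire suffix} beginning at the offending letter: a deque holding at most one element is symmetric under exchanging its two ends, so the reflected suffix has the same effect (here $I_1I_2O_1O_1\to I_1I_1O_2O_2$, which again produces $12$). The same whole-suffix reflection handles the $O_2$ case uniformly.

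A secondary problem is your claim that the transposition $I_1O_2\to O_2I_1$ preserves top-happiness ``since the multiset of letters in every prefix is unchanged except possibly being improved.'' The prefix preceding the $O_2$ loses an $I$, so if the deque held exactly two elements before that output, the moved $O_2$ now violates top-happiness; your staged rewriting would then have to loop back to stage one, and your potential function (sum of positions of $O$-letters) does not decrease under subscript changes, so termination of the combined process is not established. The paper sidesteps all of this bookkeeping by running a single argument: order the letters $O_1<O_2<I_1<I_2$, take $w$ lexicographically minimal among all operation sequences producing $\pi$, and observe that each of the three moves (adjacent transposition, subscript-flip of a tsip sub-word, subscript-flip of a suffix starting when the deque has at most one element) produces a strictly smaller word producing the same $\pi$; minimality then forces all three canonical properties at once. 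You name this route in your last sentence but do not carry it out, and without it the interleaving and termination issues in your staged version remain open.
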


\begin{proof} Assign the ordering $O_{1}<O_{2}<I_{1}<I_{2}$ to the letters. Since $\pi$ is achievable, it is produced by some operation sequence. Let $w$ be the operation sequence which produces $\pi$ and which is lexicographically minimal. Then we just need to show that $w$ is canonical.

If $w$ contains either $I_{1}O_{2}$ or $I_{2}O_{1}$ as a sub-word, then we can switch these letters to produce $w'<w$, where $w'$ also produces $\pi$. But this contradicts the minimality of $w$, so $w$ does not contain either $I_{1}O_{2}$ or $I_{2}O_{1}$. Hence $w$ outputs eagerly.

Suppose that $w$ contains a tsip sub-word $v$ which begins with $I_{2}$. Then we can form $v'$ by changing each subscript in $v$ to the other subscript. Note that $v'$ begins with $I_{1}$, so $v'<v$. Moreover, $v$ and $v'$ have the same effect on the state of the machine as each other. Hence, if $w=avb$, then the word $w'=av'b$ produces the same permutation $\pi$. But $w'<w$, which contradicts the minimality of $w$. So $w$ does not contain a tsip sub-word beginning with $I_{2}$. Hence, $w$ is standard.

Now suppose that $w$ can be split up as $w=uv$, where the number of $I$s in $u$ is at most one more than the number of $O$s, and where $v$ begins with either $I_{2}$ or $O_{2}$. Then we can form $v'$ by changing each subscript in $v$ to the other subscript. Note that $v'$ begins with $I_{1}$ if $v$ begins with $I_{2}$, and $v'$ begins with $O_{1}$ if $v$ begins with $O_{2}$, so in either case $v'<v$. After $u$ is applied to the machine, the deque contains at most one element, so $v'$ and $v$ have the same effect on the machine, with $v'$ essentially doing everything on the opposite end of the deque to $v$. Hence $uv'$ produces $\pi$. Moreover, $uv'<uv=w$, contradicting the minimality of $w$. Therefore, $w$ cannot be split as $uv$ in this way, so whenever one of the letters $I_{2}$ or $O_{2}$ appears in $w$, the number of preceding $I$s in $w$ must be at least two greater than the number of preceding $O$s. Hence, $w$ is top happy.

Therefore, $w$ is standard, top happy and outputs eagerly. Hence $w$ is canonical.\end{proof}

As in \cite{AB15}, we will consider the {\em type} of an operation sequence $w$:

\begin{defn} The {\em type} of an operation sequence $w$ is the word over the alphabet $\{I,O\}$ obtained by deleting all of the subscripts of $w$.\end{defn}

\begin{lemma}\label{type} If two operation sequences $u$ and $v$ produce the same permutation $\pi$ and both output eagerly, then $u$ and $v$ have the same type. Moreover, the $i$th operation of $u$ moves the same item as the $i$th  operation of $v$.\end{lemma}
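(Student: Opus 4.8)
The plan is to prove the two claims together by induction on the length of the operation sequences, tracking both the type (the $\{I,O\}$-word) and the identity of the item moved at each step. First I would set up the induction on the common length $N$ of $u$ and $v$, with the trivial base case $N=0$. For the inductive step, I want to compare the first operations of $u$ and $v$. Both $u$ and $v$ start by inputting items (the first letter of any operation sequence is an $I$, since no prefix may have more $O$'s than $I$'s), and in fact, since the permutation $\pi$ is fixed, I claim the first letter of $u$ and of $v$ must both be $I$ and must move the same item — namely the first item of the input. This is immediate because the very first operation of any operation sequence producing $\pi$ reads the first element of the input sequence. So the first operations agree in type and in item moved.

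The real work is to reduce to a shorter instance. The naive idea ``delete the first letter of each and apply induction'' fails because after deleting the shared initial $I$ the residual words need not produce the same permutation in the same machine configuration — the contents of the stacks/deque differ. Instead I would argue about where the \emph{eagerness} hypothesis bites. The key structural fact to extract is: in an eagerly-outputting operation sequence, as soon as the item that must be output next (i.e., the smallest item not yet output, equivalently the next element of $\pi$) is available to be output, it \emph{is} output immediately — more precisely, there is no $I$ operation allowed to intervene between making it available and outputting it, because that $I$ followed eventually by the forced $O$ of that item would create a forbidden $I_1O_2$ or $I_2O_1$ pattern, or could be commuted past it. I would make this precise: show that if $u$ outputs eagerly then the type of $u$ is \emph{forced} by $\pi$ alone. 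Concretely, process $\pi$ from left to right; maintain the number of items currently in the machine; the type word is the unique $\{I,O\}$-word in which we emit $O$ exactly when the next element of $\pi$ is the most-recently-or-appropriately-positioned available item and emit $I$ otherwise. Because this description refers only to $\pi$ (not to $u$), any two eager sequences for $\pi$ have the same type; and since the type together with $\pi$ determines which input item enters at each $I$ and which output item leaves at each $O$, the $i$th operations of $u$ and $v$ move the same item.

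The cleanest way to package this is to define, for each prefix length $i$, the quantity $c_i(w) = (\#I\text{'s in the length-}i\text{ prefix}) - (\#O\text{'s in the length-}i\text{ prefix})$, the number of items in the machine after $i$ steps, and to prove by induction on $i$ that $c_i(u) = c_i(v)$ for all $i$; this is equivalent to the type statement. The inductive step is where eagerness is used: given $c_{i}(u)=c_i(v)$, I show the $(i{+}1)$st letters have the same $I/O$-label. Suppose not; say $u$'s is $I$ and $v$'s is $O$. Then $v$ outputs some item $x$ at step $i{+}1$, so $x$ is the next element of $\pi$ and $x$ is currently in the machine in an outputtable position; but then in $u$, this same item $x$ is also in the machine (same multiset of items after $i$ steps, by the item-tracking part of the induction) in an outputtable position, yet $u$ performs an $I$ — and I then exhibit the forbidden consecutive pair or a commutation contradicting eagerness by following $u$ forward to the forced moment it finally outputs $x$. \textbf{The main obstacle} is precisely this last contradiction: making rigorous the claim that ``deferring the output of an available next-item past an input always creates an $I_aO_b$ (with $a\neq b$) adjacency or violates eagerness,'' since between the deferred input and the eventual output of $x$ there may be many intervening operations, and one must argue that an eager sequence cannot thread this needle. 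I expect this to require a short separate lemma about eager sequences — essentially that an eager sequence never has an $I$ immediately followed (after the machine returns to the same or lower occupancy) by an $O$ of an item that was already present before that $I$ — which is where the real content of Lemma \ref{type} lives.
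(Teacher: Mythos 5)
Your strategy is the same as the paper's: induct on the prefix length, observe that after $k$ steps the two machines hold the same multiset of not-yet-output items (same number of $I$'s and $O$'s, same output prefix of $\pi$), and argue that eagerness forces the type of the $(k{+}1)$st letter to depend only on whether the next element of $\pi$ is currently in the deque, after which the item moved is determined (next input element, or next output element) in either case. The one place you stop short --- ``an eager sequence cannot perform an input while the next item to be output is already available'' --- is indeed the crux, and as written your proposal does not prove it; the paper asserts it in one sentence without further justification. So the gap is real, but it is much smaller than you fear: you do not need to follow $u$ forward through ``many intervening operations'' or invoke any commutation argument.

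Here is the missing two-line argument. Suppose the next item $d$ of $\pi$ to be output is in the deque after step $k$ but $u_{k+1}$ is an input. Take the maximal run of inputs $u_{k+1}\cdots u_{k+r}$ and the output $u_{k+r+1}=O_b$ that ends it (an output must eventually occur since $u$ is a complete operation sequence). The first output after the run must remove $d$, since outputting anything else would place the wrong element next in $\pi$. If $u_{k+r}=I_b$, then $O_b$ removes the item just inserted at end $b$, which entered the machine after $d$ and so is not $d$ --- contradiction. If $u_{k+r}=I_{3-b}$, then $u_{k+r}u_{k+r+1}$ is a forbidden pair $I_{3-b}O_b$, contradicting eagerness. (One also needs that $d$, when in the deque, sits at an accessible end; if it did not, every subsequent output would remove a non-$d$ item before $d$, so $u$ could never produce $\pi$.) The converse direction is immediate: if $d$ is not yet in the machine, no output is legal. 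With this in hand your induction closes exactly as in the paper.
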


\begin{proof}Let $u=u_{1}u_{2}\ldots u_{2n}$ and let $v=v_{1}v_{2}\ldots v_{2n}$. We will prove the lemma by induction. Assume that for some $k$, the words $u_{1}\ldots u_{k}$ and $v_{1}\ldots v_{k}$ have the same type. We will prove that the letters $u_{k+1}$ and $v_{k+1}$ also have the same type, and move the same item as each other. After the $k$th operation, both $u$ and $v$ have the same number of input steps and the same number of output steps as each other, so, since they produce the same sub-permutation $\pi$, the items currently in the deque according to $u$ must be the same as the items currently in the deque according to $v$, though the order may be different. Since $u$ and $v$ output eagerly, they will both output at this point if and only if the next item to be output is currently in the deque. Since this is the same for both $u$ and $v$, the letter $u_{k+1}$ and $v_{k+1}$ have the same type. Moreover, if they are both output steps, then the element for each will be the next element to be output, whereas if they are both input steps, the item moved will be the next element in the input. In both cases the item moved is the same for both $u$ and $v$. This completes the induction, so $u$ and $v$ have the same type.\end{proof}

\begin{prop}\label{dequeops}Every deque-achievable permutation $\pi$ is produced by a unique canonical operation sequence.\end{prop}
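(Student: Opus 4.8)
The existence of a canonical operation sequence producing $\pi$ has already been shown (it is the lemma asserting that any achievable permutation can be produced by a canonical operation sequence), so the plan is to prove uniqueness, by induction on position within the operation sequence. Suppose $w$ and $w'$ are both canonical and both produce $\pi$. Since both output eagerly, Lemma~\ref{type} gives that they have the same type and that, for every $i$, the $i$th operation of $w$ moves the same item as the $i$th operation of $w'$. I would show $w_1\cdots w_k=w'_1\cdots w'_k$ for all $k$; as the two words have the same length this gives $w=w'$. Assume this for some $k$, so that after $k$ steps the deque holds the same ordered list $[a_1,\dots,a_m]$ in both runs; by Lemma~\ref{type} the operations $w_{k+1}$ and $w'_{k+1}$ have the same type and move the same item $x$, and only their subscript is in question.

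If $w_{k+1}$ is an output, then $x$ lies at an end of $[a_1,\dots,a_m]$; when $m\ge 2$ it lies at a unique end (the $a_i$ being distinct), which fixes the subscript, and when $m\le 1$ the top happy condition forces subscript $1$. If $w_{k+1}$ is an input and $m\le 1$, top happy again forces subscript $1$. So the only case to exclude is that $w_{k+1}$ and $w'_{k+1}$ are inputs, $m\ge 2$, and $\{w_{k+1},w'_{k+1}\}=\{I_1x,I_2x\}$. Write $w^{(1)}$ for whichever of the two puts $x$ on top (deque $[x,a_1,\dots,a_m]$) and $w^{(2)}$ for the one that puts $x$ on the bottom (deque $[a_1,\dots,a_m,x]$). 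By Lemma~\ref{type} the runs stay synchronised in type and in the item moved, so there is a common first position $k+1+\ell$ at which $x$ is output afterwards; in $w^{(1)}$ this is an $O_1$ (in $w^{(1)}$ the element $x$ can leave the deque only from the top), and in $w^{(2)}$ it is an $O_2$.

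To reach a contradiction I would inspect $w_{k+2},\dots,w_{k+1+\ell}$. Position $k+2$ cannot be an output: the item output at $k+2$ would lie in $\{x,a_m\}\cap\{a_1,x\}=\{x\}$, so $w^{(2)}_{k+1}w^{(2)}_{k+2}=I_2O_2$ would be a tsip sub-word beginning with $I_2$, contradicting that $w^{(2)}$ is standard. Hence $k+2$ is an input and $\ell\ge 2$; let $x_2,\dots,x_\ell$ be the elements inserted at positions $k+2,\dots,k+\ell$. The item $z$ output at $k+1+\ell$ lies at an end of both deques; no $a_i$ can be $z$ (each is strictly interior to at least one of the two deques), and if $z\in\{x,x_2,\dots,x_{\ell-1}\}$ then in $w^{(1)}$ the elements $z$ and the last-inserted $x_\ell$ sit at opposite ends, so $w^{(1)}_{k+\ell}w^{(1)}_{k+1+\ell}$ is $I_1O_2$ or $I_2O_1$, contradicting that $w^{(1)}$ outputs eagerly. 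Hence $z=x_\ell$; and if $x_\ell$ were inserted by $I_2$ in either run, that run would have $I_2O_2$ at positions $k+\ell,k+1+\ell$ (again a forbidden tsip sub-word), so $x_\ell$ is inserted by $I_1$ in both runs.

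Carrying out the step above leaves the two runs in deque states that agree except that $x$ sits on the top side in $w^{(1)}$ and on the bottom side in $w^{(2)}$ (while the assignments of $x_2,\dots,x_{\ell-1}$ may now disagree). So the induction cannot be run on the bare equality of partial words: it must be set up with a stronger invariant that records such ``one distinguished element on opposite sides'' configurations, and the hard part --- the step I expect to be the main obstacle --- is to choose that invariant correctly and to show that every configuration of that form eventually forces one of the forbidden adjacencies $I_1O_2$, $I_2O_1$, or the forbidden tsip sub-word $I_2O_2$. (An equivalent packaging is to prove instead that a canonical sequence for $\pi$ is the lexicographically least operation sequence producing $\pi$ for the order $O_1<O_2<I_1<I_2$ used in the existence lemma; this gives uniqueness at once but meets precisely the same obstruction when the deque holds at least two elements before an input.)
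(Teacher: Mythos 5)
Your setup coincides with the paper's: existence is quoted from the earlier lemma, and Lemma~\ref{type} is used to reduce uniqueness to the claim that at the first index of disagreement both letters are inputs of the same item placed at opposite ends, with at least two items already in the deque (top-happiness disposing of the other cases). That reduction is correct and is exactly how the paper begins. But the proof is not complete: you explicitly stop at the point where the contradiction must be extracted, and the local analysis you sketch before stopping cannot be pushed through. The condition that is ultimately violated is \emph{standardness} --- ``no tsip sub-word begins with $I_2$'' --- and the offending tsip sub-word can be arbitrarily long, so no amount of inspecting adjacent pairs ($I_1O_2$, $I_2O_1$, or $I_2O_2$) will produce the contradiction in general. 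Your own closing remark, that the induction leaves a ``one distinguished element on opposite sides'' configuration and needs an unspecified stronger invariant, is precisely the gap.

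The missing idea is the paper's chain construction. Writing $v$ for the sequence whose $k$th letter is $I_2$, one sets $k_0=k$, lets $l_i$ be the position where the item input at $k_i$ is output, and chooses $k_{i+1},l_{i+1}$ with $k_i<k_{i+1}<l_i<l_{i+1}$ maximising $l_{i+1}$, terminating at some $m$. Using the first position $j$ at which an item already in the deque before step $k$ is output, one proves by induction that all $k_i,l_i<j$, that $u$ and $v$ use opposite subscripts at every $k_i$ and $l_i$, and that input and output of each chained item act on the same side of the deque (because some older item blocks the other side throughout). The maximality of the $l_{i+1}$ then shows every item input in $v_k\cdots v_{l_m}$ is also output there, so $v_k\cdots v_{l_m}$ is a tsip sub-word of $v$ beginning with $I_2$, contradicting that $v$ is standard. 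Separately, your intermediate paragraph is internally inconsistent: you define $k+1+\ell$ as the position where $x$ is output, then treat positions $k+2,\dots,k+\ell$ as all being inputs and argue about an output item $z$ that may differ from $x$; in general there are interleaved outputs before $x$ leaves, so that bookkeeping does not stand as written.
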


\begin{proof} We have already shown that every deque-achievable permutation is produced by some canonical operation sequence, so it remains to show that this operation sequence is unique. Let $u$ and $v$ be two canonical operation sequences that produce the permutation $\pi$, with $u\leq v$. Then we just need to show that $u=v$.

Suppose for the sake of contradiction that $u<v$. Let $u=u_{1}\ldots u_{2n}$ and let $v=v_{1}\ldots v_{2n}$, and let $k$ be minimal such that $u_{k}\neq v_{k}$, so $u_{k}<v_{k}$. Note then that before the $k$th move, the position of the deque, input and output is the same according to either $u$ or $v$. By Lemma \ref{type}, $u$ and $v$ have the same type, so the letters $u_{k}$ and $v_{k}$ have the same type. Therefore, $u_{k},v_{k}$ are either $O_{1}$, $O_{2}$, respectively, or $I_{1}$, $I_{2}$, respectively. Since $v_{k}$ is $O_{2}$ or $I_{2}$, and $v$ is top happy, there must be at least two items in the deque before this move. Now, by Lemma \ref{type}, the element moved by $u_{k}$ and $v_{k}$ must be the same, but $u_{k}$ and $v_{k}$ act at opposite ends of the deque, so they must both be input operations. Therefore, $u_{k}=I_{1}$ and $v_{k}=I_{2}$. We will now obtain a contradiction by showing that the operation sequence $v$ is not standard.

Define the sequences $k_{0},k_{1},\ldots,k_{m}$ and $l_{0},l_{1},\ldots,l_{m}$ of indices in $\{k,\ldots,2n\}$  as follows:
\begin{itemize}
\item $k_{0}=k$
\item For each $i$, the operations $v_{k_{i}}$ and $v_{l_{i}}$ move the same item, with $v_{k_{i}}$ an input and $v_{l_{i}}$ an output move. In particular, this determines $l_{0}$.
\item For $i\geq0$, the values $k_{i+1}$ and $l_{i+1}$ are chosen inductively so that $k_{i}<k_{i+1}<l_{i}<l_{i+1}$. Moreover, they are chosen to maximise $l_{i+1}$. If no such $k_{i+1}$ and $l_{i+1}$ exist, then we set $m=i$ and terminate. Note that the process must terminate at some point since $l_{1},l_{2},\ldots$ is a sequence of integers which is increasing and bounded above.
\end{itemize}
For $0\leq i\leq m$, let $t_{i}\in\{1,2\}$ be the value such that $v_{k_{i}}=I_{t_{i}}$ 

Let $D$ be the set of items which are in the deque just before operation $k$. Let $v_{j}$ be the first operation which outputs an item in $D$, and let this item be $d$. Then $d$ must be at one of the ends of the deque just before operation $k$. We will prove the following statements for $0\leq i\leq m$ in a single induction:
\begin{itemize}
\item $k\leq k_{i}<j.$
\item $u_{k_{i}}=I_{3-t_{i}}$. That is, $u_{k_{i}}$ and $v_{k_{i}}$ put the element they move at opposite ends of the deque.
\item $l_{i}<j.$
\item $v_{l_{i}}=O_{t_{i}}$. So $v_{k_{i}}$ and $v_{l_{i}}$ act on the same side of the deque.		\item $v_{l_{i}}=O_{t_{i}}.$
	\item $u_{l_{i}}=O_{3-t_{i}}$. So $u_{k_{i}}$ and $u_{l_{i}}$ act on the same side of the deque.
\item $u_{l_{i}}=O_{3-t_{i}}.$
\end{itemize} 

For the base case, by the definition of $k$ and $j$, we have $k_{0}=k<j$ and $u_{k_{0}}=u_{k}=I_{1}=I_{3-t_{0}}$.

Now for the inductive step. We will assume that $k\leq k_{i}<j$ and $u_{k_{i}}=I_{3-t_{i}}$ and show that $l_{i}<j$, $v_{l_{i}}=O_{t_{i}}$, $u_{l_{i}}=O_{3-t_{i}}$, $k\leq k_{i+1}<j$ and $u_{k_{i+1}}=I_{3-t_{i+1}}$. Let $a_{i}$ be the item moved by operations $u_{k_{i}},v_{k_{i}},u_{l_{i}}$ and $v_{l_{i}}$. Since $u_{k_{i}}=I_{3-t_{i}}$ and $v_{k_{i}}=I_{t_{i}}$, we have $\{u_{k_{i}},v_{k_{i}}\}=\{I_{1},I_{2}\}$. Therefore, one of $v_{k_{i}}$ and $u_{k_{i}}$ places $a_{i}$ on the end of the deque which $d$ has to be removed from. So $a_{i}$ must be removed before move $j$, hence $l_{i}<j$. Since there is an item $d$ on the deque throughout the whole time that $a_{i}$ is on the deque, the input and output operations which move $a_{i}$ must act on the same side of the deque. Hence, $v_{l_{i}}=O_{t_{i}}$ and $u_{l_{i}}=O_{3-t_{i}}$. Now to finish the induction, we just need to show that $k\leq k_{i+1}<j$ and $u_{k_{i+1}}=I_{3-t_{i+1}}$, in the case where $i<m$. First, $k\leq k_{i}<k_{i+1}<l_{i}<j$.

Also, by construction, the output move $v_{l_{i}}=O_{t_{i}}$ for $a_{i}$ occurs between the input and output moves for $a_{i+1}$, so we must have $v_{k_{i+1}}=I_{3-t_{i}}$, so $t_{i+1}=3-t_{i}$. Similarly, $u_{k_{i+1}}=I_{t_{i}}=I_{3-t_{i+1}}$. This completes the induction.

We will now show that $w=v_{k}v_{k+1}\ldots v_{l_{m}}$ is a tsip word. Since $l_{m}<j$, nothing is output from the deque from $D$.
Now, suppose some item is input but not output by $w$. Let the input and output moves of that item be $v_a$ and $v_b$, so $k<a<l_m<b$ and let $i$ be maximal such that $k_i<a$. If $i=m$ then $k_m<a<l_m<b$, which means that we shouldn't have terminated at $m$. If $i<m$ then $k_i<a<k_{i+1}<l_i<l_{i+1}<b$, which contradicts the maximality of $l_{i+1}$. Hence every item which is input by $w$ is also output by $w$. Therefore, $v_{k}\ldots v_{l_{m}}$ is a tsip sub-word of $v$ which begins with $I_{2}$, so $v$ is not standard. This contradicts the statement that $v$ is canonical, so we must have $u=v$. Therefore, any two canonical operation sequences which produce the same permutation are equal.
\end{proof}

\section{Enumeration}

In the previous section we showed that every deque-achievable permutation is produced by a unique canonical operation sequence, hence the number of deque achievable permutations of length $n$ is equal to the number of canonical operation sequences of length $2n$. Recall that this sequence is given by the generating function $D(t)$.

Before we proceed  any further, we will define the generating functions $Q$ and $P$ in the same way as in \cite{AB15}. Let $Q(a,u)$ be the generating function for quarter plane loops, counted by half-length, conjugate to the variable $u$ and number of NW or ES corners, conjugate to $a$. Let $P(t)$ be the length generating function for permutations which can be produced by two stacks in parallel. We note that this is the same as the generating function for standard tsip words which output eagerly, counted by half-length. Now using corollary 9 from \cite{AB15}, we have the following relation between these generating functions:


\begin{equation}\label{corS}Q\left(\frac{1}{P}-1,\frac{tP^{2}}{(1-2P)^{2}}\right)=2P-1.\end{equation}

Now we will define some new generating functions to characterise the length generating function $D(t)$ of deque-achievable permutations. We will call a non-empty operation sequence \textit{unbreakable} if it contains no tsip-sub-words, apart from possibly itself, and the deque is never empty during the associated procedure. Let $M(a,u,x)$ be the generating function for top happy, unbreakable operation sequences, counted by half-length, conjugate to $u$, number of appearances of a sub-word $I_{1}O_{2}$ or $I_{2}O_{1},$ conjugate to $a$, and number of times when there is only one element in the deque, conjugate to $x$.

We define a bi-coloured Dyck path to be a Dyck path where each step is coloured red or blue. A multicoloured peak is an up-step followed by a down-step of the opposite colour. Let $T(a,u,x)$ be the generating function for bi-coloured Dyck paths, where every step from height 0 or 1 is red, counted by half-length, conjugate to $u$, number of multicoloured peaks, conjugate to $a$, and number of vertices of the path at height 1, conjugate to $x$.

We define a new generating function $Q_{1}(a,u,x)$ for quarter plane loops where every step from the origin is a north step, counted by half-length, conjugate to $u$, number of NW or ES corners, conjugate to $a,$ and number of steps to the origin, conjugate to $x$. This is related to $Q$ by $Q(a,u)=Q_{1}(a,u,2)$. We can also think of $Q$ and $Q_{1}$ as generating functions for tsip words,  by replacing each $N$, $E$, $S$, $W$ step in the quarter plane loop with $I_{1}$, $I_{2}$, $O_{1}$, $O_{2}$, respectively. Then $Q(a,u)$ is the generating function for tsip operation sequences, counted by half-length, conjugate to $u$ and number of appearances of a sub-word $I_{1}O_{2}$ or $I_{2}O_{1}$, conjugate to $a$. The generating function $Q_{1}(a,u,x)$ is similar, except that it only counts those tsip words where every letter which appears after an equal number of $I$s and $O$s is an $I_{1}$. The other difference is that $Q_{1}$ also counts these words by the number of times there are an equal, non-zero number of $I$s and $O$s, conjugate to $x$.

\begin{lemma} The generating function for bi-coloured Dyck paths $T(a,u,x)$ is given by the equation
\begin{equation}\label{T}T(a,u,x)=\frac{4+2xu-2xau-x+x\sqrt{1-12u+4u^{2}-4au+4a^{2}u^{2}-8au^{2}}}{4-2xu-2xau-x+x\sqrt{1-12u+4u^{2}-4au+4a^{2}u^{2}-8au^{2}}}.\end{equation}\end{lemma}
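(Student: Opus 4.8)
The idea is to build $T$ from the bottom up: first handle the part of the path living at height $\ge 2$, where no colour restriction applies, and then graft on the forced‑red structure at heights $0$ and $1$. A single auxiliary series for unrestricted bi‑coloured Dyck paths does all the real work, after which everything is a first‑return / excursion decomposition and one simplification.

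\textbf{Step 1: the unrestricted auxiliary series.} Let $S(a,u)$ be the generating function for bi‑coloured Dyck paths with \emph{no} colour restriction, counted by half‑length (conjugate to $u$) and number of multicoloured peaks (conjugate to $a$), with the empty path contributing $1$. A first‑return decomposition writes a non‑empty such path as $U^{c_1}\,P\,D^{c_2}\,P'$, with $c_1,c_2$ colours, $P$ an elevated unrestricted bi‑coloured Dyck path and $P'$ an unrestricted bi‑coloured Dyck path; the only multicoloured peak not already counted inside $P$ or $P'$ is the peak $U^{c_1}D^{c_2}$ that occurs when $P$ is empty, multicoloured iff $c_1\ne c_2$. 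Summing over the four colour choices gives $u(2+2a)$ when $P=\emptyset$ and $4u(S-1)$ otherwise, so $S = 1 + \bigl(4uS-2u+2au\bigr)S$, i.e. $4uS^{2}-(1+2u-2au)S+1=0$; taking the branch with $S=1+O(u)$,
\[
S(a,u)=\frac{1+2u-2au-\sqrt{\Delta}}{8u},\qquad \Delta:=1-12u+4u^{2}-4au+4a^{2}u^{2}-8au^{2}.
\]

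\textbf{Step 2: decomposing a restricted path.} Cut a $T$‑path at its returns to height $0$ into a sequence of \emph{arches}; an arch has the form $U_r\,G\,D_r$ (both extreme steps are forced red, starting at height $0$ resp.\ $1$), where $G$ runs from height $1$ to height $1$ staying at height $\ge 1$. In turn $G$ is a sequence of excursions above height $1$, each of the form $U_r\,E\,D_?$: the opening step $1\to 2$ is forced red, $E$ is an unrestricted bi‑coloured Dyck path living at height $\ge 2$, and the closing step $2\to 1$ is \emph{free} in colour. The bookkeeping points to get right are: (i) the free colour of $D_?$ affects the peak count only when $E=\emptyset$ (then $U_rD_?$ is a peak, multicoloured iff $D_?$ is blue), so an excursion contributes $u(1+a)+2u(S-1)=u(2S+a-1)$, the factor $2$ recording the irrelevant colour of $D_?$ when $E\ne\emptyset$; (ii) no multicoloured peak is created at the junction of an arch's $U_r$/$D_r$ with $G$, nor between consecutive excursions; (iii) an arch with $j\ge 0$ excursions has exactly $j+1$ vertices at height $1$.

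\textbf{Step 3: assembling and simplifying; the main obstacle.} By (i)–(iii), a $G$‑block with $j$ excursions contributes $x^{j+1}\bigl(u(2S+a-1)\bigr)^{j}$, so $G$ has generating function $x/\bigl(1-xu(2S+a-1)\bigr)$, an arch has generating function $A=ux/\bigl(1-xu(2S+a-1)\bigr)$, and $T=1/(1-A)=\bigl(1-xu(2S+a-1)\bigr)/\bigl(1-xu(2S+a-1)-ux\bigr)$. Substituting Step 1 yields $u(2S+a-1)=\tfrac14\bigl(1-2u+2au-\sqrt{\Delta}\bigr)$, and clearing the common denominator $4$ turns this into the stated formula. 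I expect the main difficulty to be precisely the statistic‑tracking in Step 2: correctly identifying which down‑steps are colour‑free (only those starting at height $\ge 2$, so $2\to1$ steps are free while $1\to0$ steps are forced), and handling the resulting "free but peak‑irrelevant'' colour choices so that both the power of $a$ and the power of $x$ come out right; checking the cases of half‑length $1$ and $2$ (coefficients $x u$ and $x^{2}(2+a)u^{2}$) is a convenient sanity check on these factors.
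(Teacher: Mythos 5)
Your proof is correct and follows essentially the same route as the paper's: both rest on the same auxiliary series for unrestricted bi-coloured Dyck paths (your $S$ is the paper's $T_{2}$, satisfying the same quadratic $4uS^{2}-(1+2u-2au)S+1=0$), and both then peel off the forced-red structure at heights $0$ and $1$. The only difference is organisational: the paper works through nested first-return recursions via an intermediate series $T_{1}(a,u,y)$, whereas you use explicit sequence (geometric-series) decompositions into arches and excursions above height $1$; the bookkeeping of free down-step colours, multicoloured peaks at junctions, and height-$1$ vertices is the same in both arguments, and your final simplification of $u(2S+a-1)$ reproduces the stated formula.
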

\begin{proof}
Let $T_{1}(a,u,y)$ be the generating function for bi-coloured Dyck paths, where each step from height 0 is red, counted by half-length, conjugate to $u$, multicoloured peaks, conjugate to $a$, and  number of vertices of the path at height 0, including the end points, conjugate to $y$. Let $T_{2}(a,u)$ be the generating function for bi-coloured Dyck paths, counted by half-length, conjugate to $u$, and multicoloured peaks, conjugate to $a$. Note that each multicoloured Dyck path $\omega$ satisfies exactly one of the following:
\begin{itemize}
\item $\omega$ is empty,
\item $\omega$ is made up of a peak followed by a (possibly empty) multicoloured Dyck path,
\item $\omega$ is made up of an up-step, followed by a non-empty multicoloured Dyck path, followed by a down-step, followed by a (possibly empty) multicoloured Dyck path.
\end{itemize}
The contribution to  $T_{2}$ from the first case is simply $1$.

In the second case, the 4 possible peaks are counted by $u(2a+2)$, so the total contribution to $T_{2}$ from this case is $u(2a+2)T_{2}$.

In the third case, the contribution is simply $4u(T_{2}-1)T_{2}$. The multiplier 4 arises because there are four possibilities for the colours of the up step and down step. $T_{2}-1$ counts the first (non-empty) path and $T_{2}$ counts the second path. The term $u$ is in the expression because  the half length of $\omega$ is one more than the sum of the half lengths of the shorter paths. Hence we get the equation
\[T_{2}(a,u)=4u(T_{2}-1)T_{2}+u(2a+2)T_{2}+1.\]
Solving the quadratic gives 
\[T_{2}(a,u)=\frac{1+2u-2au\pm\sqrt{1-12u+4u^{2}-4au+4a^{2}u^{2}-8au^{2}}}{8u}.\]
Note that since $T_{2}(a,0)=1$, we must use the negative square root, so
\[T_{2}(a,u)=\frac{1+2u-2au-\sqrt{1-12u+4u^{2}-4au+4a^{2}u^{2}-8au^{2}}}{8u}.\]

We now calculate $T_{1}(a,u,y)$ in a similar way.
Each multicoloured Dyck path $\omega$, where each step from height 0 is red, satisfies exactly one of the following:
\begin{itemize}
\item $\omega$ is empty,
\item $\omega$ is made up of a peak followed by a (possibly empty) multicoloured Dyck path,
\item $\omega$ is made up of a red up-step, followed by a non-empty multicoloured Dyck path, followed by a down-step, followed by another (possibly empty) multicoloured Dyck path where each step from height 0 is red.
\end{itemize}
The contribution to  $T_{1}$ from the first case is simply $y$.

In the second case, the two possible peaks are counted by $uay+uy$, so the contribution to $T_{1}$ from this case is $(uay+uy)T_{1}$.

In the third case, the half length of $\omega$ is one more than the sum of the half lengths of the shorter paths, the number of vertices at height 0 is one more than the number of vertices at height 0 in the second path. Hence the contribution from this case is $2uy(T_{2}-1)T_{1}$. Hence, we get the equation
\[T_{1}(a,u,y)=2uy(T_{2}-1)T_{1}+(uay+uy)T_{1}+y.\]
Solving this using the formula for $T_{2}$ gives
\[T_{1}(a,u,y)=\frac{4y}{4+2yu-2yau-y+y\sqrt{1-12u+4u^{2}-4au+4a^{2}u^{2}-8au^{2}}}\]

Finally, we can calculate $T(a,u,x)$ in a similar way.
Each multicoloured Dyck path $\omega$, where each step from height 0 or 1 is red, satisfies exactly one of the following:
\begin{itemize}
\item $\omega$ is empty
\item $\omega$ is made up of a red up-step, followed by a (possibly empty) multicoloured Dyck path, where each step from height 0 is red, followed by a red down-step, followed by another (possibly empty) multicoloured Dyck path where each step from height 0 or 1 is red.
\end{itemize}
The contribution to  $T$ from the first case is simply $1$. In the second case, the half length of $\omega$ is one more than the sum of the half lengths of the shorter paths, and the number of multicoloured peaks in $\omega$ is equal to the sum of the numbers of multicoloured peaks in the two shorter paths. The number of vertices at height 1 in the long path is equal to the number of vertices at height 0 in the first short path, plus the number of vertices at height 1 in the second short path. Hence,
\[T(a,u,x)=uT_{1}(a,u,x)\cdot T(a,u,x)+1\]
Solving this using the formula for $T_{1}$ gives
\[T(a,u,x)=\frac{4+2xu-2xau-x+x\sqrt{1-12u+4u^{2}-4au+4a^{2}u^{2}-8au^{2}}}{4-2xu-2xau-x+x\sqrt{1-12u+4u^{2}-4au+4a^{2}u^{2}-8au^{2}}}\]
as required.\end{proof}

\begin{lemma} The generating function $Q_{1}(a,u,x)$ is given by the equation
\begin{equation}\label{Q1}Q_{1}(a,u,x)=\frac{2Q}{2Q-xQ+x}\end{equation}
\end{lemma}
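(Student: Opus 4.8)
The plan is to derive a functional equation for $Q_1(a,u,x)$ by a first-return-to-the-origin decomposition of quarter-plane loops in which every step from the origin is a north step, and then to solve it and identify the result in terms of $Q$. Recall that in the tsip-word picture, $Q_1$ counts tsip operation sequences in which every letter appearing after an equal number of $I$s and $O$s is an $I_1$ (equivalently: every return to the origin is immediately followed by an $N$ step), with $x$ marking the number of times there is an equal, nonzero number of $I$s and $O$s (the returns to the origin), $u$ marking half-length, and $a$ marking $I_1O_2$/$I_2O_1$ sub-words (NW/ES corners).

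First I would decompose such a word $w$ according to whether it is empty or not. If $w$ is nonempty, it starts with $I_1$ (an $N$ step), and we may factor $w = I_1\, v\, O_j\, w'$, where $I_1\cdots O_j$ is the matching first excursion that returns to the origin for the first time, $v$ is the (possibly empty) interior, and $w'$ is the remainder — which is itself a word of the type counted by $Q_1$. The key point is that the first excursion $I_1 v O_j$, read as a path, never touches the origin in between, so after stripping the initial $I_1$, the interior $v$ is an \emph{arbitrary} tsip word staying at height $\ge 1$ — which is exactly what $Q(a,u)$ counts (there is no "standard/first step" restriction inside, because the deque — i.e. the first coordinate — is never empty there, so both $N$ and $E$ can occur freely). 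So the interior excursion, marked with a factor $u$ for the extra $I_1\cdots O_j$ pair and a factor $a$ if the closing step creates an NW/ES corner with whatever precedes it, contributes a $Q$-type generating function; and each completed excursion produces exactly one return, hence one factor of $x$.

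The slightly delicate bookkeeping is the $a$-weight on the corner formed where the first excursion closes: the step $O_j$ returning to the origin can be $O_1$ (=$S$) or $O_2$ (=$W$), and whether it forms a weighted corner depends on the preceding step. The cleanest route is to note that $Q(a,u) = Q_1(a,u,2)$ already packages exactly this: an arbitrary excursion of the deque-nonempty type, with its two ends available, corresponds to the $x=2$ specialization, since $Q$ counts all tsip operation sequences and the two "slots" at each return in $Q_1$ (the choice $O_1$ vs $O_2$ closing, paired with $I_1$ vs $I_2$ opening) collapse to the factor $2$ when we forget the standardness constraint. Carrying this through, the decomposition gives an equation of the form
\begin{equation*}
Q_1(a,u,x) = 1 + x\,\bigl(Q(a,u)-1\bigr)\,Q_1(a,u,x),
\end{equation*}
or a minor variant thereof after correctly accounting for the empty excursion; solving the linear equation for $Q_1$ yields $Q_1 = 1/(1 - x(Q-1))$, and a short algebraic rearrangement puts this in the stated form $\dfrac{2Q}{2Q - xQ + x}$. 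Consistency check: setting $x=2$ returns $2Q/(2Q-2Q+2) = Q$, matching $Q(a,u)=Q_1(a,u,2)$, which is a good sanity test of the constants.

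The main obstacle I anticipate is getting the combinatorial factors at a return-to-origin exactly right — specifically, disentangling the contribution of the closing step $O_1$ or $O_2$ of each excursion (and the $a$-weight of the corner it may form) from the opening $I_1$ of the next block, and verifying that the "every step from the origin is north" restriction interacts correctly with the $a$-weighting. I would resolve this by working directly with the path (quarter-plane loop) language rather than the word language for the corner count, and by pinning down the normalization via the $x=2$ specialization as above; once the linear functional equation is correctly set up, the remainder is a one-line solve and simplification.
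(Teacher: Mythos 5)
There is a genuine gap, and it sits exactly where you flagged the ``slightly delicate bookkeeping.'' Your functional equation $Q_1 = 1 + x(Q-1)Q_1$ uses $Q-1$ as the generating function for a single primitive excursion (a loop touching the origin only at its endpoints), but $Q-1$ counts \emph{all} non-empty loops, i.e.\ non-empty sequences of primitive excursions. If $U$ denotes the generating function for primitive excursions, the sequence decomposition gives $Q = 1/(1-U)$, hence $U = 1 - 1/Q = (Q-1)/Q$, which differs from $Q-1$ by a factor of $Q$. Moreover the restriction that each excursion begin with a north step contributes a factor $\tfrac12$ (by the reflection in the line $x=y$, which swaps $N\leftrightarrow E$ and $S\leftrightarrow W$ and therefore preserves half-length and the count of NW-or-ES corners, so exactly half of the primitive excursions of each weight start with $N$). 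The correct equation is $Q_1 = 1/\bigl(1 - \tfrac{x}{2}U\bigr) = 1/\bigl(1 - \tfrac{x(Q-1)}{2Q}\bigr) = \tfrac{2Q}{2Q-xQ+x}$. Your $1/(1-x(Q-1))$ is \emph{not} an algebraic rearrangement of this: at $x=2$ it gives $1/(3-2Q)$, not $Q$, so your own consistency check (which you applied only to the target formula, not to the formula you derived) actually refutes the derivation.

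A second, independent problem is the claim that the interior $v$ of the first excursion $I_1 v O_j$ is ``an arbitrary tsip word'' counted by $Q(a,u)$. This is false: the loop $NESW$ (word $I_1I_2O_1O_2$) is primitive, but its interior $ES$ (word $I_2O_1$) is not a loop and not a tsip word. The interior of a primitive quarter-plane excursion lives in $\mathbb{N}^2\setminus\{(0,0)\}$ starting from $(0,1)$, and this domain is not a translate of the quarter plane, so the first-return decomposition does not telescope as it would for one-dimensional Dyck paths. The paper avoids describing primitive excursions structurally at all: it solves for $U$ in terms of $Q$ via $Q=1/(1-U)$ and invokes only the reflection symmetry, which is the step your argument is missing.
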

\begin{proof}Let $U(a,u)$ be the generating function for non-empty quarter plane loops, which only touch the origin at the start and end, counted by half-length, conjugate to $u$ and number of $NW$ or $ES$ corners, conjugate to $a$. Then each loop counted by $Q$ can be written uniquely as a sequence of loops counted in $U$, so we get the equation
\[Q(a,u)=\frac{1}{1-U(a,u)}.\]
Or, equivalently,
\[U(a,u)=1-\frac{1}{Q(a,u)}.\]
Now, by reflecting a loop about the line $x=y$, we see that amongst the loops counted by $U$ with a given half-length and number of $NW$ or $ES$ corners, exactly half begin with a north step.  Hence, each loop counted by $Q_{1}$ can be written uniquely as a sequence of loops counted in $\frac{1}{2}U$. Moreover, the power of $x$ in the corresponding monomial in $Q_{1}$ is equal to the number of terms in the sequence of loops from $\frac{1}{2}U$. Therefore, we have the equation
\[Q_{1}(a,u,x)=\frac{1}{1-\frac{1}{2}U(a,u)x}\]
Hence, we can write $Q_{1}$ in terms of $Q$ as desired
\begin{align*}Q_{1}(a,u,x)&=\frac{1}{1-\frac{1}{2}(1-\frac{1}{Q})x}\\
&=\frac{2Q}{2Q-Qx+x}.\end{align*}\end{proof}

Before we can relate these generating functions to $M$ and $D,$ we will need to consider a decomposition of operation sequences, as described in the lemma below. In lemma \ref{opdecu} we will show that this decomposition is unique.

\begin{lemma}\label{opdece}Given a non-empty operation sequence $w$ it is possible to decompose $w$ as \[w=x_{1}w_{1}x_{2}w_{2}\ldots x_{2m-1}w_{2m-1}x_{2m}v,\]
where $m\in\mathbb{Z}_{>0}$, each $x_{i}\in\{I_{1},O_{1},I_{2},O_{2}\}$, such that $x_{1}x_{2}\ldots x_{2m}$ is an unbreakable operation sequence, each $w_{i}$ is a (possibly empty) tsip sub-word, and $v$ is an operation sequence.\end{lemma}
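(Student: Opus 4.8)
The plan is to build the decomposition greedily, scanning $w$ from left to right and alternately peeling off a single letter $x_i$ and then the \emph{longest} tsip sub-word $w_i$ starting where we currently are. Before starting I would record two closure facts, each immediate from the defining prefix-conditions on the counts of $I_j$'s and $O_j$'s: the concatenation of two tsip words is a tsip word, and inserting a tsip word into a tsip word (at any position) again gives a tsip word. I would also observe that the empty word is a tsip word and that any word has a well-defined longest tsip-sub-word prefix, since the lengths of its tsip-sub-word prefixes form a finite set of non-negative integers containing $0$.

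For the construction, for a factor let its \emph{excess} be (number of $I$'s) $-$ (number of $O$'s). Since $w$ is a non-empty operation sequence its first letter is an input, so set $x_1$ to be that letter. Inductively, having produced $x_1,w_1,\dots,w_{i-1},x_i$ with $x_1w_1\cdots x_i$ a prefix of $w$: if the excess of $x_1x_2\cdots x_i$ equals $0$, stop, set $2m=i$, and let $v$ be the remaining suffix; otherwise let $w_i$ be the longest tsip-sub-word prefix of the remaining suffix (possibly empty) and let $x_{i+1}$ be the letter immediately after $w_i$, then repeat. The routine checks are: whenever the excess of $x_1\cdots x_i$ is nonzero the remaining suffix is non-empty — else, since every $w_j$ has excess $0$, the total excess $0$ of $w$ would equal that nonzero excess — so $x_{i+1}$ exists; the excess of $x_1\cdots x_i$ starts at $0$, is $+1$ after $x_1$, and changes by $\pm 1$ at each step, so it returns to $0$ at some even index $2m\ge 2$, and since $w$ is finite the process does terminate there; and, the total excess of $w$ up to the end of $x_{2m}$ being the excess of $x_1\cdots x_{2m}=0$, the suffix $v$ inherits from $w$ the property of being an operation sequence. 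Each $w_i$ is a tsip sub-word by construction, so only the unbreakability of $y:=x_1x_2\cdots x_{2m}$ remains.

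That $y$ is an operation sequence whose deque is never empty mid-procedure is immediate from the choice of $2m$ as the first return of the excess to $0$. The crux is to exclude a proper tsip sub-word $x_a x_{a+1}\cdots x_b$ of $y$, i.e. with $(a,b)\neq(1,2m)$. Reinserting the removed pieces, the corresponding factor of $w$ is $x_a w_a x_{a+1} w_{a+1}\cdots w_{b-1} x_b$, which is a tsip word by the insertion closure. If $a\ge 2$, this tsip word is immediately preceded in $w$ by $w_{a-1}$, so their concatenation $w_{a-1}(x_a w_a\cdots x_b)$ is a tsip word (concatenation closure) and a strictly longer tsip-sub-word prefix than $w_{a-1}$ of the suffix from which $w_{a-1}$ was selected — contradicting maximality. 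If $a=1$, then $x_1w_1\cdots x_b$ is a tsip word, hence an operation sequence, so the excess of $x_1\cdots x_b$ is $0$; as $2m$ is the first such index, $b=2m$, contradicting $(a,b)\neq(1,2m)$. Hence $y$ is unbreakable, completing the decomposition.

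I expect the last paragraph to be the only real obstacle: it is precisely where the ``insert-a-tsip-word-into-a-tsip-word'' closure and the ``first return to $0$'' choice of $2m$ are both essential. Everything else is bookkeeping with excesses, and the uniqueness of the decomposition (needed later, in Lemma~\ref{opdecu}) is not required here.
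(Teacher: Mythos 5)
Your proof is correct and follows essentially the same route as the paper's: greedily alternate a single letter with the longest tsip-sub-word starting at the current position, and rule out a proper tsip sub-word $x_a\cdots x_b$ of $x_1\cdots x_{2m}$ by reinserting the $w_i$'s and contradicting either the maximality of $w_{a-1}$ (if $a\ge 2$) or the first-return/minimality choice of $2m$ (if $a=1$). The only cosmetic difference is that the paper first isolates the minimal non-empty operation-sequence prefix $u$ of $w$ and then decomposes it, whereas you detect the same stopping point on the fly via the excess of $x_1\cdots x_i$; the two constructions coincide.
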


\begin{proof}
We construct the decomposition as follows: First we decompose $w$ as $w=uv$, so that the first point at which the deque is empty is immediately after $u$ is applied, in other words, $u$ is the minimal, non-empty prefix of $w$ which is also an operation sequence. Note that we may have $u=w$. Since the deque is empty before and after $v$ is applied, $v$ is an operation sequence. 

 Now, $x_{1}$ must be the first letter of $u$. Then, for each $i$, we let $w_{i}$ be the longest tsip-sub-word of $u$ which starts immediately after $x_{i}$. Note that this necessarily exists, since $w_{i}$ can always be the empty word. Then $x_{i+1}$ is the letter after $w_{i}$. We continue this until we have decomposed all of $u$ as $u=x_{1}w_{1}\ldots w_{j-1}x_{j}w_{j}$. Note that $u=x_{1}w_{1}\ldots w_{j}$ and $w_{j}$ both contain an equal number of $I$s and $O$s, so $x_{1}w_{1}\ldots x_{j}$ also contains an equal number of $I$s and $O$s. But since $u$ is minimal, $w_{j}$ must be empty. Note that by definition, each $w_{i}$ is a tsip-sub-word. So we just need to prove that $x_{1}x_{2}\ldots x_{j}$ is an unbreakable operation sequence. Since $x_{1}w_{1}\ldots w_{j-1}x_{j}$ is an operation sequence, and each $w_{i}$ is an operation sequence, it follows that $x_{1}x_{2}\ldots x_{j}$ is an operation sequence. Also, if $x_{1}x_{2}\ldots x_{i}$ is an operation sequence for any $i<j,$ then $x_{1}w_{1}\ldots x_{i}$ is also an operation sequence, which contradicts the minimality of $u$. Now we just need to show that $x_{1}\ldots x_{j}$ contains no tsip sub-words, other than itself and the empty word. Suppose that it does contain some tsip sub-word $x_{a}\ldots x_{i}$, with $i\geq a$. If $a>1$ then $w_{a-1}x_{a}w_{a}x_{a+1}\ldots x_{i}$ is also a tsip sub-word, which contradicts the maximality of the length of $w_{a-1}$. If $a=1$ and $i<j$, then $x_{a}x_{a+1}\ldots x_{i}$ is not an operation sequence, so it is certainly not a tsip sub-word. Hence $a=1$ and $i=j$, so $x_{1}x_{2}\ldots x_{j}$ contains no tsip sub-words other than possibly itself. Therefore $x_{1}x_{2}\ldots x_{j}$ is unbreakable.
\end{proof}

\begin{lemma}\label{opdectsip}Let $w$ be a non-empty operation sequence with the decomposition \[w=x_{1}w_{1}x_{2}w_{2}\ldots x_{2m-1}w_{2m-1}x_{2m}v,\]
where $m\in\mathbb{Z}_{>0}$, each $x_{i}\in\{I_{1},O_{1},I_{2},O_{2}\}$, such that $x_{1}x_{2}\ldots x_{2m}$ is an unbreakable operation sequence, each $w_{i}$ is a (possibly empty) tsip sub-word, and $v$ is an operation sequence. Then any tsip sub-word of $w$, which is not a prefix, is contained in one of the words $w_{i}$ or $v$.\end{lemma}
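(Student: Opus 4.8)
My plan is to prove the sharper statement that a tsip sub-word $z$ of $w$ which is not a prefix of $w$ contains none of the separating letters $x_{1},\dots,x_{2m}$. Granting this, the positions of $w$ not covered by any $x_{i}$ are exactly the disjoint consecutive blocks $w_{1},\dots,w_{2m-1}$ together with the trailing block $v$, so the connected sub-word $z$ lies inside a single one of them, which is the assertion. Throughout I will use that $u:=x_{1}w_{1}x_{2}\cdots x_{2m-1}w_{2m-1}x_{2m}$ is an operation sequence (it is the minimal operation-sequence prefix of $w$ built in Lemma~\ref{opdece}, so the deque is empty once $u$ has been performed), that each $w_{i}$ is balanced in both colours, and — from the definition of unbreakable applied to $x_{1}x_{2}\cdots x_{2m}$ — both that this word has no tsip sub-word apart from itself and the empty word, and that $f(i):=\#I(x_{1}\cdots x_{i})-\#O(x_{1}\cdots x_{i})\ge 1$ for $1\le i\le 2m-1$.

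The first step is to reduce to the case where $z$ \emph{begins} at some letter $x_{k}$ with $2\le k\le 2m$. Since $x_{1}$ is the first letter of $w$, any sub-word containing $x_{1}$ would be a prefix, so $z$ avoids $x_{1}$. If $z$ contains some $x_{i}$, let $k$ be the least such index; then $k\ge 2$ and $z$ contains $x_{k}$ but not $x_{k-1}$, so $z$ begins either at $x_{k}$ or at an interior position of $w_{k-1}$. In the latter case write $z=s'z'$, where $s'$ is the non-empty part of $z$ lying inside $w_{k-1}$ and $z'$ begins at $x_{k}$. As a prefix of the tsip word $z$, $s'$ satisfies $\#I_{j}(s')\ge\#O_{j}(s')$; and as a suffix of the tsip word $w_{k-1}$, whose complementary prefix $p'$ satisfies $\#I_{j}(p')\ge\#O_{j}(p')$, it satisfies $\#I_{j}(s')=\#O_{j}(w_{k-1})-\#I_{j}(p')\le\#O_{j}(s')$. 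Hence $s'$ is balanced in each colour, so $z'$ inherits balancedness, and it inherits the tsip prefix conditions too since every prefix of $z'$ becomes a prefix of $z$ once $s'$ is prepended. Thus it suffices to derive a contradiction from the existence of a tsip sub-word $z$ of $w$ beginning at some $x_{k}$ with $2\le k\le 2m$.

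Assume now that $z$ begins at $x_{k}$. Being a tsip word it begins with an $I$, so $x_{k}=I_{t}$ for some $t\in\{1,2\}$, and it ends with an $O$. I would first rule out $z$ running past the end of $u$ into $v$: otherwise $z$ contains the entire suffix $r:=x_{k}w_{k}x_{k+1}\cdots x_{2m}$ of $u$ as a prefix, forcing $\#I(r)\ge\#O(r)$; but writing $u=(x_{1}w_{1}\cdots x_{k-1}w_{k-1})\,r$ and using that $u$ is an operation sequence with the $w_{i}$ balanced gives $\#I(r)-\#O(r)=-f(k-1)\le -1$, a contradiction. Hence $z$ is a prefix of $r$, so either $z=x_{k}w_{k}x_{k+1}w_{k+1}\cdots x_{l}$ ends at a letter $x_{l}$ with $k<l\le 2m$, or $z=x_{k}w_{k}x_{k+1}\cdots x_{l}w'_{l}$ ends inside a block $w_{l}$ with $w'_{l}$ a non-empty prefix of $w_{l}$, for some $k\le l\le 2m-1$. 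The sub-case $l=k$ of the second possibility is impossible, since $z=x_{k}w'_{k}$ with $x_{k}=I_{t}$ would force $\#O_{t}(w'_{k})=\#I_{t}(w'_{k})+1$, which cannot hold for a prefix $w'_{k}$ of the tsip word $w_{k}$. In the remaining cases $l>k$, and I claim $x_{k}x_{k+1}\cdots x_{l}$ is itself a tsip word: deleting all the (balanced) $w$-blocks from each prefix $x_{k}w_{k}\cdots x_{i}$ of $z$ (for $k\le i\le l$) gives $\#I_{j}(x_{k}\cdots x_{i})\ge\#O_{j}(x_{k}\cdots x_{i})$ for $j=1,2$, while balancedness of $z$ and of the $w_{i}$ shows $\#I_{j}(x_{k}\cdots x_{l})-\#O_{j}(x_{k}\cdots x_{l})$ equals $0$ in the first possibility and $\#O_{j}(w'_{l})-\#I_{j}(w'_{l})\le 0$ in the second, hence equals $0$ in both. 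So $x_{k}x_{k+1}\cdots x_{l}$ is a non-empty sub-word of $x_{1}x_{2}\cdots x_{2m}$ that is a tsip word and is proper (because $k\ge 2$), contradicting unbreakability. This finishes the argument.

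The colour-counting identities above are routine; the genuinely delicate point, and the one I expect to require the most care, is the case where $z$ ends inside some block $w_{l}$, in which the needed equality $\#I_{j}(x_{k}\cdots x_{l})=\#O_{j}(x_{k}\cdots x_{l})$ emerges only by squeezing the inequality coming from the tsip prefix condition on $x_{k}w_{k}\cdots x_{l}$ against the opposite inequality coming from balancedness of $z$ and of the $w_{i}$ — lose that squeeze and one cannot conclude that $x_{k}\cdots x_{l}$ is a forbidden tsip sub-word of $x_{1}\cdots x_{2m}$. The degenerate possibilities (empty blocks $w_{i}$, $k=2m$, or $z$ ending exactly at the right end of some block $w_{l}$) are all subsumed by the two forms above.
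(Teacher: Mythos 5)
Your proof is correct and follows essentially the same route as the paper's: strip the partial blocks at the ends of the offending tsip sub-word, show each such fragment is balanced in both colours by squeezing the tsip-prefix inequality against the complementary suffix inequality, and conclude that the surviving letters $x_{k}\cdots x_{l}$ form a proper non-empty tsip sub-word of the unbreakable skeleton, a contradiction. The only point of divergence is the right-hand end: the paper truncates a sub-word that overlaps $v$ back to a suffix of $u$ and feeds it into its first case, whereas you exclude such overlap outright via the count $\#I(r)-\#O(r)=-f(k-1)\le -1$, which additionally invokes the ``deque never empty'' half of unbreakability; both resolutions are valid.
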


\begin{proof}
Let $u=x_{1}w_{1}x_{2}w_{2}\ldots x_{2m-1}w_{2m-1}x_{2m}$.
Suppose for the sake of contradiction that some tsip sub-word $w'$ of $w$ is not contained in any $w_{i}$ or $v$, and $w'$ is not a prefix of $w$. 

First we consider the case where $w'$ does not intersect with $v$. Then let $w'=sx_{i}w_{i}x_{i+1}w_{i+1}\ldots x_{j}t$ be a longer tsip sub-word of $u$, where $t$ is a prefix of $w_{j}$ and $s$ is a suffix of $w_{i-1}$. Since $t$ is a suffix of an operation sequence $w'$ and a prefix of another operation sequence, it must be an operation sequence. Moreover, since $t$ is a prefix of a tsip word, $t$ is also a tsip word. Similarly, $s$ is a tsip word. Hence, $s,w_{i},w_{i+1},\ldots,w_{j-1},t$ are all tsip words, as is $w'=sw_{i}x_{i+1}w_{i+1}\ldots w_{j-1}x_{j}t$, so $x_{i+1}x_{i+2}\ldots x_{j}$ is also a tsip word. But this is a contradiction, since $x_{1}x_{2}\ldots x_{2m}$ is unbreakable.

Now we consider the case where $w'$ intersects with $v$. Then we can write $w'=u'v'$, where $u'$ is a suffix of $u$ and $v'$ is a prefix of $v$. Since $w'$ is not a prefix of $w$, the word $u'$ is also not a prefix. Since $w'$ is not contained in $v$, the word $u'$ is non-empty. Since $u'$ is a prefix of an operation sequence and a suffix of an operation sequence, it is also an operation sequence. But then $u'$ satisfies the properties of $w'$ in the first case, a contradiction.\end{proof}

\begin{lemma}\label{opdecu}Given a non-empty operation sequence $w$ there is a unique way to decompose $w$ as \[w=x_{1}w_{1}x_{2}w_{2}\ldots x_{2m-1}w_{2m-1}x_{2m}v,\]
where $m\in\mathbb{Z}_{>0}$, each $x_{i}\in\{I_{1},O_{1},I_{2},O_{2}\}$, such that $x_{1}x_{2}\ldots x_{2m}$ is an unbreakable operation sequence, each $w_{i}$ is a (possibly empty) tsip sub-word, and $v$ is an operation sequence.\end{lemma}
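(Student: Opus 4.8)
The plan is to show that the construction carried out in the proof of Lemma~\ref{opdece} makes no genuine choices: at every stage there is exactly one admissible option, so any decomposition of the stated form must coincide with the one built there. I would split the argument into two parts: first pin down the boundary between the prefix $x_1w_1\cdots x_{2m}$ and the tail $v$, and then pin down the interleaving of the $x_i$'s and $w_i$'s inside that prefix.

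For the first part, let $w=x_1w_1x_2w_2\cdots x_{2m-1}w_{2m-1}x_{2m}v$ be any decomposition as in the statement and put $u=x_1w_1\cdots x_{2m}$. I would first observe that $u$ is an operation sequence: its projection to $\{I,O\}$ is obtained from the Dyck word associated to the operation sequence $x_1\cdots x_{2m}$ by inserting between its letters the Dyck words associated to the tsip sub-words $w_i$, and inserting Dyck words into a Dyck word again yields a Dyck word. I would then show that $u$ is the \emph{shortest} nonempty prefix of $w$ which is an operation sequence. Indeed, a proper nonempty prefix $p$ of $u$ can be written $p=x_1w_1\cdots x_kw_k'$ with $1\le k\le 2m-1$ and $w_k'$ a (possibly empty) prefix of $w_k$; the excess of $I$'s over $O$'s in $p$ then equals that of $x_1\cdots x_k$ plus that of $w_k'$, the latter being $\ge 0$ since $w_k$ is an operation sequence and the former being $\ge 1$ since $x_1\cdots x_{2m}$ is unbreakable and so has a nonempty deque at every intermediate point. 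Hence $p$ is not an operation sequence. Since ``the shortest nonempty prefix of $w$ that is an operation sequence'' depends only on $w$, the word $u$, and therefore also $v$, is the same in every such decomposition.

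For the second part I would work inside the now-fixed word $u$. Clearly $x_1$ is forced to be the first letter of $u$. Proceeding by induction, suppose $x_1,w_1,\ldots,x_i$ have been determined; I claim $w_i$ must be the longest tsip sub-word of $w$ beginning immediately after $x_i$ (which, since by the same reasoning no tsip sub-word starting inside $u$ can reach into $v$, agrees with the choice of ``longest tsip sub-word of $u$'' made in Lemma~\ref{opdece}). If $w_i$ were shorter than some tsip sub-word $W$ starting at the same position, then, as $W$ begins after $x_1$, it is not a prefix of $w$, so Lemma~\ref{opdectsip} places $W$ inside a single block $w_j$ or inside $v$; but the starting position of $W$ lies strictly after all of $x_1w_1\cdots x_{i-1}w_{i-1}$ and strictly before all of $w_{i+1},\ldots,w_{2m-1}$ and before $v$, so the only block that can contain $W$ is $w_i$ itself, forcing $|W|\le|w_i|$ — a contradiction. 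Hence $w_i$ is forced, and then $x_{i+1}$, the next letter of $u$, is forced; the process terminates precisely when $u$ is exhausted, which also pins down $m$. By induction the whole decomposition is forced, hence unique.

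The only point requiring care, and thus the main obstacle, is the first part: establishing that no proper prefix of $u$ is already an operation sequence. This is exactly where the \emph{unbreakability} of $x_1\cdots x_{2m}$ is genuinely used, via the split of such a prefix into a proper prefix of $x_1\cdots x_{2m}$ (whose $I$-excess is at least one) followed by a balanced tsip fragment. Once $u$ and $v$ are identified, the rest is a direct consequence of Lemma~\ref{opdectsip} together with the fact that the blocks $x_1,w_1,\ldots,x_{2m}$ occupy pairwise disjoint, linearly ordered stretches of $u$.
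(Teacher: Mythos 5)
Your proposal is correct and follows essentially the same route as the paper: it identifies $u=x_1w_1\cdots x_{2m}$ as the shortest non-empty prefix of $w$ that is an operation sequence (using unbreakability to give a positive $I$-excess on every proper prefix), and then invokes Lemma~\ref{opdectsip} to force each $w_i$ to be the maximal tsip sub-word at its starting position, so that any decomposition coincides with the one built in Lemma~\ref{opdece}. Your write-up merely spells out the induction on the blocks a little more explicitly than the paper does.
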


\begin{proof}
Let $w=x_{1}w_{1}x_{2}w_{2}\ldots x_{2m-1}w_{2m-1}x_{2m}v$ be one such decomposition of $w$. We just need to show that this is the same as the decomposition constructed in lemma \ref{opdece}.

First we will show that $u=x_{1}w_{1}x_{2}w_{2}\ldots x_{2m-1}w_{2m-1}x_{2m}$ is the shortest operation sub-sequence starting from the start of $w$. Since each $w_{i}$ is an operation sequence, and $x_{1}\ldots x_{2m}$ is an operation sequence, $x_{1}w_{1}x_{2}w_{2}\ldots x_{2m-1}w_{2m-1}x_{2m}$ is an operation sequence. Let $u'$ be a non-empty prefix of $u$ such that $u'\neq u$. Then $u'=x_{1}w_{1}\ldots x_{i-1}w_{i-1}x_{i}t$ for some $i<2m$, where $t$ is a prefix of $w_{i}$. Then, since $x_{1}\ldots x_{2m}$ is unbreakable, there are strictly more $I$s than $O$s in $x_{1}\ldots x_{i}$. Also, each $w_{k}$ contains an equal number of $I$s and $O$s and $t$ contains at least as many $I$s as $O$s. Hence $u'$ contains more $I$s than $O$s, so it is not an operation sequence. Therefore, $u$ is the shortest non-empty prefix of $w$, which is also an operation sequence.

Finally, by the previous lemma, every tsip sub-word of $w$ is either a prefix of $w$ or is contained in one of the words $w_{i}$ or $v$. Hence, each $w_{i}$ is the tsip sub-word of maximal length with that starting point. Therefore, this decomposition is the same as the decomposition constructed in lemma \ref{opdece}\end{proof}

\begin{lemma}\label{canonicaldecomp}  Let $w$ be a non-empty operation sequence, with decomposition $w=x_{1}w_{1}\ldots w_{2m-1}x_{2m}v$. Then $w$ is canonical if and only if the following conditions hold:
\begin{itemize}
\item The (unbreakable) operation sequence $s=x_{1}x_{2}\ldots x_{2m}$ is top happy,
\item Each tsip word $w_{i}$ is standard and outputs eagerly, 
\item $v$ is canonical,
\item If some $x_{i}x_{i+1}$ is either $I_{1}O_{2}$ or $I_{2}O_{1}$, then $w_{i}$ is non-empty.
\end{itemize}
\end{lemma}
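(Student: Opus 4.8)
The plan is to prove both implications by tracking, for each position in the word $w=x_1w_1x_2w_2\cdots x_{2m-1}w_{2m-1}x_{2m}v$, the quantity $\delta=(\#I)-(\#O)$ taken over the prefix of $w$ ending at that position; this is exactly the number of items in the deque at that moment. The one bookkeeping fact I shall use repeatedly is that each $w_i$ and $v$ is an operation sequence with equally many $I$'s as $O$'s, so $\delta$ evaluated just before a letter $x_i$ in $w$ equals $\delta$ evaluated just before $x_i$ in the word $s=x_1x_2\cdots x_{2m}$, while $\delta=0$ just before $v$ in $w$. I shall also use Lemma \ref{opdectsip}: every tsip sub-word of $w$ that is not a prefix of $w$ lies inside one of the $w_i$ or inside $v$. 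Finally, recall that every nonempty operation sequence starts with an $I$, ends with an $O$ and has even length, so each $w_i$ and $v$ is either empty or has length at least $2$.

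\emph{Forward direction.} Suppose $w$ is canonical. A forbidden consecutive pair $I_1O_2$ or $I_2O_1$ occurring inside some $w_i$ or inside $v$ would be such a pair in $w$, so each $w_i$ and $v$ outputs eagerly. A tsip sub-word of some $w_i$ or of $v$ is a non-prefix tsip sub-word of $w$, hence begins with $I_1$, so each $w_i$ and $v$ is standard. An occurrence of $I_2$ or $O_2$ inside $v$ has $\delta\geq2$ before it in $w$, and since $\delta=0$ just before $v$ the same occurrence has $\delta\geq2$ before it within $v$; thus $v$ is top happy, hence canonical. An occurrence of $I_2$ or $O_2$ as a letter $x_i$ has $\delta\geq2$ before it in $w$, which equals $\delta$ before $x_i$ in $s$; so $s$ is top happy. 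And if some $w_i$ were empty with $x_ix_{i+1}\in\{I_1O_2,\,I_2O_1\}$, then $x_ix_{i+1}$ would be a forbidden consecutive pair of $w$, a contradiction; so the fourth condition holds.

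\emph{Reverse direction.} Suppose the four conditions hold. To see that $w$ outputs eagerly, consider any two consecutive letters of $w$: a pair internal to some $w_i$ or to $v$ is not forbidden because $w_i$ and $v$ output eagerly; a pair straddling $x_i$ and a nonempty $w_i$ has an $I$ as its second letter (indeed $w_i$ begins with $I_1$ by standardness); a pair straddling a nonempty $w_i$ and $x_{i+1}$ has an $O$ as its first letter, and a pair straddling $x_{2m}$ and a nonempty $v$ has an $I$ as its second letter; and a pair $x_ix_{i+1}$ with $w_i$ empty is not forbidden by the fourth condition --- so $w$ contains no forbidden pair. To see that $w$ is standard, take a tsip sub-word of $w$: if it is not a prefix it lies inside a $w_i$ or inside $v$ by Lemma \ref{opdectsip} and so begins with $I_1$ by standardness of $w_i$, respectively $v$; if it is a prefix of $w$ it begins with $x_1$, and since $s$ is top happy with $\delta=0$ before $x_1$ we get $x_1\neq I_2$, while $x_1$ is an $I$ as the first letter of an operation sequence, so $x_1=I_1$.

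It remains to check that $w$ is top happy. Consider an occurrence of $I_2$ or $O_2$ in $w$. If it is some letter $x_i$, then $\delta$ before it in $w$ equals $\delta$ before $x_i$ in $s$, which is $\geq2$ since $s$ is top happy. If it lies inside $v$, then $\delta$ before it in $w$ equals $\delta$ before it within $v$, which is $\geq2$ since $v$ is top happy. If it lies inside some $w_i$ (so $i\leq2m-1$), then $\delta$ before it in $w$ equals $\big((\#I)-(\#O)\big)(x_1\cdots x_i)+\delta_i$, where $\delta_i$ is $(\#I)-(\#O)$ on the prefix of $w_i$ ending just before this letter: the first summand is $\geq1$ because $s$ is unbreakable, so its deque is never empty after an intermediate step; and $\delta_i\geq1$ too, which is automatic for an $O_2$ (the prefix of $w_i$ up to and including it has nonnegative $\delta$) and for an $I_2$ follows from standardness of $w_i$ --- if $\delta_i=0$ then, since $(\#I_1)-(\#O_1)$ and $(\#I_2)-(\#O_2)$ on that prefix are both nonnegative and sum to $0$, they both vanish, whence the suffix of $w_i$ from that $I_2$ onward is a tsip sub-word of $w_i$ beginning with $I_2$, which cannot equal $w_i$ (as $w_i$ is standard it begins with $I_1$) and so contradicts standardness. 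Hence $\delta\geq2$ before this letter; $w$ is top happy, and therefore canonical. The only genuinely delicate step is this last one --- the claim that inside a standard tsip word an $I_2$ can appear only when its local deque is nonempty --- which rests on checking that the relevant suffix of $w_i$ really is a tsip word; the rest is a routine if somewhat lengthy case analysis using only the bookkeeping identity for $\delta$ and Lemma \ref{opdectsip}.
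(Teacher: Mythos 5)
Your proof is correct and takes essentially the same route as the paper's: both directions are handled by locating each letter (or consecutive pair, or tsip sub-word) in $s$, in some $w_i$, or in $v$, using Lemma \ref{opdectsip} for standardness and the unbreakability of $s$ together with standardness of the $w_i$ for the top-happy condition. The only difference is presentational --- you track the deque size $\delta$ directly where the paper phrases the same argument via red steps from heights $0$ and $1$ of the associated bi-coloured Dyck path --- and you usefully make explicit the sub-step (left implicit in the paper) that a letter of a standard tsip word occurring at local height $0$ must be $I_1$.
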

\begin{proof}
If $w$ is canonical, then $w$ is top happy, so $s$ is top happy and $v$ is top happy. Since $w$ is standard and outputs eagerly, any sub operation sequence of $w$ is also standard and outputs eagerly; in particular this includes each $w_{i}$, as well as $v$. Hence $v$ is canonical. Finally the fourth condition follows immediately from the fact that $w$ outputs eagerly.

Now we will assume the four conditions and prove that $w$ is canonical. A sub-word of the form $IO$ can only appear in $w$ inside one of the sub-words $w_{i}$ or $v$ or as $x_{i}x_{i+1}$, where $w_{i}$ is empty. The conditions clearly make it impossible for such a sub-word to be either $I_{1}O_{2}$ or $I_{2}O_{1}$, so $w$ outputs eagerly. Now we consider $w$ to be a bi-coloured Dyck path. Since $s$ and $v$ are top happy, every step from height 0 or 1 in $w$ which comes from $s$ or $v$ is red. Any step from height 0 or 1 which comes from some $w_{i}$, must be at height 1 in $w$ and height 0 in $w_{i}$. Then this step must be red since $w_{i}$ is standard. Finally, by Lemma \ref{opdectsip}, any tsip sub-word of $w$ is either a prefix of $w$, contained in $v$ or contained in one of the words $w_{i}$. Hence, since each $w_{i}$ and $v$ are standard, and $w$ begins with $I_{1}$, the word $w$ is also standard. Therefore, $w$ is canonical.
\end{proof}

Now recall that the generating function $M(a,u,x)$ is the generating function for top happy, unbreakable operation sequences, counted by half-length, conjugate to $u$, number of appearances of a sub-word $I_{1}O_{2}$ or $I_{2}O_{1},$ conjugate to $a$, and number of times when there is only one element in the deque, conjugate to $x$.
\begin{lemma} The generating function $M$ satisfies the equation
\begin{equation}\label{M}T(a,u,x)=M\left(1+\frac{a-1}{Q},uQ^2,\frac{Q_{1}}{Q}x\right)\frac{T}{Q}+1\end{equation}
\end{lemma}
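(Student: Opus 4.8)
The plan is to establish a bijective/decompositional identity between top happy unbreakable operation sequences (counted by $M$) and the bi-coloured Dyck paths counted by $T$, where the ``bridging'' object is a top happy unbreakable operation sequence with tsip sub-words inserted into its gaps. I would first reinterpret both sides combinatorially. On the $T$ side, recall that $T(a,u,x)$ counts bi-coloured Dyck paths in which every step from height $0$ or $1$ is red, with weights: $u$ per half-length, $a$ per multicoloured peak, $x$ per vertex at height $1$. On the $M$ side, $M(a,u,x)$ counts top happy unbreakable operation sequences with weights: $u$ per half-length, $a$ per occurrence of $I_1O_2$ or $I_2O_1$, $x$ per time the deque holds exactly one element. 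The identity to be proven, $T = M(1+(a-1)/Q,\, uQ^2,\, (Q_1/Q)x)\cdot T/Q + 1$, should be read as: a non-empty bi-coloured Dyck path of the required type decomposes uniquely as (an outermost red up-step)(an interior bi-coloured Dyck path with every step from height $0$ red, i.e.\ one counted by $T_1$-type data)(an outermost red down-step)(a remaining path of the same type as the original, counted by $T$). This is exactly the recursive decomposition used to prove the formula for $T$, but now I want to recognise the ``$T_1$-type'' middle piece not as a single Dyck object but as the image of a top happy unbreakable operation sequence with tsip-words stuffed into its height-$1$ vertices.

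The key steps, in order: (1) Identify a top happy unbreakable operation sequence $s=x_1x_2\cdots x_{2m}$ with a bi-coloured Dyck path in which every step from height $0$ or $1$ is red --- the ``top happy'' condition is precisely what forces steps at heights $0,1$ to be red (an $I_2$ or $O_2$ at such a height would violate top happiness), and ``unbreakable'' forces the path to touch height $0$ only at its endpoints, i.e.\ it is a single arch with everything at height $\ge 1$ inside. Under this identification, a subword $I_1O_2$ or $I_2O_1$ of $s$ corresponds to a multicoloured peak, so the variable $a$ matches up; a moment with exactly one element in the deque corresponds to a vertex at height $1$ in this arch, matching $x$. (2) By Lemma \ref{canonicaldecomp} (or rather its underlying decomposition from Lemma \ref{opdecu}), each interior height-$1$ vertex of $s$ is a ``slot'' into which one may insert an arbitrary standard tsip word that outputs eagerly --- and such tsip words are exactly what $P$-type/quarter-plane-loop generating functions count. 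More precisely, inserting a sequence of such tsip words at a given slot contributes a geometric series; the relevant generating function for a single insertable block is governed by $U(a,u)=1-1/Q$, and the bookkeeping for ``every letter after an equal number of $I$s and $O$s is $I_1$'' (the standardness of the inserted tsip words) plus counting returns to an empty sub-deque is exactly what defines $Q_1(a,u,x)$. (3) Track the substitutions: inserting tsip material multiplies each half-length unit coming from the original $s$-step by a factor of $Q^2$ (one $Q$ for material that can be appended ``after'' and reorganisations on each side --- this needs care, see below), which is why $u\mapsto uQ^2$; the weight $a$ on a peak of $s$ gets renormalised to $1+(a-1)/Q$ because a former $I_1O_2$/$I_2O_1$ adjacency may or may not survive as such once a (possibly empty) tsip word $w_i$ is inserted between $x_i$ and $x_{i+1}$, and the fourth bullet of Lemma \ref{canonicaldecomp} controls exactly when the adjacency is ``broken''; the weight $x$ on a height-$1$ vertex becomes $(Q_1/Q)x$ because at each such vertex one inserts a $Q_1$-counted block (with its own internal $x$-statistic) rather than just marking the vertex, and the normalisation by $Q$ removes the double-counting of the ambient loop structure. (4) Assemble: the outermost red up-step and red down-step contribute $u$; the middle piece, being a top happy unbreakable sequence with tsip insertions viewed as a Dyck path with every step from height $0$ red, is counted by $M$ evaluated at the substituted arguments, times a factor of $1/Q$ converting between the ``one insertable block per slot'' and ``arbitrary sequence of blocks'' conventions; the trailing piece is again $T$; and the empty path contributes the $+1$. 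This yields precisely equation \eqref{M}.

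The main obstacle I anticipate is step (3): getting the three substitutions $a\mapsto 1+(a-1)/Q$, $u\mapsto uQ^2$, $x\mapsto (Q_1/Q)x$, together with the overall prefactor $T/Q$ and the summand $+1$, exactly right --- in particular disentangling which factors of $Q$ and $Q_1$ arise from (i) the geometric series of insertable tsip blocks at a slot, (ii) the reflection symmetry halving that relates $Q_1$ to $Q$, and (iii) the convention converting between $T$ (steps from height $0$ or $1$ red) and $T_1$ (steps from height $0$ red) inside the recursion. The cleanest route is probably to first prove the identity with $M$ replaced by a ``decorated'' generating function $\widetilde M$ for top happy unbreakable sequences-with-tsip-insertions directly --- a routine symbolic-method computation using Lemma \ref{opdecu} and the already-derived formulas for $T_1$, $T_2$, $Q$, $Q_1$ --- and only then observe that $\widetilde M(a,u,x)$ equals $M$ with the stated substituted arguments, by reading off which elementary block generating function ($Q$, $Q_1$, or a power of $u$) decorates each of the three statistics of a bare top happy unbreakable sequence. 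Verifying the final formula is then a matter of substituting the explicit expressions for $T$, $T_1$, $T_2$, $Q$, $Q_1$ and checking the algebraic identity, which I would do but not reproduce in detail here.
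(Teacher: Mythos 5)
Your overall strategy is the paper's: decompose each non-empty word counted by $T$ via Lemma \ref{opdecu}, fix the top happy unbreakable skeleton $s=x_{1}\cdots x_{2m}$, fill the gaps with tsip words, recognise the trailing factor $v$ as contributing $T$, and read off how the three statistics of $s$ are transformed by the insertions. Your step (1) (top happy $\leftrightarrow$ red at heights $0,1$; the corner weight $\leftrightarrow$ $I_{1}O_{2}$/$I_{2}O_{1}$ adjacencies; height-$1$ vertices $\leftrightarrow$ moments with one element in the deque) is exactly how the paper reinterprets $T$ as counting top happy operation sequences.

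However, the bookkeeping you offer for the substitutions is not correct as stated, and since you yourself flag this as the crux, it is a genuine gap. The slots are not only the height-$1$ vertices of $s$: a (possibly empty) tsip word $w_{i}$ is inserted after each of $x_{1},\dots,x_{2m-1}$, so there are $2m-1$ slots in all; the $r$ slots at height $1$ receive words counted by $Q_{1}$ (top-happiness of $w$ forces the steps of $w_i$ from its own ground level to be subscript $1$ there), and the remaining $2m-1-r$ slots receive arbitrary tsip words counted by $Q$. A skeleton $s$ with monomial $a^{q}u^{m}x^{r}$ in $M$ then contributes $u^{m}(xQ_{1})^{r}Q^{2m-r-q-1}(Q+a-1)^{q}T$ to $T$, and the three substitutions together with the prefactor $T/Q$ fall out of rewriting this as $\bigl(1+\frac{a-1}{Q}\bigr)^{q}(uQ^{2})^{m}\bigl(\frac{Q_{1}}{Q}x\bigr)^{r}\frac{T}{Q}$. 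In particular: the $Q^{2}$ in $u\mapsto uQ^{2}$ comes from there being (essentially) two slots per half-length unit of $s$, not from ``reorganisations on each side''; the global $1/Q$ comes from there being $2m-1$ rather than $2m$ slots (the position after $x_{2m}$ is occupied by $v$), not from converting between ``one block per slot'' and ``a sequence of blocks'' --- each slot already receives an arbitrary loop, i.e.\ an arbitrary sequence of primitive loops, which is internal to $Q$; and the $a$-substitution arises because at the $q$ slots flanked by an $I_{1}O_{2}$ or $I_{2}O_{1}$ pair the empty insertion must be weighted $a$ rather than $1$, whence $Q\mapsto Q+a-1$ there (and only there, since such adjacencies cannot occur at height $1$ by top-happiness). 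Finally, your closing step --- substituting explicit expressions and ``checking the algebraic identity'' --- is not available: $M$ has no independent closed form, so \eqref{M} cannot be verified algebraically; it must be established by precisely the combinatorial accounting above.
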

\begin{proof}We first note that given an operation sequence $w$, there is a corresponding bi-coloured Dyck path, formed by replacing each $I_{1}$ with a red up-step, each $I_{2}$ with a blue up-step, each $O_{1}$ with a red down-step, and each $O_{2}$ with a blue down-step. Then the condition that the steps from height 0 or 1 in the Dyck paths counted by $T$ are all red is equivalent to the condition that the corresponding operation sequence is top happy. Hence, we can consider $T(a,u,x)$ to be the generating function for top happy operation sequences, counted by half-length, conjugate to $u$, number of consecutive steps $I_{1}O_{2}$ or $I_{2}O_{1},$  conjugate to $a$, and number of times in the procedure that the deque contains exactly one element, conjugate to $x$.

For each non-empty operation sequence $w$ counted by $T(a,u,x)$ we consider the decomposition $w=x_{1}w_{1}x_{2}w_{2}\ldots x_{2m-1}w_{2m-1}x_{2m}v$ described in lemma \ref{opdecu}. In particular, we consider the contribution to $T$ from all words $w$ with a given unbreakable operation sequence $s=x_{1}x_{2}\ldots x_{2m}$. Since $w$ is top happy, $s$ is also top happy, so we will only consider the top happy, unbreakable operation sequences $s$, which are exactly the operation sequences counted by $M$. Let $q,r$ be the number of consecutive $I_{1}O_{2}$ or $I_{2}O_{1}$ steps and number of times in the procedure given by $s$ that the deque contains exactly one element respectively. So the contribution of $s$ to $M(a,u,x)$ is $a^{q}u^{m}x^{r}$. 

Now we calculate the contribution to $T$ from all the words of the form $w$, with $s=x_{1}x_{2}\ldots x_{2m}$ fixed. Call this the contribution of $s$ to $T$. Each $w_{i}$ which begins (and ends) at height 1 in the bi-coloured Dyck path corresponding to $w$ can be any tsip-word, whose steps from height 0 in its bi-coloured Dyck path are all red. Hence the possible words $w_{i}$ are exactly those counted by $Q_{1}$. Each $w_{i}$ which begins and ends with the deque containing more than one element can be any tsip-word, and these are enumerated by $Q$. Also, the possible words $v$ are exactly those counted by $T$. Since the half-length of $w$ is $m$ plus the sum of the half-lengths of these sub-words, the contribution of $s$ to $T(1,u,1)$ is
\[u^{m}Q_{1}(1,u,1)^{r}Q(1,u)^{2m-r-1}T(1,u,1).\]
Now, each vertex at height 1 in the bi-coloured Dyck path corresponding to $w$ occurs in one of the words $w_{i}$ counted by $Q_{1}$, or in $v$. The number of these vertices in any word $w_{i}$ counted by $Q_{1}$, is equal to the number of vertices in the bi-coloured Dyck path of $w_{i}$ at height 0, which is 1 more than the power of $x$ in the contribution of $w_{i}$ to $Q_{1}(a,u,x)$. Since all other vertices at height 1 appear in $v$, the contribution of $s$ to $T(1,u,x)$ is equal to 
\[u^{m}(xQ_{1}(1,u,x))^{r}Q(1,u)^{2m-r-1}T(1,u,x).\]
Now we just need to consider the number of sub-words $I_{1}O_{2}$ and $I_{2}O_{1}$ in $w$. If we only consider the sub-sequences which occur in one of the words $w_{i}$ or $v$, we would get the contribution
\[u^{m}(xQ_{1}(a,u,x))^{r}Q(a,u)^{2m-r-1}T(a,u,x).\]
The only other situation where one of the these sub-sequences occurs is when one of the words $w_{i}$ is empty, and the surrounding letters $x_{i}x_{i+1}$ form one of these sub-words. Note that this cannot happen at height 1, since $w$ is top happy, so it only occurs in the case where $w_{i}$ is counted by $Q$. This case is counted as 1 in $Q(a,u)$ in the equation above, but it should be counted as $a$. Moreover, this is relevant to exactly $q$ of the words $w_{i}$, exactly those where $x_{i}x_{i+1}$ is either the word $I_{1}O_{2}$ or $I_{2}O_{1}$. Hence, the contribution of $s$ to $T(a,u,x)$ is
 \[u^{m}(xQ_{1}(a,u,x))^{r}Q(a,u)^{2m-r-q-1}(Q(a,u)+a-1)^{q}T(a,u,x).\]
Therefore, any top happy, unbreakable operation sequence $s$ which contributes $a^{q}u^{m}x^{r}$ to $M(a,u,x)$, contributes \[\left(1+\frac{a-1}{Q}\right)^{q}\left(uQ^{2}\right)^{m}\left(\frac{Q_{1}}{Q}x\right)^{r}\frac{T}{Q}\] to $T(a,u,x)$, and this accounts for all of $T(a,u,x)$ except for the 1 coming from the empty word. Hence, 
\[T(a,u,x)=M\left(1+\frac{a-1}{Q},uQ^{2},\frac{Q_{1}}{Q}x\right)\frac{T}{Q}+1.\]
\end{proof}

\begin{lemma} The generating function $D$ satisfies the equation
\begin{equation}\label{R}D(t)=M\left(1-\frac{1}{P},tP^2,1\right)\frac{D}{P}+1.\end{equation}
\end{lemma}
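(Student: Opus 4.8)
The plan is to mirror the proof of the previous lemma (the one giving equation~\eqref{M}), specialising the three-variable identity for $T$ to the situation that counts canonical operation sequences, i.e.\ the ones enumerated by $D(t)$. The key observation is that Lemma~\ref{canonicaldecomp} tells us exactly how a canonical operation sequence decomposes: $w=x_{1}w_{1}x_{2}w_{2}\ldots x_{2m-1}w_{2m-1}x_{2m}v$ with $s=x_{1}\ldots x_{2m}$ top happy and unbreakable, each $w_{i}$ a standard tsip word which outputs eagerly, $v$ canonical, and the ``eager'' compatibility condition at the junctions $x_{i}x_{i+1}$. Since $P(t)$ is precisely the generating function for standard tsip words which output eagerly (counted by half-length), the role played by $Q(a,u)$ and $Q_{1}(a,u,x)$ in the proof of~\eqref{M} is now played by $P(t)$ (and a corresponding restricted series), evaluated with the corner-weight variable $a$ specialised so as to forbid the bad junctions.

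First I would set up the substitution. In the proof of~\eqref{M}, the generating function for the $w_{i}$ sitting at height~$1$ of the bi-coloured Dyck path was $Q_{1}$, while those sitting strictly above height~$1$ contributed $Q$; here, by Lemma~\ref{canonicaldecomp}, the $w_{i}$ must instead be \emph{standard tsip words which output eagerly}, which are counted by $P(t)$ regardless of their height, so both roles collapse to $P$. Consequently the factor $x^{r}$ tracking height-$1$ vertices disappears (equivalently, we set the third argument of $M$ to $1$), the factor $uQ^{2}$ becomes $tP^{2}$, and the overall $T/Q$ prefactor becomes $D/P$ (since the sequence of ``leftover'' tails $v$ that are themselves canonical is generated by $D$, just as the tails generated by $T$ appeared with a $T$ in~\eqref{M}).

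The remaining point is the weight on corners, i.e.\ the first argument of $M$. Exactly as in the proof of~\eqref{M}, a sub-word $I_{1}O_{2}$ or $I_{2}O_{1}$ of $w$ either lies inside one of the $w_{i}$ or $v$ (harmless, since those output eagerly, hence contain no such sub-word at all), or arises at a junction $x_{i}x_{i+1}$ with $w_{i}$ empty. But the fourth bullet of Lemma~\ref{canonicaldecomp} says a canonical $w$ \emph{forbids} exactly this: if $x_{i}x_{i+1}$ is $I_{1}O_{2}$ or $I_{2}O_{1}$ then $w_{i}$ is non-empty. So a top happy unbreakable $s$ contributing $a^{q}u^{m}x^{r}$ to $M$ contributes to $D$ only through those fillings in which each of the $q$ offending junctions has a \emph{non-empty} tsip word inserted; the generating function for a non-empty standard-eager tsip word being $P-1$, each such junction contributes a factor $(P-1)/P$ relative to an unconstrained slot, i.e.\ the substitution $a\mapsto 1-\tfrac1P$ is forced, precisely matching $1+\tfrac{a-1}{Q}$ from~\eqref{M} with $Q$ replaced by $P$ and $a=0$. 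Assembling these substitutions into~\eqref{M} gives
\begin{equation*}
D(t)=M\!\left(1-\frac{1}{P},\,tP^{2},\,1\right)\frac{D}{P}+1,
\end{equation*}
as claimed.

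The main obstacle I anticipate is bookkeeping rather than conceptual: one must be careful that $P(t)$ really is the correct generating function for \emph{every} $w_{i}$-slot (height $1$ or higher) and for the tail $v$, which requires invoking Lemma~\ref{canonicaldecomp} in both directions (a decomposition is canonical iff each piece satisfies the stated local conditions) together with the uniqueness of the decomposition from Lemma~\ref{opdecu}, and that the height-$1$ vertex count genuinely drops out (so that the $x$-variable can be set to $1$ consistently). Once those identifications are pinned down, the computation is a direct specialisation of the previous lemma's argument.
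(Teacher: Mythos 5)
Your proposal is correct and follows essentially the same route as the paper: both invoke Proposition \ref{dequeops} and Lemma \ref{canonicaldecomp} to identify $D(t)$ with the generating function for decompositions $x_{1}w_{1}\ldots x_{2m}v$, fill each of the $2m-1$ slots with a standard eager tsip word (generating function $P$, or $P-1$ at the $q$ junctions forming $I_{1}O_{2}$ or $I_{2}O_{1}$) and the tail with $D$, yielding the contribution $(1-\tfrac{1}{P})^{q}(tP^{2})^{m}\tfrac{D}{P}$ per monomial $a^{q}u^{m}x^{r}$ of $M$. Your observation that the height-$1$ distinction (and hence the $x$-variable) collapses because standardness forces every $w_{i}$ to be counted by the same series $P$ is exactly the point that lets the paper set the third argument of $M$ to $1$.
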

\begin{proof}
Using Proposition \ref{dequeops}, we know that $D(t)$ is the generating function for canonical operation sequences, counted by half-length. Using Lemma \ref{canonicaldecomp}, we see that $D(t)$ is the generating function for words $w$ which decompose as $w=x_{1}w_{1}\ldots w_{2m-1}x_{2m}v$, where
\begin{itemize}
\item The (unbreakable) operation sequence $s=x_{1}x_{2}\ldots x_{2m}$ is top happy,
\item Each tsip word $w_{i}$ is standard and outputs eagerly, 
\item $v$ is canonical,
\item If some $x_{i}x_{i+1}$ is either $I_{1}O_{2}$ or $I_{2}O_{1}$, then $w_{i}$ is non-empty.
\end{itemize}

As in the proof of the previous lemma, we will consider the contribution of any given top happy, unbreakable operation sequence $s=x_{1}x_{2}\ldots x_{2m}$ to $D$, assuming that $s$ is counted by the monomial  $a^{q}u^{m}x^{r}$ in $M(a,u,x)$. For each $i$, the word $w_{i}$ can be any standard tsip word which outputs eagerly, except that it can't be empty if $x_{i}x_{i+1}$ is $I_{1}O_{2}$ or $I_{2}O_{1}$ and there are $q$ such values of $i$. Also, $v$ can be any canonical operation sequence. Recall that standard tsip words which output eagerly are counted by $P$. Therefore, the contribution of $s$ to $D(t)$ is
\[t^{m}P^{2m-q-1}(P-1)^{q}D=\left(1-\frac{1}{P}\right)^{q}(tP^2)^{m}\frac{D}{P}.\]
Hence,
\[D(t)=M\left(1-\frac{1}{P},tP^2,1\right)\frac{D}{P}+1.\]
\end{proof}

\begin{thm}\label{bigthm} Let $D(t)$ be the length generating function for permutations which are sortable by a deque, and let $P(t)$ be the length generating function for permutations which are sortable by two stacks in parallel. Then $D$ and $P$ satisfy the following two (equivalent) equations:
\begin{equation}\label{SinR}P(t)=\frac{(D-1)(D-t-1)}{2t(D-1-Dt)}\end{equation}
and
\begin{equation}\label{RinS}2D(t)=2+t+2Pt-2Pt^2-t\sqrt{1-4P+4P^2-8P^2t+4P^2t^2-4Pt}.\end{equation}\end{thm}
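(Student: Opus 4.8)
The plan is to feed a single well-chosen specialization into the two functional equations \eqref{M} and \eqref{R}, using \eqref{corS} to pin down the value of $Q$, and then to clear the resulting square root. First I would set $a_{0}=\tfrac1P-1$ and $u_{0}=\tfrac{tP^{2}}{(1-2P)^{2}}$, so that \eqref{corS} gives $Q(a_{0},u_{0})=2P-1$. With this choice one checks directly that $1+\tfrac{a_{0}-1}{Q(a_{0},u_{0})}=1-\tfrac1P$ and $u_{0}Q(a_{0},u_{0})^{2}=tP^{2}$, so the first two arguments of $M$ on the right of \eqref{M} already coincide with those of $M$ in \eqref{R}. To force the third argument to equal $1$, I would choose $x_{0}$ by $\tfrac{Q_{1}(a_{0},u_{0},x_{0})}{Q(a_{0},u_{0})}\,x_{0}=1$; by \eqref{Q1} this is linear in $x_{0}$ and solves to $x_{0}=\tfrac{2Q(a_{0},u_{0})}{1+Q(a_{0},u_{0})}=\tfrac{2P-1}{P}$. (Here $a_{0},u_{0},x_{0}$ are power series in $t$ with $u_{0}$ of positive order, so the substitution into $Q,Q_{1},M,T$ is legitimate.)

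Substituting $(a_{0},u_{0},x_{0})$ into \eqref{M} and solving for $M$ then gives
\[M\Bigl(1-\tfrac1P,\,tP^{2},\,1\Bigr)=(2P-1)\,\frac{T(a_{0},u_{0},x_{0})-1}{T(a_{0},u_{0},x_{0})},\]
whereas \eqref{R} solved for $M$ reads $M\bigl(1-\tfrac1P,\,tP^{2},\,1\bigr)=\tfrac{P(D-1)}{D}$. The next step is to compute $T(a_{0},u_{0},x_{0})$ from the closed form \eqref{T}. The radicand there equals $1-4(3+a)u+4(1-a)^{2}u^{2}$, which under the substitution becomes $R^{2}/(2P-1)^{2}$ with $R^{2}:=1-4P+4P^{2}-8P^{2}t+4P^{2}t^{2}-4Pt$, taking the branch with $R(0)=1$ (consistent with $P(0)=1$ and $2P(0)-1=1>0$). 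Since the numerator of \eqref{T} exceeds its denominator by $4x_{0}u_{0}=\tfrac{4tP}{2P-1}$, and the numerator itself simplifies to $\tfrac{2P+2tP+1+R}{P}$, one gets $\tfrac{T-1}{T}=\tfrac{4tP^{2}}{(2P-1)(2P+2tP+1+R)}$. Combining this with the two formulas for $M$ yields
\[\frac{D-1}{D}=\frac{4tP}{2P+2tP+1+R}.\]

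To remove the radical I would rationalize using the identity $(2P+2tP+1)^{2}-R^{2}=8P(1+t+2tP)$, which produces the companion relation $\tfrac{D-1}{D}=\tfrac{t(2P+2tP+1-R)}{2(1+t+2tP)}$; solving each of these two relations for $R$ and equating eliminates $R$, and after simplification everything collapses to \eqref{SinR}. Finally, \eqref{SinR} rearranges to the quadratic $D^{2}-(2+t+2tP-2t^{2}P)D+(1+t+2tP)=0$ in $D$, whose discriminant is exactly $t^{2}R^{2}$, so $2D=(2+t+2tP-2t^{2}P)\pm tR$; comparing the coefficient of $t$ using $d_{1}=1$ and $p_{0}=1$ selects the minus sign, which is \eqref{RinS}. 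This establishes that \eqref{SinR} and \eqref{RinS} are equivalent and both hold.

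The bulk of the work, and the only place where anything could go wrong, is the middle stage: substituting into the explicit formula \eqref{T} and carrying out the (lengthy but entirely mechanical) simplifications, together with the elimination of $R$ between the two rational relations so that all square roots vanish and the clean form \eqref{SinR} emerges. The one subtlety to be careful about is the choice of branch of the square root, which is resolved by evaluating at $t=0$, where $P(0)=1$, $2P(0)-1=1>0$, and $R(0)=1$.
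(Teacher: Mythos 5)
Your proposal is correct and follows essentially the same route as the paper: substitute the specialization $a=\tfrac1P-1$, $u=\tfrac{tP^2}{(1-2P)^2}$, $x=2-\tfrac1P$ (your $x_0=\tfrac{2P-1}{P}$ is the same value) into \eqref{M} via \eqref{Q1} and \eqref{corS}, use \eqref{R} to express $M$ in terms of $D$ and $P$, and then equate with the closed form \eqref{T}; the paper compresses the final algebra into "rearranging gives the desired result," whereas you carry it out explicitly and correctly, including the branch and sign checks.
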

\begin{proof}
First we substitute \eqref{Q1} into \eqref{M}, to remove $Q_{1}$:
\[T(a,u,x)=M\left(1+\frac{a-1}{Q},uQ^2,\frac{2x}{2Q-xQ+x}\right)\frac{T}{Q}+1.\]
Now we combine this with \eqref{corS} to get
\[T\left(\frac{1}{P}-1,\frac{tP^{2}}{(1-2P)^{2}},x\right)=M\left(1-\frac{1}{P},tP^2,\frac{x}{2P-1-Px+x}\right)\frac{T}{2P-1}+1.\]
Therefore,
\[T\left(\frac{1}{P}-1,\frac{tP^{2}}{(1-2P)^{2}},2-\frac{1}{P}\right)=M\left(1-\frac{1}{P},tP^2,1\right)\frac{T}{2P-1}+1.\]
Using \eqref{R}, we can write $M$ in terms of $D$ and $P$. So
\[T\left(\frac{1}{P}-1,\frac{tP^{2}}{(1-2P)^{2}},2-\frac{1}{P}\right)=\frac{(D-1)P}{D}\cdot\frac{T}{2P-1}+1.\]
Solving for $T$ gives the relation
\begin{equation}\label{TRS}T\left(\frac{1}{P}-1,\frac{tP^{2}}{(1-2P)^{2}},2-\frac{1}{P}\right)=\frac{(2P-1)D}{DP+P-D}.\end{equation}
Finally, using the formula \eqref{T} for $T$ and rearranging gives the desired result.
\end{proof}

\section{Analysis}

The main purpose of this section is to reduce the problem of showing that the generating functions $P$ and $D$ have the same radius of convergence to a few conjectures about the generating function $Q(a,u)$ for quarter plane loops. We will also give some evidence for the stronger conjecture below.

\begin{conj}\label{conjas} For $n\in\mathbb{Z}_{\geq0}$, let $p_{n}$ be the number of permutations of size $n$ which are sortable by two stacks in parallel and let $d_{n}$  be the number of permutations of size $n$ which are sortable by a deque. We conjecture based on  theorem \ref{bigthm} that $p_{n}\sim const \cdot \mu^{n}\cdot n^{\gamma}$ and $d_{n}\sim const \cdot  \mu^{n}\cdot n^{-3/2}$ for some constants $\mu$ and $\gamma$.\end{conj}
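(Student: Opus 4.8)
\emph{Plan.} The idea is to read off both asymptotic statements from Theorem~\ref{bigthm} by Flajolet--Odlyzko singularity analysis, treating $D$ as the explicit function of $P$ and $t$ given by \eqref{RinS}. The statement about $d_n$ is, up to upgrading the error term to a Flajolet--Odlyzko $\Delta$-domain, the content of Theorem~\ref{mediumthm}; the statement about $p_n$ is the genuinely conjectural part and rests on the (still conjectural) singular structure of $P$ at $t=t_c$. Granting Conjectures 10--12 of \cite{AB15} together with the extra hypothesis used for Theorem~\ref{mediumthm}, one first fixes the geometry: $P$ and $D$ are analytic on $|t|<t_c$ with a common singularity at the positive real point $t=t_c=1/\mu$, and since their coefficients are positive and aperiodic this is the only singularity on $|t|=t_c$. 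One also uses the value $P(t_c)=\tfrac{1}{2(1-\sqrt{t_c})^{2}}$, which should emerge from \eqref{corS} by tracking the point at which the inner argument $tP^{2}/(1-2P)^{2}$ reaches the radius of convergence $u_c\!\left(\tfrac1P-1\right)$ of $Q(a,\cdot)$ and invoking the conjectured square-root singularity of $Q$ in its second variable.

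\emph{Singular expansion of $P$.} The conjectural ingredient for the first assertion is that, in a $\Delta$-domain at $t_c$,
\[
P(t)=P(t_c)-t_cP'(t_c)\,(1-t/t_c)+P_1\,(1-t/t_c)^{\gamma_1}+o\!\left((1-t/t_c)^{\gamma_1}\right),
\]
with $\gamma_1\in(1,2)$ \emph{not} an integer and $P_1\neq0$. Granted this, the transfer theorem gives $p_n=[t^n]P(t)\sim \dfrac{P_1}{\Gamma(-\gamma_1)}\,t_c^{-n}\,n^{-\gamma_1-1}$, i.e.\ $p_n\sim const\cdot\mu^{n}\cdot n^{\gamma}$ with $\mu=1/t_c$ and $\gamma=-\gamma_1-1$. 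Identifying $\gamma_1$ itself requires the finer local analysis of \eqref{corS}, in particular the opening angle of the cone attached to the walks counted by $Q\!\left(\tfrac1P-1,\cdot\right)$; this is precisely where Raschel's formula for $g(a)$ would enter.

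\emph{Transfer to $D$.} Rewrite \eqref{RinS} as $2D=2+t+2tP-2t^{2}P-t\sqrt{\mathcal{R}(P(t),t)}$, where
\[
\mathcal{R}(P,t)=(1-2P)^{2}-4Pt(1+2P-Pt)=1-4P+4P^{2}-4Pt-8P^{2}t+4P^{2}t^{2}.
\]
The key algebraic observation --- the ``remarkable cancellation'' of the Introduction --- is that this radicand vanishes at the singularity: substituting $P(t_c)=\tfrac{1}{2(1-\sqrt{t_c})^{2}}$ one checks $\mathcal{R}(P(t_c),t_c)=0$, while $\partial_P\mathcal{R}\big|_{(P(t_c),t_c)}=8\sqrt{t_c}\neq0$, so along the curve $t\mapsto(P(t),t)$ the radicand has a \emph{simple} zero at $t_c$,
\[
\mathcal{R}(P(t),t)=c\,(1-t/t_c)+c'\,(1-t/t_c)^{\gamma_1}+\cdots,\qquad c\neq0,
\]
the non-vanishing of $c$ (needed for a true square-root, rather than a $(1-t/t_c)^{\gamma_1/2}$, singularity) being the remaining analytic fact to be verified; it is confirmed numerically by the observed clean square root and by the secondary deque exponent $\gamma_3=\gamma_1-\tfrac12\approx0.973$. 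Since $\tfrac12<\gamma_1$, the term $-t\sqrt{\mathcal{R}}\sim-\sqrt{t_c\,c}\,(1-t/t_c)^{1/2}$ dominates every non-analytic contribution inherited from $P$, so $D(t)=D(t_c)+D_1\,(1-t/t_c)^{1/2}+o\!\left((1-t/t_c)^{1/2}\right)$ in a $\Delta$-domain --- this is Theorem~\ref{mediumthm} with a transferable error term --- and singularity analysis yields $d_n=[t^n]D(t)\sim \dfrac{-D_1}{2\sqrt{\pi}}\,t_c^{-n}\,n^{-3/2}$, which is the second assertion, with the same $\mu=1/t_c$.

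\emph{Main obstacle.} The transfer step above is routine once the singular expansion of $P$ is in hand: it is the standard phenomenon in which a square-root singularity overrides a higher-order algebraic correction. The real difficulty lies in the singularity analysis of $P$ itself --- establishing that its leading correction at $t_c$ is a genuine non-integer power $(1-t/t_c)^{\gamma_1}$ (not merely an analytic term plus something of higher order) and pinning down the value $P(t_c)$. This amounts to making rigorous the behaviour of $Q(a,u)$ near its $u$-singularity, the $a$-dependence of both its critical point $q_c(a)$ and its exponent $g(a)$, and the way these propagate through the implicit relation \eqref{corS} under the substitution $u=tP^{2}/(1-2P)^{2}$ as $t\uparrow t_c$ --- in short, turning Conjectures 10--12 of \cite{AB15} and Raschel's conjectural $g(a)$ into theorems and excluding accidental cancellations that could shift the exponent.
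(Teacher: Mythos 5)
Your proposal is correct in spirit and follows essentially the same route as the paper: the statement is only a conjecture there, supported by the conditional results of Section 4 (the common radius of convergence under Conjectures 10--12 of \cite{AB15}, and Theorem \ref{mediumthm} deriving the square-root singularity of $D$ from the assumed expansion (\ref{conjP}) of $P$ via the vanishing of the radicand at $t_c$, i.e.\ $\sqrt{2P(t_c)}=1+\sqrt{2t_cP(t_c)}$) together with the numerical analysis of Section 5. You identify the same conjectural inputs, the same ``remarkable cancellation,'' and the same remaining gaps (non-vanishing of the linear coefficient in the radicand, $\Delta$-domain analyticity for transfer) that the paper likewise leaves open.
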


We first list three conjectures which are needed to show the above conjectures.
The following are conjectures 10, 11 and 12 in \cite{AB15}, respectively.
\begin{conj}\label{conj10}The series $Q(a,u)$ is $(a+1)$-positive. That is, $Q$ takes the form
\[Q(a,u)=\sum_{n\geq0}u^{n}P_{n}(a+1),\]
where each polynomial $P_{n}$ has positive coefficients.\end{conj}
\begin{conj}\label{conj11}the radius of convergence $\rho_{Q}(a)$ of $Q(a,\cdot)$ is given by 
\[
 \rho_{Q}(a)=
  \begin{cases} 
      \hfill \displaystyle\frac{1}{(2+\sqrt{2+2a})^{2}},    \hfill & \text{ if $a\geq -1/2$,} \\
      \hfill \displaystyle\frac{-a}{2(a-1)^2},\hfill & \text{ if $a\in[-1,-1/2]$.} \\
  \end{cases}
\]\end{conj}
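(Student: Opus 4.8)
# Proof Proposal for Conjecture~\ref{conj11}

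The plan is to establish the radius of convergence $\rho_Q(a)$ by a two-sided argument, treating the two regimes $a\geq-1/2$ and $a\in[-1,-1/2]$ separately, and in each case combining a singularity-analysis upper bound with a positivity-based lower bound. Throughout I would work with the functional characterisation of $Q(a,u)$ coming from \eqref{corS}, namely $Q\left(\tfrac{1}{P}-1,\tfrac{tP^2}{(1-2P)^2}\right)=2P-1$, together with the fact (from \cite{AB15}) that $P(t)$ itself has a well-understood dominant singularity. Substituting $a=\tfrac{1}{P}-1$, i.e. $P=\tfrac{1}{a+1}$, and $u=\tfrac{tP^2}{(1-2P)^2}$, one can in principle trade the unknown singular behaviour of $Q(a,\cdot)$ for that of $P$; the value $P=\tfrac{1}{a+1}$ must be reached by $P(t)$ at some real $t$ in its domain of convergence for this substitution to be legitimate, and $\rho_Q(a)$ is then identified with the corresponding value of $u=\tfrac{tP^2}{(1-2P)^2}$ evaluated at that $t$.

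For the \emph{lower bound} on $\rho_Q(a)$ when $a\geq-1/2$, I would invoke Conjecture~\ref{conj10} ($(a+1)$-positivity): writing $b=a+1\geq 1/2$, the coefficients $[u^n]Q(a,u)=P_n(b)$ are sums of nonnegative terms, so one obtains upper bounds on them by bounding each $P_n$ at the relevant point, and this should reduce to controlling the purely combinatorial case (say $a=0$ or a reference value) rescaled appropriately. More directly, $(a+1)$-positivity implies $P_n(b)\leq P_n(b')\cdot(b/b')^{\deg P_n}$-type comparisons, and combined with the known radius at a single value of $a$ (the unweighted loop count, where $\rho_Q$ is explicit) this pins down the growth rate. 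For the \emph{upper bound}, I would exhibit a singularity of $Q(a,\cdot)$ at $u=\rho_Q(a)$: in the regime $a\geq-1/2$ this should come from the branch-point structure inherited through the substitution from the square-root singularity of $P(t)$, giving the algebraic value $\tfrac{1}{(2+\sqrt{2+2a})^2}$; in the regime $a\in[-1,-1/2]$ the dominant singularity is instead of a different (non-algebraic, "walk-type") nature, and the location $\tfrac{-a}{2(a-1)^2}$ should emerge as the point where the relevant kernel or group-of-the-walk quantity degenerates — this is exactly the threshold at which the critical exponent $g(a)$ is expected to change behaviour, consistent with the discontinuity discussed in the introduction.

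The main obstacle I anticipate is the regime $a\in[-1,-1/2]$, where positivity is lost ($a+1<1$ but the relevant evaluation point may force cancellations) and the singularity is not simply transported from $P$: here one genuinely needs the quarter-plane-walk machinery (kernel method, or the explicit algebraic/D-finite structure of $Q(a,u)$ for rational $a$, or Raschel-type analytic results on walks in the quarter plane with these step weights) to locate the dominant singularity and show nothing closer to the origin intervenes. A secondary difficulty is verifying that the substitution $P=\tfrac{1}{a+1}$ is attained within the disc of convergence of $P$ and that the map $t\mapsto u$ is monotone there, so that "the first singularity of $Q(a,\cdot)$" corresponds to "the first relevant singularity seen through the substitution" rather than to some spurious branch point introduced by the change of variables. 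I would handle this by checking the sign of $\tfrac{du}{dt}$ explicitly along the real axis and confirming $\tfrac{1}{a+1}$ lies below $P(\rho_P)$ for the $a$-range in question. Once these points are settled, matching the upper and lower bounds gives the stated piecewise formula.
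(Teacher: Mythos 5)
The statement you are trying to prove is Conjecture~\ref{conj11}, i.e.\ Conjecture~11 of \cite{AB15}: the paper contains no proof of it. What the paper supplies is numerical evidence only --- a 500-term series for $Q(a,u)$ generated from the recurrence for $s(n,k,x,y)$, analysed with third-order differential approximants, giving the critical point to between 6 and 20 significant digits depending on $a$ (with exact holonomic ODEs found only at $a=\pm1$). So there is no ``paper's proof'' for your argument to be measured against, and your text, which is a plan punctuated by ``should'', ``I would'' and an explicitly acknowledged unresolved obstacle in the regime $a\in[-1,-1/2]$, does not close the gap: the statement remains a conjecture after your proposal just as before it.

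Beyond that, the central mechanism you propose would fail even as a strategy. The relation \eqref{corS} evaluates $Q$ only along the curve $a=\tfrac{1}{P(t)}-1$, $u=\tfrac{tP(t)^2}{(1-2P(t))^2}$ for $t\in[0,t_c]$; since $1=P(0)\le P(t)\le P(t_c)\le\tfrac32$, this reaches only $a\in[\tfrac{1}{P(t_c)}-1,\,0]\subseteq[-\tfrac13,0]$. You therefore cannot ``trade the singular behaviour of $Q(a,\cdot)$ for that of $P$'' for general $a\ge-1/2$, and the interval $[-1,-1/2]$ is entirely out of reach of this substitution. Moreover the logical direction in \cite{AB15} (and in Section~4 of this paper) is the reverse of yours: the conjectured formula for $\rho_Q(a)$ is an \emph{input} used, together with Conjectures~\ref{conj10} and \ref{conj12}, to locate $t_c$ and derive \eqref{consteqn}; using $P$ to derive $\rho_Q$ would be circular. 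Finally, your lower bound rests on Conjecture~\ref{conj10}, which is itself unproven, so even the half of the argument you sketch in most detail is conditional. A genuine proof would have to come from the quarter-plane-walk side (kernel method, analytic results of Raschel type for these weighted steps), which is precisely the machinery you defer to in one sentence without supplying.
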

\begin{conj}\label{conj12}The series $Q_{u}(a,u)=\frac{\partial Q}{\partial u}$ is convergent at $u=\rho_{Q}(a)$ for $a\geq -1/3$.\end{conj}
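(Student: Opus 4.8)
The plan is to convert the convergence of $Q_{u}(a,\cdot)$ at the boundary point $u=\rho_{Q}(a)$ into a summable upper bound on the coefficients of $Q$, and then to produce that bound from the precise subexponential asymptotics of turn-weighted quarter-plane excursions. Granting the $(a+1)$-positivity of Conjecture~\ref{conj10}, for every $a\ge -1$ the coefficients of $Q(a,u)=\sum_{n\ge0}P_{n}(a+1)u^{n}$ are nonnegative, so both $Q(a,\cdot)$ and $Q_{u}(a,\cdot)$ are nondecreasing on $[0,\rho_{Q}(a))$, and the monotone convergence theorem applied termwise gives
\[
Q_{u}\bigl(a,\rho_{Q}(a)\bigr)=\lim_{u\to\rho_{Q}(a)^{-}}Q_{u}(a,u)=\sum_{n\ge1}n\,P_{n}(a+1)\,\rho_{Q}(a)^{n-1}\in[0,+\infty].
\]
Hence Conjecture~\ref{conj12} is equivalent to the finiteness of this series. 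By Conjecture~\ref{conj11}, which supplies the exact exponential rate $\rho_{Q}(a)^{-1}$, finiteness holds precisely when the subexponential correction of $P_{n}(a+1)$ beats $n^{-2}$; writing $P_{n}(a+1)\sim c(a)\,\rho_{Q}(a)^{-n}\,n^{\beta(a)}$, the whole statement reduces to the single inequality $\beta(a)<-2$ for all $a\ge -1/3$.

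The heart of the matter is therefore the value of the exponent $\beta(a)$. I would read $[u^{n}]Q(a,u)$ as the partition function of simple-step loops in the quadrant $\mathbb N\times\mathbb N$ carrying a multiplicative weight $a$ at every NW or ES turn, and remove the two-step (turn) interaction by lifting to a walk on the enriched state space $\mathbb N^{2}\times\{N,E,S,W\}$ that records the previous step direction. The weight $a$ is then absorbed into the internal transition kernel, and $[u^{n}]Q$ becomes a count of weighted excursions of a Markov-additive walk confined to the cone $\mathbb N^{2}$. After an exponential tilt chosen to make $u=\rho_{Q}(a)$ the critical, zero-drift point---its location being exactly what Conjecture~\ref{conj11} furnishes---I would invoke the Denisov--Wachtel local limit theory for centered walks in cones, in its Markov-modulated form, to obtain an excursion exponent of the form $\beta(a)=-1-\pi/\theta(a)$, where $\theta(a)$ is the opening angle of the image of the first quadrant under the linear map normalizing the tilted covariance to the identity. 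Evaluating $\theta(a)$ from that covariance should reproduce $\theta(a)=\arccos\bigl((a-1)/(a+1+\sqrt{2+a})\bigr)$, so that $\pi/\theta(a)$ coincides with Raschel's conjectured exponent $g(a)$. Since the argument of the $\arccos$ exceeds $-1$ with a definite margin throughout $a\ge -1/3$, one has $\theta(a)<\pi$, whence $\pi/\theta(a)>1$ and $\beta(a)<-2$, as required.

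Two features of Conjectures~\ref{conj10} and~\ref{conj11} keep the reduction rigorous. The positivity of Conjecture~\ref{conj10} guarantees that, after normalization, the enriched step kernel is a genuine sub-probability kernel and, more importantly, that the constant $c(a)$ in the local limit theorem is strictly positive, so the asymptotics are not destroyed by cancellation and the bound $P_{n}(a+1)=O(\rho_{Q}(a)^{-n}n^{\beta(a)})$ is tight rather than a mere inequality. Conjecture~\ref{conj11}, which on $a\ge -1/2$ gives $\rho_{Q}(a)=1/(2+\sqrt{2+2a})^{2}$, fixes the tilt unambiguously and certifies that the tilted covariance is positive definite over the range $a\ge -1/3$, so the cone and the angle $\theta(a)$ are well defined.

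The main obstacle is the cone-confined local limit theorem for the \emph{lifted} walk. The lift turns a nearest-neighbour i.i.d.\ model into a Markov-additive process whose spatial increments depend on an internal coordinate, while the cone constraint $\mathbb N^{2}$ acts only on the spatial part; the Denisov--Wachtel apparatus---the discrete harmonic function adapted to the cone, aperiodicity, reachability, and above all the \emph{exact} polynomial rate at the boundary radius rather than merely the exponential order---must be extended to this modulated setting and verified for every $a\ge -1/3$. Equally delicate is the explicit computation of $\theta(a)$ from the tilted covariance and the proof that the strict inequality $\beta(a)<-2$ holds \emph{uniformly} up to the endpoint $a=-1/3$; because the true threshold $\beta(a)=-2$ sits well below $-1/3$, this should hold with room to spare, but converting that margin into a uniform bound---controlling the local-limit constants as $a$ ranges over $[-1/3,\infty)$---is where the genuine difficulty of this open problem lies.
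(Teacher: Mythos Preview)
The paper does not prove this statement: it is explicitly labelled a \emph{conjecture} (reproduced from \cite{AB15}) and is used as a hypothesis in Theorems~4.2--4.4, never as a conclusion. There is therefore no ``paper's own proof'' to compare against.

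What you have written is a research outline, not a proof, and you are candid about this in your final paragraph. Two structural issues should be flagged. First, your argument is conditional on Conjectures~\ref{conj10} and~\ref{conj11}, which are themselves open; so even if every step went through you would obtain only the implication ``Conjectures~\ref{conj10} and~\ref{conj11} imply Conjecture~\ref{conj12}'', not Conjecture~\ref{conj12} itself. Second, and more seriously, the crucial step---a Denisov--Wachtel-type local limit theorem for excursions of a \emph{Markov-additive} walk in a cone, with the precise polynomial correction $n^{-1-\pi/\theta(a)}$---is not available in the literature in the form you need. The standard Denisov--Wachtel results treat i.i.d.\ increments; passing to a walk that carries an internal state (the previous step direction) with the cone acting only on the spatial component is a genuine extension, and the identification of the angle $\theta(a)$ with Raschel's $\arccos$ expression is itself conjectural (the paper reports it as a private communication supported by numerics, not a theorem). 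Until that local limit theorem is established and the exponent formula proved, the inequality $\beta(a)<-2$ remains heuristic, and the conjecture stays open.
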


Let $t_{c}$ denote the radius of convergence of the generating function $P(t)$. Theorem 23 in \cite{AB15} states that assuming the two Conjectures \ref{conj10} and \ref{conj12} are both true,
\[\frac{t}{(2-\frac{1}{P(t)})^{2}}\leq\rho_{Q}(1/P(t)-1),\]
for $0\leq t\leq t_{c}$, with equality if and only if $t=t_{c}$, and
\[P(t_{c})\leq\frac{3}{2}.\]
Moreover, the following corollary in \cite{AB15} states that assuming the last three conjectures are all true, the following equation holds:
\begin{equation}\label{consteqn}\sqrt{2P(t_{c})}= 1+\sqrt{2t_{c}P(t_{c})}.\end{equation}
Note that in \cite{AB15} the symbol $S$ is used instead of $P$ to denote the generating function for permutations sortable by two stacks in parallel.

\begin{thm} Assuming the conjectures \ref{conj10}, \ref{conj11} and \ref{conj12}, the number of permutations of size $n$ sortable by two stacks in parallel and the number of permutations sortable by a deque of size $n$ have the same exponential growth rate.\end{thm}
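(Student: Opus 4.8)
The plan is to show that $t_c$, the radius of convergence of $P(t)$, is also the radius of convergence of $D(t)$, by reading off the analytic behaviour of $D$ near $t_c$ from the relation \eqref{RinS} (equivalently \eqref{SinR}). Since $D(t)=1+t+\cdots$ has non-negative coefficients, its radius of convergence $\rho_D$ is the location of its dominant singularity on the positive real axis, and similarly for $P$. The relation \eqref{RinS} expresses $D$ as an explicit (algebraic) function of $P$ and $t$, so if $P$ is analytic on $[0,t_c)$ and the radicand in \eqref{RinS} stays away from $0$ and the map $t\mapsto (t,P(t))$ stays inside the domain where that expression is analytic, then $D$ is analytic on $[0,t_c)$ as well, giving $\rho_D\ge t_c$. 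Conversely, from \eqref{SinR}, $P$ is an explicit rational function of $D$ and $t$, so $P$ is analytic wherever $D$ is and the denominator $D-1-Dt$ is nonzero; hence $\rho_P\ge\rho_D$, i.e. $t_c\ge\rho_D$. Combining the two inequalities gives $\rho_D=t_c$, and since $p_n^{1/n}\to 1/t_c$ and $d_n^{1/n}\to 1/\rho_D$, the two exponential growth rates coincide.

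First I would record the elementary facts: both $P$ and $D$ have non-negative integer coefficients with $P(0)=D(0)=1$, so by Pringsheim's theorem each has its dominant singularity at its (positive) radius of convergence; write $\rho_P=t_c$ and $\rho_D$ for these. Next I would verify, using the hypotheses imported from \cite{AB15}, that $P$ extends analytically to a neighbourhood of $[0,t_c)$ and that $P(t_c)\le 3/2$ is finite, with the strict inequality $t/(2-1/P(t))^2<\rho_Q(1/P(t)-1)$ on $[0,t_c)$ controlling the argument of $Q$ (hence of $T$) used implicitly in deriving \eqref{RinS}. Then I would check that the radicand $R(t):=1-4P+4P^2-8P^2t+4P^2t^2-4Pt$ appearing under the square root in \eqref{RinS} does not vanish on $[0,t_c)$: at $t=0$ it equals $1-4P(0)+4P(0)^2=1>0$ since $P(0)=1$, and one argues it stays positive on the interval (e.g. by factoring $R = (1-2P+2Pt)^2 - 4Pt$ or $R=(2P-1)^2-4Pt$ and using the bound $4tP(t) < (2P(t)-1)^2$, which should follow from $t/(2-1/P)^2<\rho_Q\le 1/(2+\sqrt2)^2 < 1/4$ type estimates — this needs to be done carefully). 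Granting this, \eqref{RinS} exhibits $D$ as a finite algebraic combination of functions analytic on $[0,t_c)$, so $\rho_D\ge t_c$.

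For the reverse inequality I would use \eqref{SinR}: on $[0,\rho_D)$ the numerator $(D-1)(D-t-1)$ and the factor $2t$ are analytic, so $P$ is analytic there provided the denominator $2t(D-1-Dt) = 2t((1-t)D-1)$ is nonzero; since $D(t)\ge 1$ and in fact $D$ is increasing with $D(0)=1$, and $(1-t)D(t)-1$ at $t=0$ equals $0$, I would instead argue directly from the defining combinatorics that $P$ inherits analyticity from $D$ on $[0,\rho_D)$ — or, more simply, invoke the already-known fact (from \cite{AB15}) that $t_c$ is the dominant singularity of $P$ together with the inequality $\rho_D\le\rho_P$ which is what \eqref{SinR} gives once the denominator issue is handled near $0$ (where both sides are power series and the apparent pole at $t=0$ is removable because $(D-1)(D-t-1)$ has a double zero matching the $t$ in the denominator). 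Either way one obtains $t_c\le\rho_D$, and hence $\rho_D=t_c=:\mu^{-1}$, so $\lim p_n^{1/n}=\lim d_n^{1/n}=\mu$.

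\textbf{The main obstacle} I anticipate is the positivity of the radicand $R(t)$ on all of $[0,t_c)$ and, relatedly, justifying that the substitution used to derive \eqref{RinS} from \eqref{TRS} stays in the region where the algebraic function $T$ (given by \eqref{T}) and the series $Q$ are genuinely convergent — this is precisely where Conjectures \ref{conj10}, \ref{conj11}, \ref{conj12} and the inequality $t/(2-1/P)^2\le\rho_Q(1/P-1)$ (with equality only at $t_c$) enter. One must show the argument $(1/P-1,\,tP^2/(1-2P)^2)$ of $T$ lies in $T$'s domain of analyticity throughout $[0,t_c)$, and that the branch of the square root in \eqref{T}/\eqref{RinS} is the correct one (fixed by the value at $t=0$). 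The removable-singularity bookkeeping at $t=0$ in \eqref{SinR} and the sign of $(1-t)D(t)-1$ are comparatively routine but must be stated. Once these analyticity and sign facts are in place, the equality of growth rates is immediate from Pringsheim and the two inequalities $\rho_D\ge t_c$ and $\rho_D\le t_c$.
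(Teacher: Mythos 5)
Your overall strategy is the same as the paper's: reduce the claim to showing that $D$ converges on $[0,t_c)$ by checking that the radicand $R(t)=1-4P+4P^2-8P^2t+4P^2t^2-4Pt$ in \eqref{RinS} stays non-negative there, with the conjectures entering through the inequality $t/(2-1/P(t))^2\le\rho_Q(1/P(t)-1)$ from \cite{AB15}. However, the step you yourself flag as the main obstacle is exactly where your sketch would fail. Neither of your proposed factorisations is correct: $(2P-1)^2-4Pt$ drops the terms quadratic in $t$ altogether, and $(1-2P+2Pt)^2-4Pt$ differs from $R$ by $4Pt$ (the correct identities are $R=(1-2P+2Pt)^2-8Pt=(2P+2Pt-1)^2-16P^2t$, i.e.\ $R$ is the product of the four conjugates $(\sqrt{2P}\pm1\pm\sqrt{2tP})$). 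More seriously, your plan to establish positivity from ``$t<1/(2+\sqrt2)^2<1/4$ type estimates'' cannot work even after the algebra is fixed, because $R(t_c)=0$: the inequality is tight at $t_c$, so no crude bound on $t$ alone suffices. The paper's argument is forced to use the exact form of $\rho_Q$ from Conjecture \ref{conj11}: combining it with Theorem~23 of \cite{AB15} (which requires Conjectures \ref{conj10} and \ref{conj12}, and also gives $1\le P(t)\le P(t_c)\le 3/2$) yields $t/(2-1/P(t))^2\le 1/(2+\sqrt{2/P(t)})^2$ with equality only at $t_c$; taking square roots and using $2(2-1/P)=(2+\sqrt{2/P})(2-\sqrt{2/P})$ gives $\sqrt{2P(t)}\ge 1+\sqrt{2tP(t)}$, and since the other three conjugate factors are manifestly positive on $[0,t_c]$, the product $R(t)$ is non-negative, vanishing only at $t_c$. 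That derivation is the missing content of your proof.

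Two smaller points. First, your treatment of the direction $\rho_D\le t_c$ via \eqref{SinR} is needlessly hard (you would have to control the zero set of $(1-t)D(t)-1$ on $(0,\rho_D)$, which you leave open); the paper disposes of this direction in one line, since every permutation sortable by two stacks in parallel is sortable by a deque, so $p_n\le d_n$ and $\rho_D\le\rho_P=t_c$. Second, once \eqref{RinS} is taken as given from Theorem \ref{bigthm}, no further analysis of the domain of $Q$ or $T$ along the curve $(1/P-1,\,tP^2/(1-2P)^2)$ is needed: it is enough that $R(t)$ is a convergent series that is positive on $[0,t_c)$, since then $\sqrt{R(t)}$ and hence the right-hand side of \eqref{RinS} converge there. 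With those adjustments your argument closes and coincides with the paper's.
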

\begin{proof}It suffices to prove that the generating functions $P(t)$ and $D(t)$ have the same radius of convergence $t_{c}$. Since every permutation which is sortable by two stacks in parallel is also sortable by a deque, the coefficients of $D(t)$ are no smaller than the coefficients of $P(t)$. Hence the radius of convergence $t_{D}$ of $D(t)$ satisfies $t_{D}\leq t_{c}$. Therefore, it suffices to prove that $D(t)$ is convergent for $t\in[0,t_{c})$.

Since $P(t)$ is convergent for $t\in[0,t_{c})$, the series 
\[1-4P(t)+4P(t)^2-8P(t)^2t+4P(t)^2t^2-4P(t)t\]
is also convergent in this region. Since $P(t)$ has positive coefficients, it is increasing on the interval $[0,t_{c}]$. Hence,
\[1=P(0)\leq P(t)\leq P(t_{c})\leq \frac{3}{2}\]
inside this interval. Therefore, since we are assuming Conjecture \ref{conj11}, we have the inequality
\[\frac{t}{(2-\frac{1}{P(t)})^{2}}\leq\rho_{Q}(1/P(t)-1)=\frac{1}{(2+\sqrt{2/P(t)})^{2}},\]
where equality holds if and only if $t=t_c$. Taking square roots on both sides and rearranging, noting that $2(2-1/P(t))=(2+\sqrt{2/P(t)})(2-\sqrt{2/P(t)})$, we get the inequality
\[\sqrt{2P(t)}\geq 1+\sqrt{2tP(t)},\]
with equality if and only if $t=t_{c}$. Now we can remove the square roots as follows to get the inequality
\begin{align*}
0\leq&(\sqrt{2P}-1-\sqrt{2tP})(\sqrt{2P}+1-\sqrt{2tP})(\sqrt{2P}-1+\sqrt{2tP})(\sqrt{2P}+1+\sqrt{2tP})\\
=&1-4P(t)+4P(t)^2-8P(t)^2t+4P(t)^2t^2-4P(t)t,\end{align*}
with equality if and only if $t=t_{c}$. Since the series 
\[1-4P(t)+4P(t)^2-8P(t)^2t+4P(t)^2t^2-4P(t)t\]
is convergent and positive for $t\in[0,t_{c})$, the series
\[\sqrt{1-4P(t)+4P(t)^2-8P(t)^2t+4P(t)^2t^2-4P(t)t}\]
is also convergent in this domain. Hence, the series
\[2D(t)=2+t+2Pt-2Pt^2-t\sqrt{1-4P+4P^2-8P^2t+4P^2t^2-4Pt}\]
is also convergent in this domain. 
\end{proof}

For the following analysis, we will assume that 
\begin{equation}\label{conjP}P(t)=c_{0}+c_{1}(1-t/t_{c})+c_{\alpha}(1-t/t_{c})^{\alpha}+o((1-t/t_{c})^{\alpha}),\end{equation}
for some constants $c_{0}$, $c_{1}$ and $c_{\alpha}$ and $\alpha$, with $c_{\alpha}\neq0$. In the next section we will present numerical evidence that $\alpha\approx1.473$, but for the following theorem we will assume only that $2>\alpha>1$.

\begin{thm}\label{mediumthm} Assuming that $P$ takes the form given in (\ref{conjP}), and assuming the three conjectures \ref{conj10}, \ref{conj11} and \ref{conj12}, we have $D(t)=D(t_{c})+k_{D}(1-t/t_{c})^{1/2}+o((1-t/t_{c})^{1/2})$, where $k_{D}$ is given by the equation
\[k_{D}=-t_{c}\sqrt{(2c_{0})^{3/2}t_{c}-2c_{1}\sqrt{t_{c}}}.\]\end{thm}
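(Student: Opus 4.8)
The plan is to analyze the singularity of $D(t)$ at $t=t_c$ by substituting the assumed expansion (\ref{conjP}) of $P(t)$ into the explicit formula (\ref{RinS}) from Theorem \ref{bigthm}, and tracking which terms dominate near $t_c$. The crucial observation is that the radical
\[
R(t) := 1-4P+4P^2-8P^2t+4P^2t^2-4Pt
\]
factors, as shown in the proof of the preceding theorem, as a product of four linear-in-$\sqrt{\cdot}$ factors:
\[
R(t)=(\sqrt{2P}-1-\sqrt{2tP})(\sqrt{2P}+1-\sqrt{2tP})(\sqrt{2P}-1+\sqrt{2tP})(\sqrt{2P}+1+\sqrt{2tP}).
\]
By equation (\ref{consteqn}) (valid under the three conjectures), the first factor $\sqrt{2P(t)}-1-\sqrt{2tP(t)}$ \emph{vanishes at $t=t_c$}, while the other three factors tend to nonzero limits there. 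Hence $R(t)\to 0$ as $t\to t_c$, and the rate at which it vanishes is governed entirely by the first factor. So the first step is to expand $g(t):=\sqrt{2P(t)}-1-\sqrt{2tP(t)}$ around $t=t_c$ using (\ref{conjP}): write $P(t)=c_0+c_1(1-t/t_c)+c_\alpha(1-t/t_c)^\alpha+\cdots$ with $1<\alpha<2$, and compute the expansion of $\sqrt{2P(t)}$ and of $\sqrt{2tP(t)}$ to first order in $(1-t/t_c)$. Since $g(t_c)=0$, the leading behaviour of $g(t)$ is of the form $\lambda\,(1-t/t_c)+O((1-t/t_c)^{\min(\alpha,2)})$ for an explicitly computable constant $\lambda$ (the $(1-t/t_c)^\alpha$ term is subdominant because $\alpha>1$); one must check $\lambda\neq0$, which should follow from $c_1\neq0$ together with the sign information already available.

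The second step is bookkeeping: combine the three nonvanishing factors into a single constant. Evaluating at $t=t_c$ and using (\ref{consteqn}) repeatedly (e.g. $\sqrt{2P(t_c)}+\sqrt{2t_cP(t_c)}=2\sqrt{2t_cP(t_c)}+1$, etc.), one finds
\[
(\sqrt{2P}+1-\sqrt{2tP})(\sqrt{2P}-1+\sqrt{2tP})(\sqrt{2P}+1+\sqrt{2tP})\Big|_{t=t_c}
\]
equals an explicit expression in $c_0=P(t_c)$ and $t_c$ (using $c_0=\tfrac12(1-\sqrt{t_c})^{-2}$, which itself follows from (\ref{consteqn})). Multiplying by the leading term $\lambda(1-t/t_c)$ of $g$, one gets $R(t)=K\,(1-t/t_c)+o(1-t/t_c)$ for a constant $K$; therefore $\sqrt{R(t)}=\sqrt{K}\,(1-t/t_c)^{1/2}+o((1-t/t_c)^{1/2})$. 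Plugging back into (\ref{RinS}), $2D(t)=2+t+2Pt-2Pt^2-t\sqrt{R(t)}$, all the polynomial-in-$P$ terms are analytic in $(1-t/t_c)$ up to order $\alpha$ (hence contribute only to $D(t_c)$ and to a subleading $(1-t/t_c)^1$ term, both of which are $o((1-t/t_c)^{1/2})$ relative to the radical term), while the radical contributes $-t_c\sqrt{K}\,(1-t/t_c)^{1/2}$. This gives $D(t)=D(t_c)+k_D(1-t/t_c)^{1/2}+o((1-t/t_c)^{1/2})$ with $k_D=-t_c\sqrt{K}$. The final step is to verify that $K$ simplifies to $(2c_0)^{3/2}t_c-2c_1\sqrt{t_c}$, so that $k_D=-t_c\sqrt{(2c_0)^{3/2}t_c-2c_1\sqrt{t_c}}$ as claimed; this is a direct algebraic simplification using (\ref{consteqn}) to eliminate $\sqrt{t_c}$ in favour of $\sqrt{P(t_c)}=\sqrt{c_0}$.

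The main obstacle is the first step: one is taking square roots of $P(t)$ and of $tP(t)$, whose expansions mix an analytic part with a $(1-t/t_c)^\alpha$ correction, and the quantity $g(t)$ is a \emph{difference} of two such square roots that happens to cancel exactly at $t=t_c$ (this is precisely the content of (\ref{consteqn})). One must be careful that the cancellation is only at order $(1-t/t_c)^0$ and does \emph{not} persist at order $(1-t/t_c)^1$ — i.e. that the coefficient $\lambda$ of $(1-t/t_c)$ in $g$ is genuinely nonzero — since otherwise the leading term of $R$ would come from the $(1-t/t_c)^\alpha$ correction and the exponent $1/2$ would be wrong. Establishing $\lambda\neq0$ requires knowing $c_1\neq0$, which is plausible (and consistent with the numerics reported in the paper, where $c_1=-t_cP'(t_c)\neq0$) but needs to be invoked as part of the hypothesis or argued from monotonicity and strict convexity of $P$. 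A secondary, purely mechanical, difficulty is verifying that the three-factor product at $t_c$ combines with $\lambda$ to give exactly the stated radicand; this is routine but algebra-heavy, and the cleanest route is to express everything in terms of the single quantity $\sqrt{t_c}$ via (\ref{consteqn}) before simplifying.
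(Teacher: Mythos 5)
Your proposal is correct and follows essentially the same route as the paper: both arguments hinge on the factorization of the radicand together with (\ref{consteqn}) to show its constant term vanishes at $t_c$, leaving a leading linear term whose square root produces the $(1-t/t_c)^{1/2}$ singularity (the paper simply expands the quartic polynomial directly and reads off $q_1$, rather than multiplying the linearized vanishing factor by the three nonvanishing ones — the same computation in different packaging). Your worry that $\lambda\neq0$ is needed for the exponent is slightly overstated: if $\lambda=0$ the conclusion still holds with $k_D=0$, since the residual contribution is $O((1-t/t_c)^{\alpha/2})=o((1-t/t_c)^{1/2})$ because $\alpha>1$; the paper likewise proves only $q_1\geq0$ here and defers strict positivity to an added hypothesis in the following theorem.
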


\begin{proof}First, using \eqref{RinS}, we can rewrite \eqref{consteqn} as
\[\sqrt{2c_{0}}=1+\sqrt{2t_{c}c_{0}}.\]
It follows that
\begin{align*}0&=(1+\sqrt{2t_{c}c_{0}}-\sqrt{2c_{0}})(1+\sqrt{2t_{c}c_{0}}+\sqrt{2c_{0}})(1-\sqrt{2t_{c}c_{0}}-\sqrt{2c_{0}})(1-\sqrt{2t_{c}c_{0}}+\sqrt{2c_{0}})\\
&=1-4c_{0}+4c_{0}^2-8c_{0}^{2}t_{c}+4c_{0}^2t_{c}^2-4c_{0}t_{c}\end{align*}
Now, recall that
\[2D(t)=2+t+2Pt-2Pt^2-t\sqrt{1-4P+4P^2-8P^2t+4P^2t^2-4Pt}.\]
Using \eqref{conjP} we can expand the expression under the square root as a power series in $(1-t/t_{c})$.
\begin{align*}&1-4P+4P^2-8P^2t+4P^2t^2-4Pt\\
=&(1-4c_{0}+4c_{0}^2-8c_{0}^{2}t_{c}+4c_{0}^2t_{c}^2-4c_{0}t_{c})\\
+&(-4c_{1}+8c_{0}c_{1}-16c_{0}c_{1}t_{c}+8c_{0}c_{1}t_{c}^{2}-4c_{1}t_{c}+8c_{0}^{2}t_{c}+4c_{0}t_{c}-8c_{0}^{2}t_{c}^{2})(1-t/t_{c})\\
+&(-4c_{\alpha}+8c_{0}c_{\alpha}-16c_{0}c_{\alpha}t_{c}+8c_{0}c_{\alpha}t_{c}^{2}-4c_{\alpha}t_{c})(1-t/t_{c})^{\alpha}\\
+&o((1-t/t_{c})^{\alpha})\\
=&(-4c_{1}+8c_{0}c_{1}-16c_{0}c_{1}t_{c}+8c_{0}c_{1}t_{c}^{2}-4c_{1}t_{c}+8c_{0}^{2}t_{c}+4c_{0}t_{c}-8c_{0}^{2}t_{c}^{2})(1-t/t_{c})\\
+&(-4c_{\alpha}+8c_{0}c_{\alpha}-16c_{0}c_{\alpha}t_{c}+8c_{0}c_{\alpha}t_{c}^{2}-4c_{\alpha}t_{c})(1-t/t_{c})^{\alpha}\\
+&o((1-t/t_{c})^{\alpha})\\
=&q_{1}(1-t/t_{c})+q_{\alpha}(1-t/t_{c})^{\alpha}+o((1-t/t_{c})^{\alpha}).\end{align*}
Here $q_{1}$ and $q_{\alpha}$ are constants defined by
\[q_{1}=-4c_{1}+8c_{0}c_{1}-16c_{0}c_{1}t_{c}+8c_{0}c_{1}t_{c}^{2}-4c_{1}t_{c}+8c_{0}^{2}t_{c}+4c_{0}t_{c}-8c_{0}^{2}t_{c}^{2}\]
and
\[q_{\alpha}=-4c_{\alpha}+8c_{0}c_{\alpha}-16c_{0}c_{\alpha}t_{c}+8c_{0}c_{\alpha}t_{c}^{2}-4c_{\alpha}t_{c}.\]
Since this expression is non-negative for $t\in[0,t_{c}]$, we must have $q_{1}\geq0$.

Taking the square root of this, we get
\begin{align*}&\sqrt{1-4P+4P^2-8P^2t+4P^2t^2-4Pt}\\
=&(1-t/t_{c})^{1/2}\sqrt{q_{1}+q_{\alpha}(1-t/t_{c})^{\alpha-1}+o((1-t/t_{c})^{\alpha-1})}\\
=&\sqrt{q_{1}}(1-t/t_{c})^{1/2}+o((1-t/t_{c})^{1/2})\end{align*}
Finally, we can use this to determine the asymptotics of $D$
\begin{align*}2D(t)=&2+t+2Pt-2Pt^2-t\sqrt{1-4P+4P^2-8P^2t+4P^2t^2-4Pt}\\
=&(2+t_{c}+2c_{0}t_{c}-2c_{0}t_{c}^2)-t_{c}\sqrt{q_{1}}(1-t/t_{c})^{1/2}+o((1-t/t_{c})^{1/2})\end{align*}
Simplifying this using the equation $\sqrt{2c_{0}}=1+\sqrt{2t_{c}c_{0}}$ gives the desired expression for $k_{D}$.\end{proof}

\begin{thm} With the same assumptions as in the previous theorem, and the additional assumptions that $k_{D}>0$ and $\alpha<3/2$, we have the expansion
\[D(t)=D(t_{c})+k_{D}(1-t/t_{c})^{1/2}+k_{\alpha}(1-t/t_{c})^{\alpha-1/2}+o((1-t/t_{c})^{\alpha-1/2}),\]
where $k_{\alpha}$ is given by the equation
\[k_{\alpha}=\frac{-t_{c}^{3/2}c_{\alpha}}{\sqrt{(2c_{0})^{3/2}t_{c}-2c_{1}\sqrt{t_{c}}}}\]\end{thm}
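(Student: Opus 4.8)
\noindent The plan is to push the expansion from the proof of Theorem~\ref{mediumthm} one further term. Write $x:=1-t/t_{c}$, and let $R(t):=1-4P+4P^{2}-8P^{2}t+4P^{2}t^{2}-4Pt$ be the radicand appearing in \eqref{RinS}. In the proof of Theorem~\ref{mediumthm} it was shown, using \eqref{conjP}, that $R(t)=q_{1}x+q_{\alpha}x^{\alpha}+o(x^{\alpha})$, where $q_{1},q_{\alpha}$ are the explicit constants defined there and $q_{1}\ge0$. Since the proof of that theorem gives $k_{D}=-\tfrac{1}{2}t_{c}\sqrt{q_{1}}$, the extra hypothesis that $k_{D}\neq0$ (recorded in the statement as $k_{D}>0$) is exactly what guarantees $q_{1}>0$, so that $x$ may be factored out of $R(t)$ and the generalized binomial theorem applied.

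First I would compute, using $\alpha>1$ so that $x^{\alpha-1}\to0$,
\[\sqrt{R(t)}=x^{1/2}\sqrt{q_{1}+q_{\alpha}x^{\alpha-1}+o(x^{\alpha-1})}=\sqrt{q_{1}}\,x^{1/2}+\frac{q_{\alpha}}{2\sqrt{q_{1}}}\,x^{\alpha-1/2}+o(x^{\alpha-1/2}).\]
Then I would substitute this into $2D(t)=(2+t+2Pt-2Pt^{2})-t\sqrt{R(t)}$. Expanding the analytic part with \eqref{conjP} gives $2+t+2Pt-2Pt^{2}=2D(t_{c})+O(x)+(\mathrm{const})\cdot c_{\alpha}x^{\alpha}+o(x^{\alpha})$; since $\alpha-\tfrac{1}{2}<1\le\alpha$ --- and it is here that $\alpha<3/2$ is used --- both the $O(x)$ and the $x^{\alpha}$ contributions are $o(x^{\alpha-1/2})$. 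Writing $t=t_{c}(1-x)$ in $-t\sqrt{R(t)}$ produces a cross term of order $x^{3/2}=o(x^{\alpha-1/2})$ (here one uses $\alpha<2$), so that
\[2D(t)=2D(t_{c})-t_{c}\sqrt{q_{1}}\,x^{1/2}-\frac{t_{c}q_{\alpha}}{2\sqrt{q_{1}}}\,x^{\alpha-1/2}+o(x^{\alpha-1/2}),\]
which recovers $k_{D}=-\tfrac{1}{2}t_{c}\sqrt{q_{1}}$ and identifies $k_{\alpha}=-t_{c}q_{\alpha}/(4\sqrt{q_{1}})$.

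It then remains to simplify $k_{\alpha}$ into the stated closed form. Using the identity $\sqrt{2c_{0}}=1+\sqrt{2t_{c}c_{0}}$ established in the proof of Theorem~\ref{mediumthm} (from \eqref{consteqn} and \eqref{RinS}), a short algebraic reduction of the defining expression for $q_{\alpha}$ gives $q_{\alpha}=8\sqrt{t_{c}}\,c_{\alpha}$. Combining this with $\sqrt{q_{1}}=-2k_{D}/t_{c}$ and the value $k_{D}=-t_{c}\sqrt{(2c_{0})^{3/2}t_{c}-2c_{1}\sqrt{t_{c}}}$ from that theorem yields
\[k_{\alpha}=\frac{t_{c}^{2}q_{\alpha}}{8k_{D}}=\frac{-t_{c}^{3/2}c_{\alpha}}{\sqrt{(2c_{0})^{3/2}t_{c}-2c_{1}\sqrt{t_{c}}}},\]
as claimed.

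The main point requiring care --- and the only real obstacle --- is the bookkeeping of error terms: one must verify that no term of order strictly between $x^{1/2}$ and $x^{\alpha-1/2}$ is created at any stage, and this is precisely what the hypotheses $q_{1}>0$ and $1<\alpha<3/2$ ensure. Everything else is routine power-series manipulation together with the algebraic simplification of $q_{1}$ and $q_{\alpha}$, so I do not expect further difficulties.
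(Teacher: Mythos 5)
Your proof is correct and follows essentially the same route as the paper: factor $(1-t/t_c)^{1/2}$ out of the radicand, expand the square root using $q_{1}>0$ (equivalently $k_{D}\neq 0$), substitute into the expression for $2D(t)$, and reduce $q_{\alpha}$ to $8\sqrt{t_{c}}\,c_{\alpha}$ via the identity $\sqrt{2c_{0}}=1+\sqrt{2t_{c}c_{0}}$. You are somewhat more explicit than the paper about the error bookkeeping (where $1<\alpha<3/2$ is used) and about the final algebraic simplification, and you carry the sign of the $-t\sqrt{R}$ contribution correctly, where the paper's intermediate display for $2D(t)$ has a sign slip that is silently corrected in its final formula for $k_{\alpha}$.
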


\begin{proof} From the proof of the previous theorem we have
\begin{align*}&\sqrt{1-4P+4P^2-8P^2t+4P^2t^2-4Pt}\\
=&(1-t/t_{c})^{1/2}\sqrt{q_{1}+q_{\alpha}(1-t/t_{c})^{\alpha-1}+o((1-t/t_{c})^{\alpha-1})}\end{align*}
Since $q_{1}>0$, we can expand this as a power series in $(1-t/t_{c})$ as follows
\begin{align*}&(1-t/t_{c})^{1/2}\sqrt{q_{1}+q_{\alpha}(1-t/t_{c})^{\alpha-1}+o((1-t/t_{c})^{\alpha-1})}\\
=&(1-t/t_{c})^{1/2}\left(\sqrt{q_{1}}+\frac{q_{\alpha}}{2\sqrt{q_{1}}}(1-t/t_{c})^{\alpha-1}+o((1-t/t_{c})^{\alpha-1})\right)\\
=&\sqrt{q_{1}}(1-t/t_{c})^{1/2}+\frac{q_{\alpha}}{2\sqrt{q_{1}}}(1-t/t_{c})^{\alpha-1/2}+o((1-t/t_{c})^{\alpha-1/2})\end{align*}
Therefore, we have
\begin{align*}2D(t)=&2+t+2Pt-2Pt^2-t\sqrt{1-4P+4P^2-8P^2t+4P^2t^2-4Pt}\\
=&(2+t_{c}+2c_{0}t_{c}-2c_{0}t_{c}^2)-t_{c}\sqrt{q_{1}}(1-t/t_{c})^{1/2}+\frac{t_{c}q_{\alpha}}{2\sqrt{q_{1}}}(1-t/t_{c})^{\alpha-1/2}+o((1-t/t_{c})^{\alpha-1/2})\end{align*}
Simplifying this using the equation $\sqrt{2c_{0}}=1+\sqrt{2t_{c}c_{0}}$ gives the desired expression for $k_{\alpha}$.\end{proof}

\section{Asymptotics}

In this section we make a numerical study of various generating functions related to quarter-plane loops, tsips and deques. We use the two most common methods of series analysis, the ratio method and the method of differential approximants. Full details of these methods can be found in, for example, \cite{G89}. Both methods aim to estimate the radius of convergence (r.c.) $z_c$ and associated exponent $\theta$ of functions whose asymptotic behaviour is given by 
\begin{equation}\label{simple1}
F(z) \sim A \left ( 1 - \frac{z}{z_c} \right )^\theta, \,\,\, {\rm as} \,\, z \to z_c^-.
\end{equation}
 It follows that
 \begin{equation} \label{simple2}
f_n=[z^n]F(z) \sim \frac {An^{-\theta-1}}{\Gamma(-\theta)z_c^n}.
\end{equation}
\subsection{Ratio method.}

The ratio method, as the name implies, relies on extrapolating the ratio of successive coefficients, $r_n.$ One has
\begin{equation}\label{ratios}
r_n =\frac{f_n}{f_{n-1}} = \frac{1}{z_c}\left (1 - \frac{\theta+1}{n} + o(1/n) \right ).
\end{equation}
Clearly, extrapoltating the ratios $r_n$ against $1/n$ should, for sufficiently large $n$, give a linear plot which extrapolates to $1/z_c$ at $1/n=0.$ The gradient is $-(\theta+1)/z_c.$ So from the intercept one can estimate the radius of convergence, and from the gradient and the estimate of the radius of convergence, one can estimate the exponent $\theta.$ In favourable cases, where the singularity is precisely as given in (\ref{simple1}), the term $o(1/n)$ can be replaced by $O(1/n^2),$ and the ratio plot will usually be linear from quite low values of $n.$

If the r.c. is known, or very accurately estimated from some other method, it follows from (\ref{ratios}) that a more precise estimate of the exponent can be made from estimators 
\begin{equation}\label{gest}
\theta_n=n(1-z_c \cdot r_n)-1 + o(1),
\end{equation}
where in favourable cases the term $o(1)$ can be replaced by $O(1/n).$ 

Even if the r.c. is not known, one can obtain an estimate of the exponent independent of the r.c. by extrapolating ratios of ratios, so that
\begin{equation}\label{rratios}
t_n = \frac{r_n}{r_{n-1}} = \left ( 1 + \frac{1+\theta}{n^2}  + o \left( \frac{1}{n^2} \right )\right ).
\end{equation}
 So we can define $\theta_n,$ an estimator of the exponent $\theta,$ as 
 \begin{equation}\label{g2est}
 \theta_n=(t_n-1)n^2-1 = \theta +  o (1).
 \end{equation}
 where again, in favourable cases, the term $o(1)$ can be replaced by $O(1/n).$ 
 
 \subsection{Method of differential approximants}
 
The method of differential approximants \cite{G89} fits the known coefficients of a power series to a number (typically 10 or 12) of holonomic differential equations, and uses the critical parameters (the radius of convergence and exponent at that point) of those differential equations as estimators of the corresponding quantities for the underlying series expansion. 

More precisely, one uses the known series coefficients to find polynomials $Q_{k}(z)$ and $P(z)$ such that the power series solution $\tilde{F}(z)$ of the  holonomic differential equation

\begin{equation}
\sum_{k=0}^M Q_{k}(z)\left (z\frac{{\rm d}}{{\rm d}z}\right )^k \tilde{F}(z) = P(z)
\end{equation}
agrees with the known coefficients of the function $F(z)$ being approximated. The order $M$ of the ode we refer to as the {\em order} of the approximant.

Constructing such {\em differential approximants} (DAs) is straightforward computationally, and only involves solving a linear system of equations. Several such DAs are constructed by varying the degrees of the polynomials $Q_{k}(z)$ and $P(z),$ while still using most, or all of the known series coefficients. The singularities are given by the zeros $z_i, \,\, i=1, \ldots , N_M$ of $Q_M(z),$ where $N_M$ is the degree of $Q_M(z).$   We take as the dominant singularity that which is both closest to the origin and common to all (or almost all) the DAs.
Critical exponents $\theta_i$ follow from the indicial equation of the DA. For the simplest (and most frequent) situation where there is a single root of $Q_M(z)$ at $z_i,$  
$$ \theta_i=1-M+\frac{Q_{M-1}(z_i)}{z_iQ_M'(z_i)}. $$ Slightly more complicated expressions are known for the cases of double, triple etc. roots.
Further details of both methods can be found in \cite{G89}.

\subsection{Quarter-plane loops}
We first studied quarter-plane loops, introduced by Albert and Bousquet-M\'elou in \cite{AB15} and described in the previous sections. We generated 500 terms in the ogf from the recurrence relation given in the next sub-section. 

Using the method of differential approximants, in particular third-order approximants, we were able to estimate the critical point to an accuracy of between 6 and 20 significant digits, depending on the value of the parameter $a.$ Based on these numerical results, we conjectured the $a$-dependence of the radius of convergence, which is given as Conjecture 11 in \cite{AB15}, and is stated here immediately below Conjecture \ref{conjas}. For $a=-1$ and $a=1$ the ogf is holonomic, and we found the defining differential equation explicitly. That is to say, in those cases the differential approximants were precisely the defining odes.
It is for these two values of $a$ that 15 digit accuracy in the radius of convergence was obtained. For other values of $a,$ the precision of our estimates was lower, which incidentally is good heuristic evidence that the underlying generating function is non-holonomic.

We also confirmed the variation of the critical exponent with the parameter $a,$ as reported in \cite{AB15}. As a point of clarification, the exponents we refer to relate to the generating function, not the coefficients. For example, at $a=0,$ $u_c=(6+4\sqrt{2})^{-1},$ and we conjecture $$Q(u,0) = q_0+q_1(1-u/u_c)^{\theta}+o((1-u/u_c)^{\theta}),$$ whereas $[u^n]Q(u,0) \sim const \cdot u_c^{-n} \cdot n^{-1-\theta},$ where $\theta = \frac{\pi}{\arccos (\sqrt{2}-1)}$ and $q_0$ and $q_1$ are constants. Similarly, we find $$Q\left (u,-\frac{1}{2}\right ) = {\tilde q}_0+{\tilde q}_1(1-u/u_c)^{3/4}+o((1-u/u_c)^{3/4}),$$ where ${\tilde q}_0$ and ${\tilde q}_1$ are constants, and $u_c = 1/9,$ so that  $[u^n]Q(u,-1/2) \sim const \cdot u_c^{-n} \cdot n^{-7/4}.$ 

While we were unable to find an exact expression for the critical exponent for all values of $a,$ we estimated the exponent values for several values of $a \ge -1/2,$ and found that the exponent appears to be discontinuous at $a=-1/2,$ which is also the case for half-plane and full-plane walks. In particular, we found that the exponent increased monotonically with $a$ for $a > -1/2.$ However it appears that the exponent when $\lim_{a\to -1/2^+}$  is $1,$ but {\em at} $a=-1/2$ it is $3/4.$ Such monotonic, continuously varying exponents would preclude holonomic generating functions, except at isolated values of $a.$ Subsequently, in as yet unpublished work, Kilian Raschel told us of his conjectured result for the exponent, which is
$$\frac{\pi}{\arccos\left ( \frac{a-1}{a+1+\sqrt{2+2a}} \right )} \,\,\, {\rm for}\,\,\, a \ge 0.$$ Raschel's conjecture agrees with our numerical results for an even broader range,  notably $a > -1/2.$ As mentioned, at $a=-1/2$ our series analysis gives $0.750 \pm 0.002,$ from which we conjecture the exact value of $3/4.$ So not only do we observe the unusual phenomenon of a critical exponent varying steadily with a parameter, but also the phenomenon of a jump-discontinuity at a particular value of $a,$ in this case at $a=-1/2.$

\subsection{Deques and two stacks in parallel.}
Recall that $D(t)$ is the ogf of the number of permutations  sortable by a deque, and $P(t)$ is the corresponding ogf for the number of permutations sortable by two stacks in parallel (tsips). Then, as shown above in Theorem \ref{bigthm},

\begin{equation} \label{EP1}
P(t)=\frac{(D(t)-1)(D(t)-t-1)}{2t(D(t)-1- t  D(t))},
\end{equation}
and equivalently that
\begin{equation} \label{EP2}
D(t)=\frac{t}{2}+1+tP(t)-t^2 P(t)- \frac{t}{2}\sqrt{1-4P(t)+4P(t)^2-8tP(t)^2+4t^2P(t)^2-4tP(t) }.
\end{equation}

The series start: 
$$P(t)=1+t+2t^2+6t^3+23t^4+103t^5+513t^6+2760t^7+15741t^8+\cdots \,\,\, {\rm tsips},$$
$$D(t)=1+t+2t^2+6t^3+24t^4+116t^5+634t^6+3762t^7+23638t^8+\cdots \,\,\, {\rm deques}.$$\\
 We generated 500 terms in the tsip series from the functional equations in \cite{AB15} using the following method:

For integers $n,k,x,y$, let $s(n,k,x,y)$ denote the number of $n$ step quarter plane walks with $k$ ES or NW corners which start at $(0,0)$ and end at $(x,y)$, so that
\[Q(a,u)=\sum_{n=0}^{\infty} \sum_{k=0}^{\infty}s(n,k,0,0)a^{k}u^{n}.\]
We calculated the values $s(n,k,x,y)$ for $n\leq500$ via the recurrence relation
\begin{align*}s(n,k,x,y)=&s(n-1,k,x-1,y)+s(n-1,k,x,y-1)\\
+&s(n-1,k,x+1,y)+s(n-1,k,x,y+1)\\
+&s(n-2,k-1,x+1,y-1)-s(n-2,k,x+1,y-1)\\
+&s(n-2,k-1,x-1,y+1)-s(n-2,k,x-1,y+1).\end{align*}

Now that we have calculated coefficients of $Q(a,u)$, we can expand 
\[F(p,t)=Q\left(-p,\frac{t}{(1-p)^{2}}\right)-2p+1\] as a series in $p$ and $t$. Next we calculated the first 500 terms of the series $p(t)$ which sends $F(p(t),t)$ to 0. Note that $p$ is called $S^{\bullet}$ in \cite{AB15}. Finally we calculated the first 500 terms of $P(t)$, using the equation
\[P(t)=\frac{1}{1-p(t)}.\]
Calculating the series $P(t)$ from the terms in $Q(a,u)$ is very fast, so most of the time in this algorithm was spent calculating the coefficients of $Q$.

Using the tsip series, along with Theorem \ref{bigthm}, we then generated the first 500 terms in the deque series. 
We subjected these two long series to differential approximant \cite{G89} analysis.

For tsips, 6th order DAs show the dominant singularity to be at $t_c=0.1207524975763(2)$ with an exponent $1.47309(3),$ with another exponent with the value $1.94652(2)$ at $t_c=0.1207524975763(1).$ A third exponent at the same place with the value $4.72(4)$ is also suggested. The quoted errors reflect only the scatter in individual approximant estimates, and because of the presence of confluent singularities should be multiplied by a factor of 10 at least to be on the safe side.

With 8th order DAs, we find the dominant singularity to be at $t_c=0.1207524975763(2)$ with an exponent $1.47309(4),$ and with another exponent $1.94652(2)$ at $t_c=0.1207524975764(8).$ A third exponent at the same place with the value $4.72(4)$ is also suggested.

For the deque series, the unbiased analysis showed the dominant singularity to be at $z_c =0.12075249773(4),$ with exponents values of $0.507(4), \,\, 0.970(2),$ and $1.414(1).$ The uncertainties quoted just reflect the variability in the estimates across many DAs, and the lack of overlap between $z_c$ estimates for deques and tsips beyond the $10$th significant digit suggests that they are too optimistic. Nevertheless, 10-digit agreement gives considerable credence to the conjecture that they are indeed equal, and we assume this for the subsequent analysis. The dominant exponent is very close to $1/2$ exactly, which provides numerical support for the conjectured square-root singularity obtained in the previous section.

Another way to study the exponents for these two problems is by the ratio method. We write $[t^n]P(t) \sim const \cdot \mu^n \cdot n^{g_p}$ and $[t^n]D(t) \sim const \cdot \mu^n \cdot n^{g_d}.$ Then from the Hadamard (coefficient-by-coefficient) quotient, $$q_n=\frac{[t^n]P(t)}{[t^n]D(t)} \sim const \cdot n^\theta,$$ where $\theta = g_p-g_d.$ Then the ratios of successive coefficients $r_n$ behave as
$$r_n=\frac{q_n}{q_{n-1}} \sim  1+\frac{\theta}{n} + {\rm o}(1/n).$$ We can estimate the exponent $\theta$ by extrapolating a sequence of estimators $\{\theta_n\},$ defined by $n\cdot (r_n-1) \sim \theta_n + {\rm o}(1).$ The result is shown in Figure \ref{fig:fig1}, in which $\theta_n$ is shown extrapolated against $1/n.$ While it is difficult to extrapolate this curve, a limit in the range [-0.975-- -0.972] looks plausible. 
\begin{figure}[htbp]
   \centering
   \includegraphics[width=3.3in]{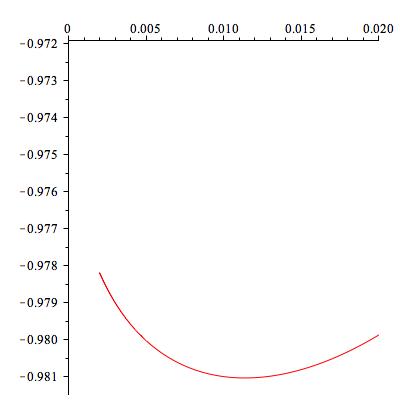} 
   \caption{Exponent estimates $\theta_n$ plotted against $1/n$. Note limit appears to be around -0.974.}
   \label{fig:fig1}
\end{figure}

We also analysed the deque series by the ratio method. From eqn (\ref{g2est}), estimators of the deque exponent $g_d$ can be found,
and these are shown extrapolated against $1/n$ in Fig \ref{fig:gn1}. The plot is quite linear and is clearly going to a value close to $-1.5.$
\begin{figure}[htbp]
   \centering
   \includegraphics[width=3.3in]{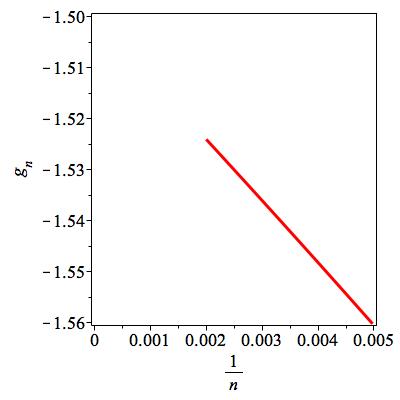} 
   \caption{Exponent estimates $g_n$ plotted against $1/n$ for deques. The limit appears to be $g \approx -1.5.$}
   \label{fig:gn1}
\end{figure}

We can refine this by calculating the exponent $g$ more precisely from the local gradient of the previous curve. This is $h_n=(g_n-g_{n-1})n(1-n),$ where $g_n$ is the $n^{th}$ estimator of $g_d.$ Then a straight line with this gradient will meet the ordinate at $g_n-\frac{h_n}{n},$ which should be a refined estimator of the exponent $g.$ This plot is shown in Fig \ref{fig:gn2}. That the curve appears to be going slightly below -1.5 is, we believe, of no consequence. We believe that if we had several thousand terms we would see this curve pass through a minimum, and increase to -1.5 exactly.
\begin{figure}[htbp]
   \centering
   \includegraphics[width=3.3in]{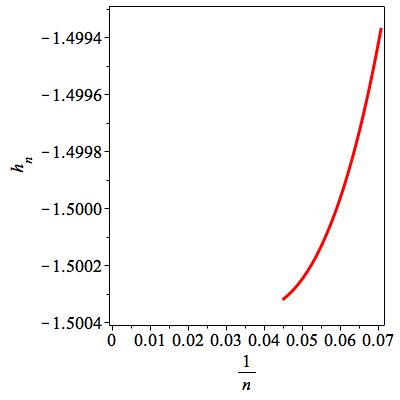} 
   \caption{Extrapolated exponent estimates $h_n$ plotted against $1/n$ for deques. The limit appears to be around $g \approx -1.500.$}
   \label{fig:gn2}
\end{figure}

Accepting the conjectured square-root singularity of the deque generating function,  numerical results thus far suggest that
\begin{equation} \label{Dz}
D(t) = D(t_c)+D_1(t_c)\sqrt{1-t/t_c}+D_2(t_c)\cdot(1-t/t_c)^{1/2+\Delta} + \cdots
\end{equation}
where $0 < \Delta \approx 0.47 < 1/2,$ as this gives the value $0.97$ observed as the sub-dominant exponent for deques.
Similarly, it appears that
\begin{equation} \label{Sz}
P(t) = P(t_c)-t_cP'(t_c)(1-t/t_c)+P_1(t_c)(1-t/t_c)^{1+\Delta}+{\rm o}((1-t/t_c)^{1+\Delta}).
\end{equation}

To see how this is consistent with (\ref{EP1}) and (\ref{EP2}), write (\ref{Dz}) as $$D(t)=D(t_c)+D_1(t_c)(1-t/t_c)^{1/2}+D_2(t_c)(1-t/t_c)^\beta + \cdots,$$
where $\beta=1/2+\Delta.$ 
Substitute into (\ref{EP1}). This gives an expression for $P(t)$ which includes terms of  $O(\sqrt{1-t/t_c})$ and $O((1-t/t_c)^{\beta}).$ Then a little algebra shows that the coefficients of both these terms  vanish if
\begin{equation}\label{eq:d0}
D(t_c)=\frac{1+t_c^{3/2}}{1-t_c}.
\end{equation}
Remarkably, this is the case, as follows from the results of  (\ref{consteqn}).

More precisely, in the proof of (\ref{mediumthm}) we have shown that
 $$ P(t_c)=\frac{1}{2(1-\sqrt{t_c})^2}$$ and$$2D(t_c)=2+t_c+2P(t_c)(t_c-t_c^2),$$ which can readily be shown to give (\ref{eq:d0}).

From eqn (\ref{Sz}) and the proof of (\ref{mediumthm}), we have
\begin{equation}\label{eq:d1}
D_1(t_c)=-2^{3/4}\cdot t_c^{3/2}\sqrt{P(t_c)^{3/2}+\sqrt{P(t_c)\cdot t_c}(1-\sqrt{t_c})\cdot P'(t_c)}.
\end{equation}

We also estimated the value of $P'(t_c)$ numerically from Pad\'e approximants to the series for $P(t)-P(t_c),$ evaluated at $t=t_c.$
Numerical estimates of $P'(t_c)$ give $D_1(t_c) \approx -0.0540$ from  (\ref{eq:d1}), which is precisely equal to direct estimates of $d_1(t_c)$ obtained from our numerical analysis of the deque series. There we formed the series for $$\frac{D(t)-D(t_c)}{\sqrt{1-t/t_c}}$$ and evaluated the approximants at $t=t_c.$ 
Furthermore, using this amplitude value $D_1(t_c),$ and subtracting $D(t_c)+D_1(t_c)\sqrt{1-t/t_c}$ from the deque series $D(t)$ gives a remainder series that behaves as $const \cdot (1-t/t_c)^\theta,$ where $\theta \approx 0.973.$ So both ratio and differential approximant analyses clearly identify this confluent exponent.

Similarly, we analysed the series for two stacks in parallel by the ratio method. The exponent estimators $g_n,$ given by eqn (\ref{gest}), are plotted against $1/n$ in Fig.  \ref{fig:2gn1}.
The plot is visually quite straight and is clearly going to a limit of about -2.475, as shown in Fig \ref{fig:2gn1}. 
\begin{figure}[htbp]
   \centering
   \includegraphics[width=3.3in]{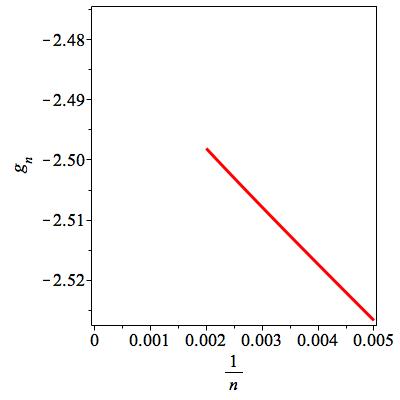} 
   \caption{Exponent estimates $g_n$ plotted against $1/n$ for two stacks in parallel. The limit appears to be about $g \approx -2.475.$}
   \label{fig:2gn1}
\end{figure}

As with the deque series, we can refine this exponent estimate by calculating the exponent $g$ more precisely from the local gradient of the previous curve. The refined estimate of the exponent $g$ is shown in  Fig \ref{fig:2gn2}, and from that plot a limit around -2.474 seems quite plausible.
\begin{figure}[htbp]
   \centering
   \includegraphics[width=3.3in]{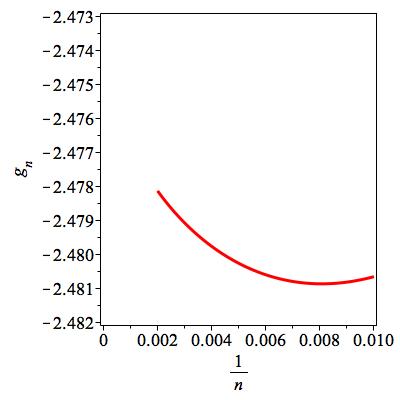} 
   \caption{Extrapolated exponent estimates $g_n$ plotted against $1/n$ for two stacks in parallel. The limit appears to be around $g \approx -2.474.$}
   \label{fig:2gn2}
\end{figure}

So the ratio plots support the conclusion that the exponent for the deque generating function is very close to $0.5,$ corresponding to a square-root singularity, as conditionally proved in Theorem \ref{mediumthm}.
The first confluent exponent for deques has the approximate value $0.973,$ and these two exponents add to give the observed value $1.473$ for the exponent of the ogf for tsips, as can be seen from  (\ref{EP1}). A direct ratio analysis of the ogf for tsips also gives an exponent value around $1.472-1.474.$

Finally, we calculate the amplitudes, or pre-multiplying constants $\kappa_d$ and $\kappa_p,$ where
$$[t^n]D(t) =d_n \sim \kappa_d \cdot t_c^{-n} \cdot n^{-3/2}, \,\,\, {\rm and} \,\,\, [t^n]P(z)=p_n \sim \kappa_p \cdot t_c^{-n} \cdot n^{-2.473}.$$ We do this by forming the sequences $d_n \cdot t_c^n \cdot n^{3/2} \sim \kappa_d + {\rm o}(1)$
and $p_n \cdot t_c^n \cdot n^{2.473} \sim \kappa_p + {\rm o}(1),$ and extrapolating these against $1/n.$ In this way we estimate $\kappa_d = 0.01524 \pm 0.0005,$ and $\kappa_p=0.08025 \pm 0.0010.$

Moving now from the arena of careful numerical work to that of wild speculation, in some unpublished work we have studied the behaviour of 421-3 pattern-avoiding permutations. We identified the critical exponent numerically, as $$\frac{2}{3} \left ( 1+ \frac{2\pi}{3\sqrt{3}} \right ) = 1.472799717437\ldots .$$ We were struck by the fact that this is tantalisingly close to the observed exponent of two stacks in parallel, though there is no obvious reason that the two problems should be connected, except that they both involve pattern-avoiding permutations.

\section{Conclusion}
For quarter-plane loops we have provided numerical support for the variation of the critical point with corner-parameter $a,$ as conjectured in \cite{AB15}. We also estimated the value of the critical exponent for a variety of values of $a \ge -1/2.$ Our numerical exponent values agree with the conjectured formula provided, in private correspondence, by Kilian Raschel. There appears to be an exponent discontinuity at $a=-1/2.$

Our principal result is Theorem \ref{bigthm}, giving the solution of the deque generating function in terms of that for tsips. Other conclusions are subject to the validity of certain conjectures. These include the result that the critical point for both the deque and tsip generating functions are equal. We also provide compelling numerical evidence for this, finding estimates that agree to 10 significant digits. Subject to these conjectures, we prove in Theorem \ref{mediumthm} that the deque generating function has a square-root singularity.

From the solutions of the deque and tsip generating functions, we produced 500 terms of the generating functions, and subjected these to careful numerical analysis. The asymptotic form of the generating functions was found to be, for deques,
\begin{equation} \label{Dzc}
D(t) \sim D(t_c)+D_1(t_c)\sqrt{1-t/t_c}+D_2(t_c)\cdot(1-t/t_c)^{1/2+\Delta} + \cdots
\end{equation}
where $ \Delta \approx 0.473,$ $t_c=0.1207524977,$ $d_0(t_c)=\frac{1+t_c^{3/2}}{1-t_c}\approx 1.185059767,$ and  $d_1(t_c) \approx -0.0543.$ For tsips we found
\begin{equation} \label{Szc}
P(t) \sim P(t_c)+-t_c P'(t_c)(1-t/t_c)+P_1(t_c)(1-t/t_c)^{1+\Delta}+P_2(t_c)(1-t/t_c)^{1+2\Delta}+ \cdots,
\end{equation}
where $P(t_c)=\frac{1}{2(1-\sqrt{t_c})^2} \approx 1.174361446,$   $P_1(t_c) \approx 0.1940,$ and $\Delta \approx 0.473.$
At the coefficient level we have $$[t^n]D(z) \approx 0.01524 \cdot t_c^{-n} \cdot n^{-3/2},$$ and
$$[t^n]P(t) \approx 0.08025 \cdot t_c^{-n} \cdot n^{-2.473}.$$

Given the simplicity of the relationship (\ref{SinRa}) between the two  generating functions $P(t)$ and $D(t)$ characterising tsips and deques respectively, there could be a much simpler proof than the one we have constructed.  It also remains a source of some frustration that we cannot establish the asymptotics rigorously, nor even unequivocally prove that the critical points of deques and tsips are identical, but hopefully our results will stimulate work in this direction.
\section*{Acknowledgments}
AJG would like to thank Mireille Bousquet-M\'elou for bringing her work with Michael Albert to his attention,  for subsequent discussions, and a careful reading of the first version of this article. We would also like to thank Kilian Raschel for telling us of his exponent conjecture, and permitting us to publish it. We are grateful to Jay Pantone for a careful reading of the manuscript, and for several corrections and improvements.  AE-P would like to acknowledge the support of the ARC Centre of Excellence for Mathematics and Statistics of Complex Systems (MASCOS).


\begin{thebibliography}{99}
\bibitem{AAL10} M. Albert, M. Atkinson and S. Linton, Permutations generated by stacks and deques, {\em Ann. Comb.} {\bf 14} (2010), 3-16.

\bibitem{AB15} M. Albert and M. Bousquet-M\'elou, Permutations sortable by two stacks in parallel and quarter-plane walks, {\em Europ. J. Comb.} {\bf 43} (2015), 131-164.

\bibitem{DD} D. Denton and P. Doyle, oeis.org/A182216/b182216.txt

\bibitem{EG15} A. Elvey Price and A. J. Guttmann, A numerical study of permutations sortable by two stacks in series. arXiv 1510.08663

\bibitem{EI71} S. Even and A. Itai, Queues, stacks and graphs, in: Theory of Machines and Computations (Proc. Internat. Sympos., Technion, Haifa, 1971, Academic Press, New York,  (1971) pp\, 71-86.

\bibitem{G89} A. J. Guttmann, Phase Transitions and Critical Phenomena, Vol 13, Eds. C. Domb an J. L. Lebowitz, Academic, London and New York, (1989).

\bibitem{K68} D.~E.~Knuth, {\em The Art of Computer Programming} Vol. 1: Fundamental Algorithms, Addison-Wesley Series in Computer Science and Information Processing, London, (1968).

\bibitem{MM} P. A. MacMahon, {\em Combinatorial Analysis} Vols I and II originally published in 1915 and 1916. Republished as one volume in 1960 by Chelsea Publishing Co, and in 2004 by Dover NY,
[36, sec 96-97].

\bibitem{P73} V.~R.~Pratt, Computing permutations with double-ended queues, in STO '73: Proceedings of the Fifth Annual ACM Symposium on Theory of Computing, ACM Press, New York, NY, USA (1973) pp.\, 268-277.

\bibitem{T72} R. Tarjan, Sorting using networks of queues and stacks,{\em  J. Assoc. Comput. Mach.} {\bf 19} (1972), 341-346.

\end{thebibliography}
\end{document}